\patchcmd\Gread@eps{\@inputcheck#1 }{\@inputcheck"#1"\relax}{}{}
\newtheorem{thm}{Theorem}[section]
\newtheorem{lem}[thm]{Lemma}
\newtheorem{prop}[thm]{Proposition}
\theoremstyle{definition}
\newtheorem{defi}[thm]{Definition}
\newtheorem{rem}[thm]{Remark}
\newtheorem{exa}[thm]{Example}
\newcommand{\Mod}{\mathrm{Mod}}
\newcommand{\Ar}{\mathrm{Ar}}
\newcommand{\ul}[1]{\underline{#1}}
\newcommand{\blabla}[1]{\quad\mbox{#1}\quad}
\newcommand{\co}{\colon}
\newcommand{\id}{\mathrm{id}}
\newcommand{\kk}{\Bbbk}
\newcommand{\Coalg}{\mathrm{Coalg}}
\newcommand{\C}{\mathcal{C}}
\newcommand{\B}{\mathcal{B}}
\newcommand{\D}{\mathcal{D}}
\newcommand{\A}{\mathcal{A}}
\newcommand{\K}{\mathcal{K}}
\newcommand{\U}{\mathcal{U}}
\newcommand{\V}{\mathcal{V}}
\newcommand{\Z}{\mathcal{Z}}
\newcommand{\M}{\mathcal{M}}
\newcommand{\re}{\textrm{r}}
\newcommand{\core}{\textrm{cor}}
\newcommand{\su}{\textrm{sum}}
\renewcommand{\lim}{\mathop\mathrm{lim}\limits}
\newcommand{\coeq}{\mathrm coeq}
\newcommand{\eq}{\mathrm eq}
\newcommand{\Rep}{\mathrm{Rep}}
\newcommand{\HRep}{\mathrm{HRep}}
\newcommand{\iso}{\stackrel{\sim}{\longrightarrow}}
\newcommand{\HM}{\mathcal{H}}
\newcommand{\Rt}{$\mathrm{R}$\nobreakdash-\hspace{0pt}}
\newcommand{\un}{\mathbb{1}}
\newcommand{\Ob}{\mathrm{Ob}}
\newcommand{\Aut}{\mathrm{Aut}}
\newcommand{\End}{\mathrm{End}}
\newcommand{\Hom}{\mathrm{Hom}}
\newcommand{\EndFun}{{\textsc{End}}}
\newcommand{\Fun}{{\textsc{Fun}}}
\newcommand{\BimodFun}{{\textsc{Bimod}}}
\newcommand{\coFun}{{\textsc{Fun}^{\textsc{co}}}}
\newcommand{\coBimodFun}{{\textsc{Bimod}^{\textsc{co}}}}
\newcommand{\coFunr}{{{\textsc{Fun}_r}^{\!\!\textsc{co}}}}
\newcommand{\coBimodFunr}{{{\textsc{Bimod}_r}^{\!\!\textsc{co}}}}
\newcommand{\vect}{\mathrm{vect}}
\newcommand{\Poly}{{\textsc{Poly}}}
\newcommand{\coPoly}{{\textsc{Poly}^{\textsc{co}}}}
\newcommand{\coinv}{\mathrm{co}}
\newcommand{\rdual}[1]{{#1}^\vee}
\newcommand{\adjunct}[2]{\!\!\raisebox{.6ex}{\xymatrix{\ar@/^.4pc/[r]^{#1} & \ar@/^.4pc/[l]^{#2}}}\!\!}
\providecommand{\bysame}{\leavevmode\hbox to3em{\hrulefill}\thinspace}
\newcommand{\eps}{\varepsilon}
\newcommand{\hT}{{\widehat{T}}}
\newcommand{\dar}[2]{\ar@<2pt>[r]^-{#1}\ar@<-2pt>[r]_-{#2}}
\newcommand{\labelto}[1]{\xrightarrow{#1}}
\newcommand{\tT}{\widetilde{T}}
\begin{document}
\title{Hopf polyads}
\author[A. Brugui\`eres]{Alan Brugui\`eres}
\email{alain.bruguieres@umontpellier.fr}
\subjclass[2000]{18C20,16T05,18D10,18D15}

\date{\today}

\begin{abstract}
We introduce Hopf polyads in order to unify Hopf monads and group actions on monoidal categories. A polyad is a lax functor from a small category (its source) to the bicategory of categories, and a Hopf polyad is a comonoidal polyad whose fusion operators are invertible. The main result states that the lift of a Hopf polyad is a strong (co)monoidal action-type polyad (or strong monoidal pseudofunctor). The lift of a polyad is a new polyad having simpler structure but the same category of modules.
We show that, under certain assumptions, a Hopf polyad can be `wrapped up' into a Hopf monad. This generalizes the fact that finite group actions on tensor categories can be seen as Hopf monads. Hopf categories in the sense of Batista, Caenepeel and Vercruysse can be viewed as Hopf polyads in a braided setting via the notion of Hopf polyalgebras. As a special case of the main theorem, we generalize a description of the center of graded fusion category due to Turaev and Virelizier to tensor categories: if $\C$ is a $G$-graded (locally bounded) tensor category, then $G$ acts on the relative center of $\C$ with respect to the degree one part $\C_1$, and the equivariantization of this action is the center of $\C$.
\end{abstract}
\maketitle

\setcounter{tocdepth}{1} \tableofcontents

\newcommand{\RcoEq}{$\mathrm{E}$-}

\newcommand{\MOD}[2]{\mathrm{mod}_{#1}{#2}}

\section*{Introduction}

Hopf monads were introduced in \cite{BV2} as a generalization of Hopf algebras to the context of monoidal categories with duals (also called rigid, or autonomous categories), and subsequently extended in \cite{BLV} to arbitrary monoidal categories.
The initial motivation was to describe the center $\Z(\C)$ of a rigid category $\C$ in algebraic terms.
It turns out that, under reasonable assumptions,
$\Z(\C)$ is isomorphic as a braided category to the category of modules over a quasitriangular Hopf monad $Z_\C$ on $\C$. The monadicity part of this result was proved independently by Day and Street \cite{DayStreet}.
This description of the center, and the theory of the double of a Hopf monad developped in \cite{BV3}, were used by Turaev and Virelizier to compare the two main families of TQFTs for $3$-manifolds, namely the Turaev-Viro and the Reshetikin-Turaev TQFTs \cite{TV0}.

Hopf monads have also been applied in \cite{BN1} and \cite{BN2} to the study of tensor categories. In particular, the action of a finite group on a tensor category, and the corresponding equivariantization, can be interpreted in terms of Hopf monads. More precisely, let $G$ be a finite group acting on a tensor category $\C$, each $g \in G$ acting by a tensor autoequivalence $\rho_g$. Then $T = \bigoplus_{g \in G} \rho_g$ is a Hopf monad on $\C$, whose category of modules is the equivariantization of $G$ under the group action.
It should be noted, however, that group actions and equivariantizations cannot be interpreted in monadic terms in general. Indeed, monads arise in the presence of an adjunction, and in general, the forgetful functor of an equivariantization does not have a left adjoint, so that there is no associated monad.

In this article, we introduce the formalism of Hopf polyads in order to unify Hopf monads and group actions on monoidal categories. A polyad\footnote{After coming up with the name polyad, the author found out that a similar notion had been introduced by J. Benabou under the name \emph{polyades}.} $T$ is a lax functor from a small category $D$ (the \emph{source} of the polyad) to the bicategory of categories. It associates to every object $i$ of $D$ a category $\C_i$ and to every morphism $j \labelto{a} i$ of $D$, a functor $T_a :  \C_j \to \C_i$,
and is equipped with products $\mu_{a_b} : T_a T_b \to T_{ab}$ indexed by composable pairs $(a,b)$ of morphisms in $D$ and units $\eta_i : \id_{\C_i} \to T_{\id_i} = T_i$ indexed by objects $i$ of $D$, subject to associativity and unity axioms. In particular the endofunctor $T_i$ is a monad on $\C_i$ for each $i$. When $\mu$ and $\eta$ are isomorphisms, we say that $T$ is of action type (such a data is traditionally called a pseudofunctor). A monad is a special case of a polyad, obtained when $D$ is the final category $*$. On the other hand, if $G$ is a group, denoting by $\underline{G}$ the
category having one object $*$, with $\End_{\underline{G}}(*)=G$, then an action-type polyad with source $\underline{G}$ is nothing but an action of the group $G$ on a category $\C = \C_*$.

One defines modules over a polyad, which generalize modules over a monad. However the forgetful functor of the category of modules need not have a left adjoint - that is the main difference between polyads and monads.

The lift construction plays a special role. Given a polyad $T$, with source $D$, its lift $\tT$ is a new polyad with source $D$ canonically associated with $T$, which sends an object $i$
of $D$ to the category $\widetilde{\C}_i$ of $T_i$-modules. This construction can be carried out  when $T$ is \re-exact, that is, the categories $\C_i$ have reflexive coequalizers which are preserved by the functors $T_a$. The polyad $T$ and its lift $\tT$ have the same category of modules. The lift construction generalizes the classical algebraic fact that a bimodule can be lifted to a functor between categories of modules, which still holds in the monadic setting (Theorems~\ref{thm-eq-fun-bimod} and~\ref{thm-eq-comon-bimod}).

A comonoidal polyad is a polyad for which the categories $\C_i$ are monoidal, the functors $T_a$ are (lax) comonoidal, and the transformations $\mu_{a,b}$ and $\eta_i$ are comonoidal.
Given a comonoidal polyad, one may define a left and a right fusion operator $H^l$ and $H^r$, which generalize those of a comonoidal monad.
A Hopf polyad is a comonoidal polyad whose fusion operators are invertible. In particular, a Hopf monad is just a Hopf polyad with source $*$.

The central result of this article is the fundamental theorem of Hopf polyads (Theorem~\ref{thm-fond}). It asserts that the lift of a \re-exact transitive Hopf polyad is
a strong comonoidal polyad of action type. The transitivity condition is a technical way of saying that the functors $T_a$ are substantial enouggh (matter-of-factly : each endofunctor $T_a\un \otimes ?$ is conservative).
In a sense, this theorem means that the notion of a Hopf polyad is a conceptually economical way of unifying Hopf monads and strong (co)monoidal group actions and pseudofunctors.

We compare Hopf polyads and Hopf monads, and show that, under certain exactness conditions involving sums, a comonoidal polyad $T$ can be `wrapped up' into a comonoidal monad $\hT$ having same category of modules, and under stricter conditions (notably, that $D$ be a groupoid), $\hT$ is a Hopf monad if $T$ is a Hopf polyad. In the case
where $T$ is a a group action, $\hT$ is the associated Hopf monad constructed in \cite{BN1}.

We generalize Hopf modules to Hopf polyads in two ways, and give a decomposition theorem for both, derived from the analogue for Hopf monads given in \cite{BV2} and improved in \cite{BLV}.

As a source of examples, we show that Hopf categories, as defined by  Batista, Caenepeel and Vercruysse in \cite{BCV} can be viewed as special cases of Hopf polyads in a braided setting, via the notion of a
Hopf polyalgebra.

Lastly, returning to the initial motivation of understanding the center construction, we show that a result due to Turaev and Virelizier about the structure of the center of a graded fusion category can be understood as a special case of the fundamental theorem of Hopf polyads, and can be generalized to locally bounded graded tensor categories (Theorem~\ref{thm-center-graded}).
This result is based on the properties of a Hopf polyad $Z$ whose source is the category $\underline{G}$ of the group $G$, with, for $g \in G$:
$$
\left\{
\begin{array}{llll}
Z_g  : &\C &\to &\C,\\
&X &\mapsto &Z_g(X) = \int^{A \in \C_g} A \otimes X \otimes \rdual{A},\\
\end{array}
\right.$$
whose lift is an action of $G$ on the relative center $\Z_{\C_1}(\C)$. The center $\Z(\C)$ is the equivariantization of this action.

\subsection*{Organization of the text.}

In Section~\ref{sect-prelims}, we introduce notations and conventions, review standard facts about monoidal categories, monads, comonoidal functors, and outline the benefits of `\re-exactness conditions', that is, existence and preservation of reflexive equalizers. The most important contribution of this section is the notion of bimodule functors over monads, which are in a sense the main building block of the subsequent constructions. The key point is that, just as bimodules serve to encode right exact functors between categories of modules over rings, \re-exact bimodule functors serve to encode \re-exact functors between categories of modules over \re-exact monads (Theorem~\ref{thm-eq-fun-bimod}). The same holds for comonoidal \re-exact bimodule functors and comonoidal \re-exact functors between categories of modules over comonoidal \re-exact monads (Theorem~\ref{thm-eq-comon-bimod}).

In Section~\ref{sect-hopf-polyad}, we introduce the notions of a polyad, a comonoidal polyad and a Hopf polyad. We define the lift of a polyad, and state and prove the `fundamental theorem of Hopf polyads (Theorem~\ref{thm-fond}).

In Section~\ref{sect-fonct}, we define the $2$-categories of polyads and comonoidal polyads with source $D$, and the pull-back (source change). We study the special case of (transitive \re-exact) Hopf polyads whose source is a connected groupoid.

In Section~\ref{sect-quasitriangular}, we define \Rt matrices for comonoidal polyads and quasitriangular comonoidal polyads.

In Section~\ref{sect-polyads-monads}, we compare Hopf polyads and Hopf monads.

In Section~\ref{sect-hopfmodrep}, we generalize the notion of a Hopf module to Hopf polyads in two different ways, which leads to two decomposition theorems.

The subsequent sections are devoted to examples. Section~\ref{sect-hopf-cats} compares Hopf polyads and Hopf categories via the notion of Hopf polyalgebras. Hopf categories are special cases of Hopf polyalgebras, which in turn can be interpreted as an algebraic representation of so-called `representable Hopf polyads'.

Section~\ref{sect-center} introduces the Hopf polyad $Z$ associated with the center of a locally bounded $G$-graded tensor category $\C$, where $G$ is a group. The group $G$ operates on the relative center $\Z_{\C_1}(\C)$, the action being the lift of $Z$, and the center $\Z(\C)$ is monoidal equivalent to the equivariantization of this action.

\section{Preliminaries and notations}\label{sect-prelims}

\subsection{Categories, r-categories and r-exact functors}

Let $\C$ be a category. We denote by $\Ob(\C)$ the class of objects of $\C$ and by $\Hom_\C(X,Y)$ the set of morphisms in
$\C$ from an object $X$ to an object $Y$. The identity functor of~$\C$ is denoted by $1_\C$.

We will also use the following alternate notation: if $D$ is a small category, $D_0$ denotes the set of objects of $D$, $D_1$ the set of morphisms of $D$, and $D_2$ the set of composable pairs of $D$, that is,
pairs $(f,g)$ where $f, g \in D_1$ and the source of $f$ is the target of $g$.

We denote by $*$ the final category (with one object and one morphism). If $G$ is a group, we denote by $\ul{G}$ the category with one object $*$ whose endomorphisms are the elements of $G$. If $X$ is a set, we denote by $*_X$ the category whose set of objects is $X$, and having exactly one morphism in each Hom-set (which is equivalent to $*$ if $X \neq \emptyset$).
Lastly, we denote by $\Delta_1$ the category associated with the poset $\{0 \le 1\}$.


A pair of parallel morphisms $\xymatrix{X\dar{f}{g} &Y}$ in a category $\C$
is \emph{reflexive} 
 if $f$ and $g$ have a common section, 
 that is, if there exists a morphism $s \co Y \to X$ such that $fs=gs=\id_Y$.
A \emph{cofork} in $\C$ is a diagram
$$(\K) \quad\xymatrix{X \dar{f}{g} & Y \ar[r]^h &Z}$$
such that $hf = hg$. The cofork $(\K)$ is \emph{reflexive} if the pair $(f,g)$ is reflexive, and
it is \emph{exact} if $h$ is a coequalizer of the pair $(f,g)$.
A \emph{reflexive coequalizer} is a coequalizer of a reflexive pair.

On the other hand the cofork $(\K)$ is \emph{split} if there exist morphisms $\sigma : Z \to Y$, $\gamma : Y \to X$ such that $h\sigma = \id_Z$, $f\gamma = \sigma h$ and  $g\gamma = \id_Y$. Such a pair $(sigma,\gamma)$ is a \emph{splitting} of $(\K)$. A split cofork is exact, and indeed universally exact, as its image under any functor is again split, and therefore exact.

\begin{lem} A split cofork is exact.\end{lem}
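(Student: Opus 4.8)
The plan is to show directly that $h$ satisfies the universal property of the coequalizer of $(f,g)$, using the splitting data $(\sigma,\gamma)$ supplied by the definition of a split cofork. Recall that a split cofork comes equipped with morphisms $\sigma \co Z \to Y$ and $\gamma \co Y \to X$ satisfying $h\sigma = \id_Z$, $f\gamma = \sigma h$, and $g\gamma = \id_Y$. We already know $hf = hg$ from the cofork condition, so $h$ coequalizes the pair; the content is uniqueness and existence of factorizations.

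First I would verify existence: given any morphism $t \co Y \to W$ with $tf = tg$, I claim $t$ factors through $h$ via the candidate $\overline{t} := t\sigma \co Z \to W$. To check $\overline{t}\,h = t$, I compute $t\sigma h = t f\gamma = t g \gamma = t\,\id_Y = t$, where the first equality uses $\sigma h = f\gamma$, the second uses $tf = tg$, and the third uses $g\gamma = \id_Y$. Thus $\overline{t}$ is a factorization of $t$ through $h$.

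Next I would verify uniqueness: if $u \co Z \to W$ satisfies $uh = t$, then precomposing with $\sigma$ gives $u = u\,\id_Z = u h \sigma = t\sigma = \overline{t}$, where I use $h\sigma = \id_Z$. Hence any factorization must equal $\overline{t}$, so $h$ is indeed the coequalizer of $(f,g)$, i.e.\ the cofork is exact.

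Since this argument only manipulates the four equational constraints defining a split cofork and never appeals to any special feature of $\C$, there is no genuine obstacle here; the only thing to be careful about is bookkeeping the composition order and making sure each equality invokes the correct defining relation. (This is exactly why the paper remarks that the image of a split cofork under any functor is again split, hence exact: the factorization formula $\overline{t} = t\sigma$ is preserved, being expressed purely in terms of the splitting morphisms.)
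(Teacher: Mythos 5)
Your proof is correct and is essentially identical to the one in the paper: both define the factorization as $u\sigma$, verify it via $u\sigma h = uf\gamma = ug\gamma = u$, and obtain uniqueness from $h\sigma = \id_Z$. No differences worth noting.
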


\begin{proof} This is perfectly standard, but nonetheless instructive. Let $(\K)$ be a cofork as above, with splitting  $(\sigma,\gamma)$. Let $u : Z \to W$ be a morphism such that $uf = ug$.
If $u$ factorizes as  $vh$, then $v = vh\sigma = u\sigma$, so $v$ is unique. Now for $v = u\sigma$, $vh = u\sigma h = u f \gamma = u g \gamma = u$, which shows that $h$ is a coequalizer of $(f,g)$.
\end{proof}

Note that the image of a split cofork under any functor is again split, and therefore exact: split coforks are \emph{universally exact}.

\begin{defi}An \emph{\re-category} is a category admitting coequalizers of reflexive pairs.
A functor $F$ between \re-categories is \emph{\re-exact} if it preserves coequalizers of reflexive pairs.
\end{defi}

A functor $F: \C \to \D$ is \emph{conservative} if  any morphism $f$ of $\C$ such that $F(f)$ is an isomorphism of $\D$ is already an isomorphism of $\C$.

We will use the following well-known lemma.

\begin{lem} \label{lem-coforkex}
Let $F : \C \to \D$ be a conservative \re-exact functor between \re-categories. Let $(\K)$ be a reflexive cofork in $\C$ such that $F(\K)$ is exact. Then $(\K)$ is exact.
\end{lem}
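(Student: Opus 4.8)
The plan is to compare the given cofork with an honest reflexive coequalizer constructed inside $\C$, and then transport invertibility of the comparison morphism back from $\D$ to $\C$ using conservativity of $F$. Write $(\K)$ as $X \rightrightarrows Y \to Z$, with parallel arrows $f,g$, with $h \co Y \to Z$ satisfying $hf = hg$, and let $s \co Y \to X$ be a common section witnessing reflexivity of the pair $(f,g)$.

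First I would invoke the hypothesis that $\C$ is an \re-category to produce a coequalizer $q \co Y \to Q$ of the reflexive pair $(f,g)$. Since $hf = hg$, the universal property of $q$ yields a unique morphism $p \co Q \to Z$ with $pq = h$. Next I would apply $F$: because $F$ is \re-exact it preserves this reflexive coequalizer, so $F(q) \co F(Y) \to F(Q)$ is a coequalizer of the pair $(F(f), F(g))$ in $\D$. By hypothesis $F(h)$ is also a coequalizer of this same pair. Applying $F$ to $pq = h$ gives $F(p)\,F(q) = F(h)$, so $F(p)$ is exactly the canonical comparison morphism between two coequalizers of one and the same parallel pair; such a comparison is necessarily an isomorphism.

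Finally I would use that $F$ is conservative: since $F(p)$ is an isomorphism in $\D$, the morphism $p$ is an isomorphism in $\C$. Then $h = pq$ is the composite of a coequalizer with an isomorphism, hence is itself a coequalizer of $(f,g)$, which is precisely the assertion that $(\K)$ is exact. The only step requiring care is the claim that $F(p)$ is two-sided invertible, rather than merely that $F(h)$ factors through $F(q)$: establishing this demands the universal property of coequalizers in both directions (both $F(h)$ through $F(q)$ and $F(q)$ through $F(h)$, with the resulting inverse arrows shown to compose to identities by uniqueness). This is the crux of the argument; everything else is formal bookkeeping, and the reflexivity hypothesis enters only to guarantee the existence of $q$ and its preservation by $F$.
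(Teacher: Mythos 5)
Your proof is correct and is exactly the standard argument that the paper leaves implicit (it states Lemma~\ref{lem-coforkex} as ``well-known'' without supplying a proof): factor $h$ through the reflexive coequalizer $q$ of $(f,g)$ via a comparison $p$, use \re-exactness and the hypothesis to see that $F(p)$ compares two coequalizers of the same pair and is therefore invertible, and conclude by conservativity of $F$. All steps, including the two-sided invertibility of the comparison morphism that you flag as the crux, are carried out correctly.
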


\subsection{Monoidal categories}\label{sect-monocat}
Let $\C$ be a monoidal  category. Given an object $X$ of  $\C$, we denote by $X \otimes ?$ the endofunctor of $\C$ defined on objects by $Y \mapsto X \otimes Y$ and on morphisms by $f \mapsto X \otimes f=\id_X \otimes f$. The endofunctor $? \otimes X$ is defined similarly.


A monoidal \re-category is a monoidal category $\C$ admitting coequalizers of reflexive pairs, and whose tensor product $\otimes: \C \times \C \to \C$ is \re-exact. Note that this is equivalent to saying that for every object $X$ of $\C$, the endofunctors $X \otimes ?$ and $? \otimes X$ are \re-exact.

\begin{prop}\label{prop-faith}
Let $(C,\Delta,\eps)$ be a coalgebra in a monoidal \re-category. The following conditions are equivalent:

\begin{enumerate}[(i)]
\item the endofunctor $C \otimes ? : \C \to \C$ is conservative;
\item the endofunctor $? \otimes C : \C \to \C$ is conservative;
\item the cofork
$(\K) \quad \xymatrix{C \otimes C\dar{C \otimes \eps}{\eps \otimes C}&\C \ar[r]^{\eps}&\un}$
is exact.
\end{enumerate}

\end{prop}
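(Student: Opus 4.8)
The plan is to establish the two equivalences (i)$\Leftrightarrow$(iii) and (ii)$\Leftrightarrow$(iii) separately; since everything in sight is symmetric under exchanging left and right tensoring, it is enough to treat (i)$\Leftrightarrow$(iii) in detail and to obtain (ii)$\Leftrightarrow$(iii) by the mirror argument, replacing $C \otimes ?$ by $? \otimes C$. The basic remark, used throughout, is that $(\K)$ is a \emph{reflexive} cofork: the comultiplication $\Delta \co C \to C \otimes C$ is a common section of both $C \otimes \eps$ and $\eps \otimes C$, by the counit axioms $(C \otimes \eps)\Delta = \id_C = (\eps \otimes C)\Delta$. This reflexivity is exactly what makes Lemma~\ref{lem-coforkex} applicable below.

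For (i)$\Rightarrow$(iii) I would first observe that the cofork $C \otimes (\K)$---with parallel maps $f = C \otimes C \otimes \eps$ and $g = C \otimes \eps \otimes C$ from $C \otimes C \otimes C$ to $C \otimes C$ and coaugmentation $h = C \otimes \eps \co C \otimes C \to C$---is split, and hence exact. An explicit splitting is $\sigma = \Delta$ and $\gamma = \Delta \otimes C$: the three identities $h\sigma = \id$, $g\gamma = \id$ and $f\gamma = \sigma h$ follow from the counit axioms and bifunctoriality of $\otimes$, no use of coassociativity being needed. Now $C \otimes ?$ is \re-exact (as $\C$ is a monoidal \re-category) and conservative by hypothesis (i), the cofork $C \otimes (\K)$ is exact, and $(\K)$ is reflexive; Lemma~\ref{lem-coforkex} then yields that $(\K)$ is exact, which is (iii). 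The implication (ii)$\Rightarrow$(iii) is identical, using that $(\K) \otimes C$ is split with splitting $\sigma = \Delta$, $\gamma = C \otimes \Delta$ (the two parallel maps now playing interchanged roles, which is harmless for exactness).

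Conversely, assume (iii). For any object $X$ the functor $? \otimes X$ is \re-exact, so it carries the reflexive coequalizer $(\K)$ to a coequalizer; that is, $\eps \otimes X \co C \otimes X \to X$ is the coequalizer of the pair $(C \otimes \eps \otimes X, \eps \otimes C \otimes X)$. Given $f \co X \to Y$ with $C \otimes f$ invertible, naturality of $\eps \otimes ?$ and of the two parallel maps yields a morphism of coequalizer diagrams from $(\K)\otimes X$ to $(\K)\otimes Y$ with components $C \otimes C \otimes f$, $C \otimes f$ and $f$; the first two are isomorphisms (the second by hypothesis, the first since $C \otimes C \otimes f = C \otimes (C \otimes f)$), so they constitute an isomorphism of parallel-pair diagrams and therefore induce an isomorphism on coequalizers. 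As this induced map is precisely $f$, we conclude that $f$ is invertible and hence that $C \otimes ?$ is conservative, which is (i). The implication (iii)$\Rightarrow$(ii) is the mirror statement, obtained by applying $X \otimes ?$ to $(\K)$.

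I expect the only genuinely substantive points to be the verification that $\Delta$ supplies contracting splittings for $C \otimes (\K)$ and $(\K)\otimes C$ (so that Lemma~\ref{lem-coforkex} applies), and, in the converse direction, the passage from an isomorphism of reflexive coequalizer diagrams to an isomorphism of their coequalizers. Both hinge only on the counit axioms and on \re-exactness of $\otimes$, coassociativity of $\Delta$ being never used; so the main bookkeeping to watch is the handling of the unit constraints $C \otimes \un \cong C$ and $\un \otimes C \cong C$ that implicitly identify the coaugmentations.
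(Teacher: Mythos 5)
Your proposal is correct and follows essentially the same route as the paper: split the cofork $C\otimes(\K)$ via $\Delta$ and $\Delta\otimes C$ and apply Lemma~\ref{lem-coforkex} for (i)$\Rightarrow$(iii), then tensor the exact reflexive cofork $(\K)$ with $X$ and $Y$ and compare coequalizers for the converse, with (ii)$\Leftrightarrow$(iii) by left--right symmetry. Your explicit remark that $\Delta$ makes $(\K)$ reflexive (so that Lemma~\ref{lem-coforkex} applies) is a detail the paper leaves implicit, and your splitting $(\Delta,\Delta\otimes C)$ matches the computation the paper actually performs.
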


\begin{defi}\label{def-trans} A coalgebra in a monoidal r-category is \emph{transitive} if it satisfies the equivalent conditions of Proposition~\ref{prop-faith}
\end{defi}

\begin{proof}
By reason of symmetry, it is enough to prove the equivalence of (i) and (iii).

Observe first that the cofork $C \otimes (\K)$ is split, with splitting $(\Delta, C \otimes \Delta)$: indeed, $(C \otimes \eps)\Delta = \id_C$,
$(C\otimes C \otimes \eps)(\Delta \otimes C) =\Delta (C \otimes \eps)$ and $(C \otimes \eps \otimes C)(\Delta \otimes C) = \id_{C \otimes C}$.
Therefore, $C \otimes (\K)$ is exact.

In particular, if $C \otimes ?$ is conservative, then by Lemma~\ref{lem-coforkex} $(\K)$ is exact, so (i) $\implies$  (iii). Conversely, assume $(\K)$ is exact and let $f : X \to Y$ be a morphism such that $C \otimes f$ is an isomorphism. We have a commutative diagram:
$$\xymatrix{
(\K) \otimes X & C \otimes C \otimes X \ar[d]_{C \otimes C \otimes f} \dar{}{} & C \otimes X \ar[d]_{C \otimes f}\ar[r] & X\ar[d]^f \\
(\K) \otimes Y & C \otimes C \otimes Y \dar{}{} & C \otimes Y \ar[r] & Y
}$$
whose lines are exact coforks, and since $C \otimes f$ and $C \otimes C \otimes f$ are isomorphisms, so is $f$. Hence $C \otimes ?$ is conservative. Thus (iii) $\implies$ (i).
\end{proof}

%
%

\subsection{Comonoidal functors}\label{sect-comonofunctor}
Let $(\C,\otimes,\un)$ and $(\D, \otimes, \un)$ be two monoidal categories.
A \emph{comonoidal functor} (also called \emph{opmonoidal functor}) from $\C$ to
$\D$ is a triple $(F,F^2,F^0)$, where $F\co \C \to \D$ is a functor, $F^2\co F \otimes \to F\otimes F$ is a natural
transformation, and $F^0\co F(\un) \to \un$ is a morphism in $\D$, such that:
\begin{align*}
&(F X \otimes F^2(Y,Z)) F^2(X,Y \otimes Z)= (F^2(X,Y) \otimes FZ) F^2(X \otimes Y, Z) \,\text{ for $X,Y,Z $ in $\C$};\\
&(F X \otimes F^0) F^2(X,\un)=\id_{F(X)}=(F^0 \otimes FX) F^2(\un,X) \,\text{ for $X$ in $\C$}.
\end{align*}
A comonoidal functor $(F,F^2,F^0)$ is \emph{strong} (resp.\@ \emph{strict}) \emph{comonoidal} if $F^2$ and $F^0$ are
isomorphisms (resp.\@ identities). In that case, $(F,{F^2}^{-1},{F^0}^{-1})$ is a strong (resp. strict) monoidal functor.

A natural transformation $\varphi\co F \to G$ between comonoidal functors is \emph{comonoidal} if it satisfies:
\begin{equation*}
G^2(X,Y) \varphi_{X \otimes Y}= (\varphi_X \otimes \varphi_Y) F^2(X,Y)\quad \text{and} \quad G^0 \varphi_\un= F^0.
\end{equation*}

Notions of monoidal functor and monoidal natural transformation are dual to the notions of comonoidal functor and comonoidal natural transformation.
As noted above, strong monoidal and strong comonoidal functors are essentially the same thing.

\subsection{Relative center}
Let $\C$, $\D$ be be monoidal categories and $F : \C \to \D$ be a comonoidal functor. A \emph{half-braiding relative to $F$} is a pair $(d,\sigma)$, where $d$ is an object of $\D$ and
$\sigma$ is a natural transformation $d \otimes F \to F \otimes d$ satisfying:
\begin{eqnarray*}
&(F^2(c,c') \otimes d)\sigma_{c \otimes c'}&= (F(c) \otimes  \sigma_{c'})(\sigma_c \otimes F(c'))(d \otimes F^2(c,c')),\\
&(F^0 \otimes d)\sigma_{\un}& = \sigma_\un (d \otimes
F^0).
 \end{eqnarray*}
Half-braidings relative to $F$ form a category called the
\emph{center of $\D$ relative to $F$}  and denoted by $\Z_F(\D)$,
or $\Z_\C(\D)$ if the functor $F$ is clear from the context. It is
monoidal, with the tensor product defined by $$(d,\sigma) \otimes
(d',\sigma') = (d \otimes d',  (\sigma \otimes d')(d \otimes
\sigma')),$$ and the forgetful functor $\U: \Z_{F}(\D) \to \D$
is monoidal strict. Note that for $F = \id_\D$, $\Z_F(\D)$ is the center $\Z(\D)$ of $\D$.

Now assume $F$ is strong monoidal (in particular, it can be viewed as a comonoidal functor).
Then we have a strong monoidal functor $\tilde{F}: \Z(\C) \to \Z_{F}(\D)$, defined by $\tilde{F}(c,\sigma) = (F(c),\tilde{\sigma})$, where $\tilde{\sigma} = F(\sigma)$ up to the structure isomorphisms of $F$.

\subsection{Monads}\label{sect-monads}

Let $\C$ be a category. The category $\EndFun(\C)$ of endofunctors of $\C$ is strict monoidal, the tensor product being the composition of endofunctors, and the unit, the identity functor.
A \emph{monad on $\C$} is an algebra in $\End(\C)$, that is, a triple $(T,\mu,\eta)$, where $T$ is an endofunctor of $\C$ and $\mu : T^2 \to T$, $\eta : \id_\C \to T$ are natural transformations satisfying:
$$\mu(T\mu) = \mu(\mu T) \quad\text{and}\quad \mu(T\eta) = \id_T = \mu(\eta T).$$

A \emph{$T$-module in $\C$} (often called a $T$-algebra) is a pair $(M,\rho)$, where $M$ is an object of $\C$ and $\rho$ is an action of $T$ on $M$, that is, a morphism $\rho : TM \to M$
satisfying:
$$\rho T(\rho)= \rho \mu_M \blabla{and} \rho T(\eta_M) = \id_M.$$

Morphisms between $T$-modules $(M,\rho)$ and $(N,\rho')$ are morphisms $f : M \to N$ in $\C$ satisfying $f \rho = \rho'T(f)$; with these morphisms, $T$-modules form a category denoted
by $\C_T$.

The \emph{forgetful functor} $U_T: \C_T \to \C$ is conservative. It has a left adjoint, namely the \emph{free module functor}
$$
\left\{
\begin{array}{llll}
F_T  : &\C &\to &\C_T,\\
&X &\mapsto &(TX,\mu_X),\\
&f &\mapsto &T(f).
\end{array}
\right.$$
The adjunction morphisms are $\eta: \id_\C \to U_T F_T = T$ and $\eps: F_T T \to \id_{\C_T}$e, defined by $\eps_{(M,\rho)} = \rho$.

Conversely, let $F \vdash U$ be an adjunction, that is, a pair of functors $U : \D \to \C$, $F : \C \to \D$, with adjunction morphisms
$\eta : \id_C \to UF$ and $\eps: FU \to \id_\D$, satisfying:
$$(U \eps)(\eta U) = \id_U \blabla{and} (\eps F)(F \eta) = \id_F.$$
Then $T=UF$ is a monad on $\C$, with product $\mu = U\eps F$ and unit $\eta$, so that we have a new adjunction $F_T \vdash U_T$. The two adjunctions are related by the \emph{comparison functor}
$$
\left\{
\begin{array}{llll}
K  : &\D &\to &\C_T,\\
&Y &\mapsto &(UY,U \eps_Y),\\
&f &\mapsto & Uf.
\end{array}
\right.$$
The adjunction $F \vdash U$ is \emph{monadic} if $K$ is an equivalence.

\subsection{Monadicity and \re-exactness}\label{sect-monadicity}

Let $\C$, $D$ be two categories.

A functor $U : \D \to \C$ is \emph{monadic} if it has a left adjoint $F$ and the comparison functor of the adjunction $(F \vdash U)$ is an equivalence.

Beck's theorem characterizes monadic functors in general. It can be restated in simple terms under \re-exactness conditions:

\begin{lem} (Weak monadicity)

Let $\C$, $\D$ be \re-categories and let $U : \D \to \C$ be an \re-exact functor. The following assertions are equivalent:
\begin{enumerate}[(i)]
\item $U$ is monadic;
\item $U$ is conservative and has a left adjoint $F$.
\end{enumerate}
If such is the case, the monad $T = UF$ is \re-exact.
\end{lem}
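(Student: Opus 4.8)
The plan is to prove this as the \re-exact restatement of Beck's monadicity theorem announced just above, but in a self-contained way whose only real tools are Lemma~\ref{lem-coforkex} and the fact that split coforks are universally exact.

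I would first dispose of the easy implication and of the final assertion. For (i)~$\Rightarrow$~(ii): if $U$ is monadic it has a left adjoint $F$ by definition, and one has $U = U_T K$, where the forgetful functor $U_T$ is conservative and $K$ is an equivalence, hence conservative; as a composite of conservative functors, $U$ is conservative. For the last sentence, assuming $F$ exists, $F$ is a left adjoint and so preserves all colimits, in particular reflexive coequalizers, whence $F$ is \re-exact; since $U$ is \re-exact by hypothesis, $T = UF$ is a composite of \re-exact functors and is therefore \re-exact.

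The substance is (ii)~$\Rightarrow$~(i). Form the monad $T = UF$ and the comparison functor $K : \D \to \C_T$ (so that $U_T K = U$). The idea is to build a left adjoint $L$ to $K$ using the reflexive coequalizers available in the \re-category $\D$: for a $T$-module $(M,\rho)$ the parallel pair $F\rho,\ \eps_{FM} : FTM \to FM$ is reflexive, with common section $F\eta_M$ (using $\rho\eta_M = \id_M$ and the triangle identity $\eps_{F}\circ F\eta = \id_F$), so I set $L(M,\rho)$ to be its coequalizer. A routine verification of universal properties then produces the adjunction $L \dashv K$; write $\ol\eta$ and $\ol\eps$ for its unit and counit. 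It remains to prove that $\ol\eta$ and $\ol\eps$ are isomorphisms, and both reductions rest on a single observation: for every $T$-module $(M,\rho)$ the cofork $T^2M \rightrightarrows TM \xrightarrow{\rho} M$ with maps $T\rho$ and $\mu_M$ is split, with splitting $(\eta_M,\eta_{TM})$, hence universally exact.

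For the unit, apply the conservative \re-exact functor $U_T$: since $U$ is \re-exact it preserves the coequalizer defining $L$, so $U_T KL(M,\rho) = U L(M,\rho)$ is the coequalizer of $(T\rho,\mu_M)$, which by the observation is $M$ via $\rho$ (here one uses $UF\rho = T\rho$ and $U\eps_{FM} = \mu_M$); under this identification $U_T(\ol\eta_{(M,\rho)})$ is the canonical comparison, hence an isomorphism, and conservativity of $U_T$ forces $\ol\eta_{(M,\rho)}$ to be an isomorphism. For the counit, fix $Y$ in $\D$ and consider the reflexive cofork $FU\eps_Y,\ \eps_{FUY} : FUFUY \to FUY$ together with $\eps_Y : FUY \to Y$ (section $F\eta_{UY}$); its image under $U$ is precisely the canonical split presentation of the module $KY = (UY, U\eps_Y)$, hence exact. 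Since $U$ is conservative and \re-exact, Lemma~\ref{lem-coforkex} transports exactness back to $\D$, so $\eps_Y$ is the coequalizer of the reflexive pair defining $LKY$, and therefore $\ol\eps_Y : LKY \to Y$ is an isomorphism. Thus $K$ is an equivalence and $U$ is monadic. The main obstacle is this counit step, which carries the genuine content of Beck's theorem: everything hinges on recognizing $U$ of the counit cofork as the canonical split presentation of $KY$ and then pulling exactness back along the conservative \re-exact functor $U$ via Lemma~\ref{lem-coforkex}; by comparison the check of $L \dashv K$ and the bookkeeping identities $UF\rho = T\rho$, $U\eps_{F(-)} = \mu$ are routine but must be aligned carefully.
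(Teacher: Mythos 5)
The paper states this lemma without proof, presenting it as a known restatement of Beck's monadicity theorem under \re{}-exactness hypotheses (it is the standard ``reflexive coequalizer'' form of the crude monadicity theorem, as in Barr--Wells). Your argument is the standard proof of that result and it is correct: the left adjoint $L$ of the comparison functor is built from reflexive coequalizers, the unit is handled by the split presentation $T^2M \rightrightarrows TM \to M$ together with conservativity of $U_T$, and the counit by recognizing $U$ of the cofork $FUFUY \rightrightarrows FUY \to Y$ as that split presentation for $KY$ and pulling exactness back along the conservative \re{}-exact functor $U$ via Lemma~\ref{lem-coforkex}. Since the paper supplies no proof to compare against, there is nothing further to reconcile; your write-up fills the gap correctly.
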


Monads which are \re-exact are well-behaved:

\begin{lem}
Let $\C$ be a \re-category, and let $T$ be a monad on $\C$. The following assertions are equivalent:
\begin{enumerate}[(i)]
\item $T$ is \re-exact;
\item $\C_T$ is a \re-category and $U_T$ is \re-exact.
\end{enumerate}
\end{lem}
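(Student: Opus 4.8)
The plan is to treat the two implications separately: (ii) $\Rightarrow$ (i) is essentially formal, while (i) $\Rightarrow$ (ii) requires an explicit construction of coequalizers in $\C_T$, and it is there that \re-exactness of $T$ is actually used.

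For (ii) $\Rightarrow$ (i) I would simply observe that $T = U_T F_T$. The free module functor $F_T$ is a left adjoint, hence preserves all colimits that exist in $\C$; in particular it sends a reflexive coequalizer in $\C$ to a reflexive coequalizer in $\C_T$ (the image of a reflexive pair under a functor is again reflexive, having the image of the common section as section, and the colimit is preserved). Since $U_T$ is assumed \re-exact, the composite $U_T F_T = T$ preserves reflexive coequalizers, so $T$ is \re-exact. Note that this direction does not even use that $\C_T$ has reflexive coequalizers.

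For (i) $\Rightarrow$ (ii) the idea is to build the coequalizer of a reflexive pair in $\C_T$ on top of its underlying coequalizer in $\C$. Let $f, g \co (M,\rho) \to (N,\sigma)$ be a reflexive pair in $\C_T$; the underlying pair $f, g \co M \to N$ is reflexive in $\C$, so it admits a coequalizer $q \co N \to Q$ in $\C$. Since $T$ is \re-exact and $(Tf,Tg)$ is again reflexive, $Tq$ is a coequalizer of $(Tf,Tg)$, and in particular an epimorphism. Using the module axioms $\sigma\,Tf = f\rho$, $\sigma\,Tg = g\rho$ together with $qf = qg$, one checks that $q\sigma \co TN \to Q$ coequalizes $(Tf,Tg)$, so it factors uniquely as $\tau\,(Tq)$ for a unique $\tau \co TQ \to Q$. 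This $\tau$ is the candidate action, and by construction $q \co (N,\sigma) \to (Q,\tau)$ is a morphism of $T$-modules.

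It then remains to verify that $(Q,\tau)$ is a $T$-module and that $q$ is a coequalizer in $\C_T$. The unit axiom for $\tau$ follows after precomposition with the epimorphism $Tq$, using naturality of $\eta$ and the unit axiom for $\sigma$. The universal property is also routine: any $T$-module morphism out of $(N,\sigma)$ coequalizing $f,g$ factors through $q$ in $\C$, the factorization is automatically a module morphism (by cancelling $Tq$), and it is unique because $U_T$ is faithful; hence $q$ is a coequalizer in $\C_T$ whose $U_T$-image is a coequalizer in $\C$, giving both that $\C_T$ is an \re-category and that $U_T$ is \re-exact. The main obstacle is the associativity axiom $\tau\,(T\tau) = \tau\,\mu_Q$ for $\tau$: here one must precompose with $T^2q$ rather than $Tq$, and the argument works only because $T^2$, being a composite of \re-exact functors, is again \re-exact, so that $T^2 q$ is a coequalizer and hence an epimorphism. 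After this precomposition both sides collapse to $q\sigma\mu_N$ via naturality of $\mu$ and associativity of $\sigma$, and cancelling $T^2q$ yields the axiom. Everything else is a standard diagram chase once the reflexivity of the relevant pairs and the epimorphism property of $Tq$ and $T^2q$ are secured.
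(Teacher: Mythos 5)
The paper states this lemma without proof, treating it as a standard fact (it is the usual statement that $U_T$ creates the colimits preserved by $T$ and $T^2$, as in Barr--Wells), so there is no argument of the author's to compare yours against. Your proof is the standard creation argument and is correct: the direction (ii)~$\Rightarrow$~(i) via $T = U_T F_T$ and preservation of colimits by the left adjoint $F_T$ is right, and in (i)~$\Rightarrow$~(ii) you correctly identify the one genuinely non-formal point, namely that verifying associativity of the induced action $\tau$ requires $T^2 q$ to be epimorphic, which is where \re{}-exactness of $T$ (hence of $T^2$) enters. The only blemish is a typing slip: for the unit axiom the morphism $\tau\,\eta_Q$ has source $Q$, so the epimorphism you cancel is $q$ itself (using $\eta_Q\, q = (Tq)\,\eta_N$ by naturality), not $Tq$; this is cosmetic and does not affect the argument.
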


\subsection{Module and bimodule functors}\label{sect-bimod-fun}

Let $\C$, $\D$ be two categories.

Given a monad $(T,\mu,\eta)$ on $\C$, a \emph{left action of $T$} on a functor $M: \D \to \C$ is a natural transformation $l : T M \to M$ satisfying
$$l(\mu \,M) = l(T \,l) \blabla{and} l \,(\eta \, M) = \id_M.$$
A \emph{left $T$-module functor with source $\D$} is a functor $M : \D \to \C$ endowed with a left action of $T$. Given two such left $T$-modules functors $(M,l)$ and $(M',l')$ with source $\D$, a natural transformation $f : M \to M'$ is \emph{left $T$-linear} if it satisfies: $f\,l = l' \, T f$.

Similarly, given a monad $(P,\mu',\eta')$ on $\D$, a \emph{right action of $T$} on a functor $M: \D \to \C$ is a natural transformation $r : M P \to M$ satisfying
$$r(M \mu') = r(r\, P) \blabla{and} r \,(M \,\eta') = \id_M.$$
A \emph{right $P$-module functor with target $\C$} is a functor $M : \D \to \C$ endowed with a right action of $P$.
Given two such right $P$-modules functors $(M,r)$ and $(M',r')$ with target $\C$, a natural transformation $f : M \to M'$ is \emph{right $T$-linear} if it satisfies: $f\,r = r' \, P f$.

Lastly, given both a monad $T$ on $\C$ and a monad $P$ on $\D$, a \emph{$T$-$P$-bimodule functor} is a functor $M : \D \to \C$ endowed with a
left action $r$ of $T$ and a right action $l$ of $P$ which commute, that is, which satisfy $l(T\, r) = r(l \, P)$.

We denote by $\BimodFun(T,P)$ the \emph{category of $T$-$P$-bimodule functors}, whose morphisms are natural transformations which are both $T$-linear on the left and $P$-linear on the right. When $P = \id_{\D}$ (resp. $T = \id_{\C})$ we obtain the \emph{category of left $T$-module functors with source $\D$} (resp. the \emph{category of right $P$-module functors with target $\C$}).

\begin{exa}
A $T$-module is nothing but a left $T$-module functor with source the initial category $*$, in other words:  $\C_T  = \BimodFun(T,\id_*)$.
\end{exa}

\begin{lem} Let $T$ be a monad on a category $\C$, let $P$ be a monad on a category $\D$, and let $F$ be a functor  $\D_P \to \C_T$. Then the functor $F^\flat = U_T F F_P$ has a canonical structure of $T$-$P$-bimodule functor.
\end{lem}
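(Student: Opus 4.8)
The plan is to obtain the two actions from the canonical module structures that every adjunction puts on its forgetful and free functors, and then to reduce the only nontrivial axiom — the commutation of the actions — to the interchange law. First I would recall these canonical structures. For the adjunction $F_T \vdash U_T$ with counit $\eps^T : F_T U_T \to \id_{\C_T}$, the natural transformation $\lambda := U_T \eps^T : T U_T \to U_T$ (using $U_T F_T = T$) is a left action of $T$ on the functor $U_T$: its associativity axiom $\lambda(\mu U_T) = \lambda(T\lambda)$ is the naturality of $\eps^T$, and its unit axiom $\lambda(\eta U_T) = \id_{U_T}$ is the triangle identity. Dually, for $F_P \vdash U_P$ with counit $\eps^P$, the transformation $\rho := \eps^P F_P : F_P P \to F_P$ (using $U_P F_P = P$) is a right action of $P$ on $F_P$, again by naturality of $\eps^P$ and the triangle identity. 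Thus $U_T$ is a left $T$-module functor (with target $\C$) and $F_P$ is a right $P$-module functor (with target $\D_P$).

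Next I would define the two actions on $F^\flat = U_T F F_P$ purely by whiskering:
$$l := \lambda \, F F_P : T F^\flat \to F^\flat, \qquad r := U_T F \, \rho : F^\flat P \to F^\flat.$$
Whiskering a left $T$-module functor on the right by an arbitrary functor (here $F F_P$) again produces a left $T$-module functor, so $l$ satisfies the left-action axioms; symmetrically, whiskering the right $P$-module functor $F_P$ on the left by $U_T F$ yields a right action $r$ of $P$ on $F^\flat$. Both verifications are immediate from the axioms already established for $\lambda$ and $\rho$, since whiskering is functorial.

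The one step that genuinely has to be checked — and the heart of the matter — is the commutation $l(T r) = r(l P)$. Computing components at an object $X$ of $\D$, the right action inserts the morphism $F(\rho_X) : F F_P P X \to F F_P X$ of $\C_T$, and I claim the required identity is exactly the naturality square of $\lambda = U_T \eps^T$ at this morphism. Indeed $l_X = \lambda_{F F_P X}$, $r_X = U_T F(\rho_X)$, and naturality of $\lambda$ at $F(\rho_X)$ gives $\lambda_{F F_P X} \circ T(U_T F(\rho_X)) = U_T F(\rho_X) \circ \lambda_{F F_P P X}$, which is precisely the component at $X$ of $l \circ (T r) = r \circ (l P)$. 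Conceptually this is the interchange law: $l$ acts on the outer slot $U_T$ while $r$ acts on the inner slot $F_P$, so the two are automatically compatible. Everything else being bookkeeping, I would conclude by noting that these structures use only the adjunction data of $T$ and $P$ and the functor $F$, hence are canonical, so $F^\flat$ is canonically an object of $\BimodFun(T,P)$.
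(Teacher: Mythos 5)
Your proof is correct and follows essentially the same route as the paper: the paper defines exactly the same transformations $l = U_T\eps^T F F_P$ and $r = U_T F \eps^P F_P$ and leaves the verification to the reader, which you have carried out correctly (action axioms from the triangle identities and naturality of the counits, commutation from naturality of $U_T\eps^T$ at $F(\eps^P F_P)$).
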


\begin{proof}
Define natural transformation $l=U_T \eps^T F F_P : U_T F_T U_T F F_P \to U_T F F_P$ and $r = U_T F \eps^P F_P:  U_T F F_P U_P F_P \to U_T F F_P$, where  $\eps^T : F_T U_T \to \id_{\C_T}$ and $\eps^P : F_P U_P  \to \id_{\D_P}$ are the adjunction coevaluations. One verifies that $l$ and $r$ are the left and right actions of a $T$-$P$-bimodule structure on the functor $F^\flat$.
\end{proof}

Consider a diagram of functors $\xymatrix{\D \ar[r]^N &\C \ar[r]^M& \B}$.
Let $(T,\mu,\eta)$ be a monad on $\C$, with a left action $l$ of $T$ on $N$
and a right action $r$ of $T$ on $M$.

We say that $M$ and $N$ are \emph{tensorable over $T$} if for every $X$ in  $\D$, the parallel pair $(r\,N, M\,l) : \xymatrix{MTN(X) \dar{}{} &MN(X)}$
admits a coequalizer. Note that this parallel pair is reflexive (having $M \eta_{NX}$ as common section), so that tensorability is ensured  whenever $\B$ is a \re-category.

When $M$ and $N$ are tensorable over $T$,  we define a functor
$$\left\{
\begin{array}{l}
   M \odot_T N : \D \to \B\\
  X \mapsto \coeq(\xymatrix{ MPN(X) \dar{r\,N}{M\,l} &MN(X)}),
\end{array}
\right.$$
called the \emph{tensor product of $M$ and $N$ over $T$}.

Note that if $\B$, $\C$, $\D$ are \re-categories and $T$, $M$ and $N$ are \re-exact, so is $M \odot_T N$.

The following lemma provides a key example of tensorable functors over a monad:

\begin{lem}\label{lem-exa-tensorable} Let $(T,\mu,\eta)$ be a monad on a category $\C$. Then $U_T : \C_T \to \C$ is a left $T$-module functor, and $F_T : \C \to \C_T$ is a right
$T$-module functor. Morever, $F_T$ and $U_T$ are $T$-tensorable, and we have a canonical isomorphism
$$F_T \odot_T U_T \iso \id_{\C_T}.$$
\end{lem}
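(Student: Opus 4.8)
The plan is to unwind every structure on a module $X=(M,\rho)$ and to reduce the coequalizer in $\C_T$ to a split one in $\C$. First I would exhibit the two actions as whiskerings of the counit $\eps^T\colon F_TU_T\to\id_{\C_T}$: take $l=U_T\eps^T\colon TU_T\to U_T$ as the left action on $U_T$, and $r=\eps^T F_T\colon F_TT\to F_T$ as the right action on $F_T$. On a module $X=(M,\rho)$ these are $l_X=\rho$ and $r_M=\mu_M$, the latter being $T$-linear as a map $(T^2M,\mu_{TM})\to(TM,\mu_M)$ by associativity of $\mu$. The module-functor axioms then amount to the triangle identities of the adjunction together with (co)associativity of $\eps^T$, so this step is routine.

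Next I would compute the relevant parallel pair. At $X=(M,\rho)$ the pair $(r\,U_T,\,F_T\,l)$ is
$$\mu_M,\ T\rho\;\colon\;(T^2M,\mu_{TM})\rightrightarrows(TM,\mu_M),$$
which is reflexive with common section $F_T\eta_M=T\eta_M$. My candidate coequalizer is $\eps^T_X=\rho\colon(TM,\mu_M)\to(M,\rho)$, which is $T$-linear and coequalizes the pair by the module axiom $\rho\mu_M=\rho\,T\rho$. The key observation is that applying $U_T$ yields the cofork $T^2M\rightrightarrows TM\xrightarrow{\ \rho\ }M$ in $\C$, which is split by $\sigma=\eta_M$ and $\gamma=\eta_{TM}$: indeed $\rho\eta_M=\id_M$ (module unit), $\mu_M\eta_{TM}=\id_{TM}$ (monad unit), and $T\rho\,\eta_{TM}=\eta_M\rho$ (naturality of $\eta$). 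Hence this cofork is exact by the lemma that a split cofork is exact.

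Finally I would promote this to a coequalizer in $\C_T$ directly. Given a $T$-linear $t\colon(TM,\mu_M)\to(N,\nu)$ with $t\,T\rho=t\,\mu_M$, the split coequalizer in $\C$ supplies the unique $\C$-factorization $\bar t=t\eta_M$ satisfying $\bar t\rho=t$; one then checks that $\bar t$ is $T$-linear from $\nu\,T\bar t=\nu\,T(t)\,T\eta_M=t\,\mu_M\,T\eta_M=t=\bar t\rho$, and uniqueness in $\C_T$ follows from faithfulness of $U_T$. Thus $\rho=\eps^T_X$ is a coequalizer in $\C_T$, so $F_T$ and $U_T$ are $T$-tensorable with $(F_T\odot_T U_T)(X)\cong X$, and naturality of $\eps^T$ upgrades this to an isomorphism of functors $F_T\odot_T U_T\iso\id_{\C_T}$. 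The main obstacle is precisely this last promotion: since $\C$ is not assumed to be an \re-category, Lemma~\ref{lem-coforkex} is unavailable, and one must argue by hand that $U_T$ creates this split coequalizer rather than importing exactness from $\C$.
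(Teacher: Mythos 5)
Your proof is correct and follows essentially the same route as the paper: the same actions $U_T\eps$ and $\eps F_T$, the same reflexive cofork $(T\rho,\mu_M)$ with section $T\eta_M$, and the same explicit factorization $\bar t=t\eta_M$ checked to be $T$-linear. The only difference is presentational — you name the underlying splitting $(\eta_M,\eta_{TM})$ in $\C$ before lifting to $\C_T$, whereas the paper runs the identical computation directly in $\C_T$ — and your closing caution that one must create the coequalizer in $\C_T$ by hand rather than invoke \re-exactness is exactly right.
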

\begin{proof}
The left action of $T$ on $U_T$ is $U_T \eps$, and its right action on $F_T$ is $\eps F_T$, where $\eps_{(M,\rho)} = \rho$.
Let $M$ be a $T$-module, with action $\rho : TM \to M$.  We have a reflexive cofork in $\C_T$:
$$\xymatrix{F_T T U_T M \dar{T \delta}{\mu_{M}} & F_T U_T M \ar[r]^\rho & M}.$$
A common section of the parallel pair is given by $F_T\eta_M$. Let us prove the exactness assertion. Let $N$ be a $T$-module, with action $\delta'$, and let
$f : F_T U_T M \to N$ be a $T$-linear morphism coequalizing the parallel pair. If $g : M \to N$ is such that $f= g\delta$, then $f \eta_M = g\delta \eta_M$ = $g$, hence the uniqueness of such a $g$.
Now setting $g = f \eta_M$, note first that $g \delta = f \eta_M \delta = f T(\delta) \eta_{TM} = f \mu_M \eta_{TM} = f$, as required, and moreover $g$ is $T$-linear:
$\delta' Tg = \delta' Tf T\eta_M = f \delta \eta_M = f = g \delta$. This shows the exactness of the above cofork. The coequalizer being unique up to unique isomorphism, we obtain a
canonical isomorphism: $F_T \odot_T U_T \iso \id_{\C_T}$.
\end{proof}

If in addition $P$ is a monad on $\D$, $S$ a \re-exact monad on $\B$, $M$ is a $S$-$T$-bimodule functor and $N$, a $T$-$P$-bimodule functor, then the left action of $M$ and the right action of $N$ define a structure of $S$-$P$-bimodule functor on $M \odot_T N$,
hence a functor
$$\left\{
\begin{array}{l}
 \BimodFun(S,T) \times \BimodFun(T,P) \to \BimodFun(S,P)  \\
  (M,N) \mapsto M \odot_T N
\end{array}
\right.$$

\begin{thm} \label{thm-eq-fun-bimod} Let $\C$, $\D$ be \re-categories and let $T$, $P$ be \re-exact monads on $\C$ and $\D$ respectively.
The assignment $F \mapsto F^\flat = U_T F F_P$ induces an equivalence of categories
$$?^\flat: \Fun_r(\D_P,\C_T) \to \BimodFun_r(T,P)$$
where $\Fun_r \subset \Fun$ denotes the full subcategory of $r$-exact functors, and $\BimodFun_r \subset \BimodFun$, the full subcategory of bimodules whose underlying functor is \re-exact.

A quasi-inverse of $?^\flat$ is given by $M \mapsto M^\sharp = M \odot_P ?$.
\end{thm}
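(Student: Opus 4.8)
The plan is to verify that the two assignments are well-defined functors between the stated categories and then to exhibit natural isomorphisms exhibiting them as mutually quasi-inverse; concretely I will produce isomorphisms $(F^\flat)^\sharp \iso F$ for $F$ in $\Fun_r(\D_P,\C_T)$ and $(M^\sharp)^\flat \iso M$ for $M$ in $\BimodFun_r(T,P)$. That $?^\flat$ lands in $\BimodFun_r(T,P)$ is immediate: $F_P$, being a left adjoint, preserves all colimits, $F$ is \re-exact by hypothesis, and $U_T$ is \re-exact because $T$ is, so the composite $F^\flat = U_T F F_P$ is \re-exact; its bimodule structure is supplied by the lemma giving $F^\flat$ its canonical bimodule structure, and on morphisms $?^\flat$ sends $\varphi \co F \to F'$ to $U_T \varphi F_P$, which is readily seen to be bimodule-linear.

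Both round trips rest on two observations. The first is a split-cofork computation generalizing Lemma~\ref{lem-exa-tensorable}: for any $M$ in $\BimodFun(T,P)$ and any $X$ in $\D$, the right action $r$ of $M$ makes $M P P X \rightrightarrows M P X \labelto{r_X} M X$ a cofork split by $M \eta^P$, hence universally exact, so that $M \odot_P F_P(X) \iso M(X)$ in $\C_T$. This yields round trip two at once: $(M^\sharp)^\flat(X) = U_T(M \odot_P F_P X) \iso U_T(M(X), l_X) = M(X)$, where $l$ is the left $T$-action of $M$, giving $(M^\sharp)^\flat \iso M$ after checking bimodule-compatibility and naturality in $M$. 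The second observation is the canonical presentation of Lemma~\ref{lem-exa-tensorable} applied to $P$: every $P$-module $Y$ is the reflexive coequalizer of $F_P P U_P Y \rightrightarrows F_P U_P Y$, with parallel pair $\eps^P_{F_P U_P Y}$ and $F_P(\rho_Y)$, where $\rho_Y = U_P \eps^P_Y$.

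For round trip one, I apply the \re-exact functors $F$ and then $U_T$ to this presentation of $Y$. Since $F$ is \re-exact, $F(Y)$ is the coequalizer in $\C_T$ of $F F_P P U_P Y \rightrightarrows F F_P U_P Y$; applying the \re-exact $U_T$ gives $U_T F(Y)$ as a coequalizer in $\C$. Because $F^\flat = U_T F F_P$ carries the right $P$-action $U_T F \eps^P F_P$, this coequalizer diagram is \emph{literally} the one defining $(F^\flat)^\sharp(Y) = F^\flat \odot_P Y$, the two parallel pairs coinciding term by term. Hence $(F^\flat)^\sharp(Y) \iso U_T F(Y)$, and matching the left $T$-actions via naturality of $\eps^T$ upgrades this to an isomorphism $(F^\flat)^\sharp(Y) \iso F(Y)$ in $\C_T$, natural in $Y$ and in $F$. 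This step uses \re-exactness of $F$ in an essential way, which is exactly why the source category must be $\Fun_r$.

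The one point requiring genuine work, and the main obstacle, is that $?^\sharp$ truly lands in $\Fun_r$, that is, that $M^\sharp = M \odot_P {?}$ preserves reflexive coequalizers. Given a reflexive coequalizer in $\D_P$, I first push it through $U_P$ (which is \re-exact because $P$ is), then through the \re-exact functors $M$ and $M P$, obtaining a grid of reflexive parallel pairs in $\C$ whose columns are the coequalizers defining $M^\sharp$ and whose top two rows are coequalizers; the conclusion that the bottom row, namely $M^\sharp$ applied to the original coequalizer, is again a coequalizer is the commutation of reflexive coequalizers with reflexive coequalizers, a Fubini argument for colimits. Alternatively, since $U_T$ is conservative and \re-exact, one reduces the statement to $\C$ by Lemma~\ref{lem-coforkex}. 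Granting this, the remaining verifications, functoriality of $?^\sharp$ on bimodule morphisms and naturality of the two isomorphisms, are routine, and the equivalence follows.
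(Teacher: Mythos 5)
Your proof is correct and follows essentially the same route as the paper's: the split-cofork computation giving $(M^\sharp)^\flat \iso M$, and the canonical presentation $F_P \odot_P U_P \iso \id_{\D_P}$ of Lemma~\ref{lem-exa-tensorable} pushed through the \re-exact $F$ and $U_T$ for the other round trip. You are merely more explicit than the paper about well-definedness (in particular that $M \odot_P {?}$ is again \re-exact, which the paper records only as a one-line remark before the theorem), and that extra care is sound.
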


\begin{proof}
In view of the above considerations, the functors $?^\flat$ and $?^\sharp$ are clearly well-defined, and all we have to do is check that they are quasi-inverses.
Denote by $\mu$ be the product of $P$ and by $\eta$ its unit.

Let $M$ be a \re-exact $T$-$P$-bimodule functor, and let $r : MP \to P$ be the right action of $P$ on $M$. We have $M^\sharp = M \odot_P ? : \D_P \to \C_T$, so ${M^\sharp}^\flat = U_T M\odot_P U_P$. Hence for $X \in \Ob(\D)$
an exact cofork:
$$\xymatrix{MP^2 X \dar{r P}{M \mu}& MP X\ar[r] & {M^\sharp}^\flat X}.$$
On the other hand, the cofork:
$$\xymatrix{MP^2 X\dar{r P}{M \mu}& MP X\ar[r]^r & MX}$$
is split, with splitting $(M\eta_X,MP \eta_X)$, and therefore exact.
By universality of the coequalizer, we have a canonical isomorphism $M X \iso {M^\sharp}^\flat X$.

Now let $F: \D_P \to \C_T$ be a \re-exact functor. For each $P$-module $M$, $${F^\flat}^\sharp M = F F_P\odot_P U_P M.$$
Now by Lemma~\ref{lem-exa-tensorable}, we have a canonical isomorphism: $F_P \odot_P U_P \iso \id_{\D_P}$,
and by composing this with $F$, which is \re-exact, we obtain a canonical isomorphism ${F^\flat}^\sharp M \iso F M$.
\end{proof}

The following lemma relates the composition of functors to the tensor product of bimodules.

\begin{lem}\label{lem-comp-tens} Let $\B$, $\C$, $\D$ be \re-categories and let $S$, $T$, $P$ be \re-exact monads on $\B$, $\C$, $\D$ respectively.
Let $M$ be a  \re-exact $S$-$T$-bimodule functor and let $N$ be a \re-exact $T$-$P$-bimodule functor.
We have a canonical isomorphism in $\Fun_\re(\D_P, \B_S)$:
$$(M \odot_{T} N)^\sharp \iso M^\sharp N^\sharp.$$
\end{lem}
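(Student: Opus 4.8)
The plan is to deduce the statement from the equivalence of Theorem~\ref{thm-eq-fun-bimod} rather than to manipulate the iterated coequalizers by hand. Both $(M\odot_T N)^\sharp$ and $M^\sharp N^\sharp$ lie in $\Fun_r(\D_P,\B_S)$: the former because $M \odot_T N$ is an \re-exact $S$-$P$-bimodule functor (as noted right after the definition of the tensor product), the latter because it is the composite of the \re-exact functors $N^\sharp : \D_P \to \C_T$ and $M^\sharp : \C_T \to \B_S$. Since $?^\flat : \Fun_r(\D_P,\B_S) \to \BimodFun_r(S,P)$ is an equivalence, it is fully faithful and hence reflects isomorphisms; so it suffices to produce a natural isomorphism of $S$-$P$-bimodule functors $(M^\sharp N^\sharp)^\flat \iso \big((M\odot_T N)^\sharp\big)^\flat$. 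But the proof of Theorem~\ref{thm-eq-fun-bimod} already supplies a canonical iso $\big((M\odot_T N)^\sharp\big)^\flat\iso M \odot_T N$, so the whole problem reduces to constructing a canonical isomorphism $(M^\sharp N^\sharp)^\flat \iso M \odot_T N$ in $\BimodFun_r(S,P)$.

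To build this, I would unfold the left-hand side as $(M^\sharp N^\sharp)^\flat = U_S M^\sharp N^\sharp F_P$ and evaluate at $Z \in \D$. First I identify $N^\sharp F_P$: by definition $N^\sharp F_P(Z) = N \odot_P F_P Z$ is the coequalizer of the pair $(r_N\,PZ,\; N\mu_Z) : NP\,PZ \rightrightarrows N\,PZ$, where $r_N$ is the right $P$-action on $N$ and $\mu$ is the product of $P$. This cofork is split by $(N\eta_Z,\, NP\eta_Z)$ exactly as in Lemma~\ref{lem-exa-tensorable} (and in the proof of Theorem~\ref{thm-eq-fun-bimod}), so $r_N : NPZ \to NZ$ is its coequalizer; thus $U_T N^\sharp F_P \iso N$ naturally in $Z$, and the induced $T$-module structure on $N^\sharp F_P(Z)$ is precisely the left $T$-action $l$ of $N$. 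In other words, $N^\sharp F_P$ is $N$ regarded as a functor with values in $\C_T$.

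Next I would unfold $M^\sharp$ on this object. Since $U_S$ is \re-exact, the underlying object $U_S M^\sharp\big(N^\sharp F_P(Z)\big) = U_S\big(M \odot_T N^\sharp F_P(Z)\big)$ is computed as a coequalizer in $\B$, namely $\coeq(MT\,NZ \rightrightarrows M\,NZ)$ with the two arrows the right $T$-action $r$ of $M$ and $M$ applied to the left $T$-action $l$ of $N$ — which is exactly the reflexive pair $(r\,N,\,M\,l)$ defining $(M \odot_T N)(Z)$. Hence $U_S(M^\sharp N^\sharp)^\flat(Z) \iso (M\odot_T N)(Z)$, canonically and naturally in $Z$.

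The only delicate point left — and the main obstacle — is checking that this identification of underlying functors is compatible with structure: it must intertwine the left $S$-actions (coming on both sides from the left $S$-action of $M$) and the right $P$-actions (coming from the right $P$-action of $N$), so as to be an isomorphism in $\BimodFun_r(S,P)$ and not merely in $\Fun$. This is a diagram chase relying on the fact that all coequalizers in sight are reflexive, so that $U_S$, $M$, $T$ and $F_P$ may be slid past them by \re-exactness; once the two actions are seen to agree on the coequalizer quotients via their universal property, the equivalence $?^\flat$ transports the resulting bimodule isomorphism back to the desired $(M\odot_T N)^\sharp \iso M^\sharp N^\sharp$ in $\Fun_r(\D_P,\B_S)$. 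One could alternatively argue directly by exhibiting both sides, evaluated at a $P$-module $X$, as the colimit of the common double-coequalizer diagram with corner $MTNPX_0$ and invoking the interchange of reflexive coequalizers; the route through $?^\flat$ simply packages this Fubini argument inside the already-established equivalence.
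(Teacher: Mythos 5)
Your proposal is correct and follows essentially the same route as the paper: the paper also transfers the problem across the equivalence of Theorem~\ref{thm-eq-fun-bimod}, producing the key isomorphism $(M^\sharp N^\sharp)^\flat \iso M\odot_T N$ by composing the canonical isomorphism $F_T\odot_T U_T \iso \id_{\C_T}$ of Lemma~\ref{lem-exa-tensorable} on the left with $U_S M^\sharp$ and on the right with $N^\sharp F_P$, which is exactly the packaged form of your pointwise split-coequalizer computation of $N^\sharp F_P\iso N$ followed by $M^\sharp$. The bimodule-compatibility check you flag is likewise left implicit in the paper, since the isomorphisms being composed are already morphisms of bimodule functors.
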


\begin{proof}
Let $F = M^\sharp$ and $G= N^\sharp$.
According to Lemma~\ref{lem-exa-tensorable}, we have a canonical isomorphism: $$F_T \odot_T U_T \iso \id_{\D_T}.$$
Composing this isomorphism on the left with $U_S F$ (which is \re-exact) and on the right, with $G F_P$, we obtain an isomorphism $F^\flat \odot_{T} G^\flat \iso (FG)^\flat$,
and we conclude by observing that $M \simeq F^\flat$ and $N \simeq G^\flat$.
\end{proof}

\subsection{Comonoidal monads and Hopf monads}\label{sect-comohomo}

Let $\C$ be a monoidal category. A \emph{comonoidal monad} on $\C$ is a monad $(T,\mu,\eta)$ on $\C$ endowed with a comonoidal structure $(T,T^2,T^0)$ such that $\mu$ and $\eta$ are comonoidal
natural transformations.

If $T$ is a comonoidal monad on $^\C$, then $\C_T$ admits a unique monoidal structure such that $U_T: \C_T \to \C$ is comonoidal strict. The tensor product of $\C_T$ is defined by
$$(M,\delta) \otimes (M',\delta') = (M \otimes M', (\delta \otimes \delta')T^2(M,M')),$$
and its unit object is $(\un,T^0)$.

Comonoidal monads are related to comonoidal adjunctions. A \emph{comonoidal adjunction} is an adjunction $F \vdash U$, where $F : \D \to \C$, $U : \C \to \D$ are comonoidal functors between monoidal categories $\C$ and $\D$, and the adjunction morphisms $\eps : FU \to \id_\D$, $\eta : \id_\C \to UF$ are comonoidal. Note that these conditions imply that $U$ is actually strong comonoidal. Given such a comonoidal adjunction, $T= UF$ is a comonoidal monad on $\C$, and the comparison functor $K : \D \to \C_T$ is strong comonoidal. In particular, if the
adjunction is monadic, $K$ is a (strong) monoidal equivalence.

\begin{lem}
If $\C$ is a monoidal \re-category and $T$ a \re-exact comonoidal monad on $\C$, then $\C_T$ is a monoidal \re-category.
\end{lem}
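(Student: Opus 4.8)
The plan is to combine the two preceding lemmas. I have a comonoidal monad $T$ on a monoidal \re-category $\C$, and I want to show that $\C_T$ is again a monoidal \re-category.

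First I would extract what is already available. By the lemma on comonoidal monads, $\C_T$ carries a monoidal structure for which $U_T : \C_T \to \C$ is strict comonoidal; in particular the tensor product of $\C_T$ is computed as $(M,\delta)\otimes(M',\delta') = (M\otimes M', (\delta\otimes\delta')T^2(M,M'))$, with underlying object $M\otimes M'$ in $\C$. By the lemma relating \re-exactness of $T$ to \re-exactness of $U_T$, the hypothesis that $T$ is \re-exact gives that $\C_T$ is a \re-category (so the needed reflexive coequalizers exist) and that $U_T$ is \re-exact. So the only thing that remains to be proved is that the tensor product $\otimes : \C_T\times\C_T \to \C_T$ is \re-exact.

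The key step is to verify this \re-exactness by transporting the question along $U_T$. Since $U_T$ is strict comonoidal, it commutes with the tensor products on the nose: $U_T((M,\delta)\otimes(M',\delta')) = U_T(M,\delta)\otimes U_T(M',\delta')$. Now take a reflexive pair in $\C_T\times\C_T$ and its coequalizer; I want to show the tensored pair has the expected coequalizer. Because $U_T$ creates (or at least reflects and preserves) reflexive coequalizers for an \re-exact monad — concretely, $U_T$ is \re-exact and conservative, and colimits in $\C_T$ are computed as in $\C$ on underlying objects — it suffices to check \re-exactness after applying $U_T$. Applying $U_T$ turns the tensor product of $\C_T$ into the tensor product of $\C$, which is \re-exact by the assumption that $\C$ is a monoidal \re-category. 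Since reflexive coequalizers in a product category are computed componentwise, and the tensor product $\otimes_\C : \C\times\C\to\C$ preserves them in each variable (equivalently, jointly, as the excerpt notes this amounts to each $X\otimes ?$ and $?\otimes X$ being \re-exact), the image under $U_T$ of the tensored coequalizer diagram is exact. Conservativity and \re-exactness of $U_T$ (via Lemma~\ref{lem-coforkex}) then promote this back to an exact cofork in $\C_T$, giving that $\otimes : \C_T\times\C_T\to\C_T$ preserves reflexive coequalizers.

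The main obstacle I anticipate is the bookkeeping around \emph{joint} versus \emph{separate} \re-exactness of a two-variable functor on a product \re-category: one must be careful that a reflexive pair in $\C_T\times\C_T$ need not split as a product of reflexive pairs, so the cleanest route is to reduce the two-variable coequalizer to iterated one-variable coequalizers using that each endofunctor $(M,\delta)\otimes ?$ and $?\otimes(M',\delta')$ on $\C_T$ is \re-exact, checking the latter by pushing through $U_T$ to the already-known \re-exactness of $X\otimes ?$ and $?\otimes X$ in $\C$. Once that reduction is in place the argument is routine, so I would phrase the proof as: reduce to one-variable \re-exactness of $\otimes$ on $\C_T$, verify it by applying the strict comonoidal \re-exact conservative functor $U_T$ and invoking the corresponding property in $\C$ together with Lemma~\ref{lem-coforkex}.
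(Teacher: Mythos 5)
Your argument is correct. Note that the paper states this lemma without proof, so there is nothing to compare against directly; but your route is the natural one and surely the intended one: the monoidal structure on $\C_T$ with $U_T$ strict comonoidal comes from the preceding discussion, the fact that $\C_T$ is an \re-category with $U_T$ \re-exact comes from the lemma characterizing \re-exact monads, and the \re-exactness of the tensor product of $\C_T$ is checked one variable at a time (as the paper's own remark on monoidal \re-categories licenses) by pushing a reflexive cofork through the conservative, \re-exact, strict comonoidal functor $U_T$ and invoking Lemma~\ref{lem-coforkex}. Your closing reduction from joint to separate \re-exactness resolves the only delicate point, and the proof is complete.
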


The \emph{left and right fusion operators} $H^l$ and $H^r$ of a comonoidal monad $T$ on a monoidal category $\C$ are the natural transformations
$$H^l(X,Y) =  (T X \otimes \mu_Y){T^2}(X,TY) : T(X \otimes TY) \to T X \otimes T Y$$
and
$$H^r(X,Y) = (\mu_X \otimes T Y) {T^2}(TX,Y) : T(T X \otimes Y) \to T_X \otimes T_ Y$$
for $X$, $Y$ in $\C$.
A comonoidal monad is a \emph{left Hopf monad} (resp. \emph{right Hopf monad}) if its left (resp. right) fusion operator is an isomorphism. It is a \emph{Hopf monad} if both fusion operators are isomorphisms.

\subsection{Comonoidal bimodule functors}\label{sect-cobifun}

Let $T$ be a comonoidal monad on a monoidal category $\C$, and $P$ be a comonoidal monad on a monoidal category $\D$. A \emph{comonoidal $T$-$P$-bimodule functor} is a comonoidal functor
$M : \D \to \C$ endowed with a $T$-$P$-bimodule functor structure such that the left action $TM \to M$ and the right action $MP \to P$ are comonoidal natural transformations.

Comonoidal $T$-$P$-bimodule functors form a category $\coBimodFun(T,P)$, whose morphism are bimodule morphisms which are comonoidal natural transformations.

The results of Section~\ref{sect-bimod-fun} can be adapted in a straightforward way to comonoidal bimodule functors. Notably, if $F : \D_P \to \C_T$ is a comonoidal functor, then
$F^\flat = U_T F F_P :  _D \to \C$ is a $T$-$P$-comonoidal bimodule functor.

If $S$ is a \re-exact comonoidal monad on a monoidal category $\B$, $P$ a comonoidal  monad on $\D$, $M$ is a comonoidal $S$-$T$-bimodule functor and $N$, a comonoidal $T$-$P$-bimodule functor, then $M \odot_T N$ is a comonoidal $S$-$P$-bimodule functor,
hence a functor
$$\left\{
\begin{array}{l}
 \coBimodFun(S,T) \times \coBimodFun(T,P) \to \coBimodFun(S,P)  \\
  (M,N) \mapsto M \odot_T N
\end{array}
\right.$$

\begin{thm} \label{thm-eq-comon-bimod} Let $\C$, $\D$ be monoidal \re-categories and let $T$, $P$ be \re-exact comonoidal monads on $\C$ and $\D$ respectively.
The assignment $F \mapsto F^\flat = U_T F F_P$ induces an equivalence of categories
$$?^\flat: \coFunr(\D_P,\C_T) \to \coBimodFunr(T,P)$$
where $\coFunr \subset \coFun$ denotes the full subcategory of $r$-exact comonoidal functors, and $\coBimodFunr \subset \coBimodFun$, the full subcategory of comonoidal bimodules whose underlying functor is \re-exact.

A quasi-inverse of $?^\flat$ is given by $M \mapsto M^\sharp = M \odot_P ?$.
\end{thm}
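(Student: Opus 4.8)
The plan is to deduce this comonoidal statement from its plain counterpart, Theorem~\ref{thm-eq-fun-bimod}, treating the comonoidal structure as extra data to be tracked rather than reproved. By the remarks preceding the theorem, $?^\flat$ sends a comonoidal $r$-exact functor to a comonoidal $r$-exact bimodule functor and $?^\sharp = {?}\odot_P{?}$ sends a comonoidal $r$-exact bimodule functor to a comonoidal $r$-exact functor, and both send comonoidal natural transformations to comonoidal ones; so $?^\flat$ and $?^\sharp$ are well-defined functors between $\coFunr(\D_P,\C_T)$ and $\coBimodFunr(T,P)$. Forgetting comonoidal structure, the underlying functors are exactly those of Theorem~\ref{thm-eq-fun-bimod}. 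Since the inverse of a comonoidal natural isomorphism is again comonoidal, it suffices to verify that the two canonical isomorphisms produced in the proof of Theorem~\ref{thm-eq-fun-bimod} are comonoidal; their naturality and invertibility are already in hand.

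First I would treat the isomorphism $M \iso {M^\sharp}^\flat$ for a comonoidal $r$-exact bimodule functor $M$. Recall it is obtained by comparing two exact coforks with the same left pair: the defining cofork of the coequalizer ${M^\sharp}^\flat = U_T M \odot_P U_P$, and the split cofork whose augmentation is the right action $r : MP \to M$. The point to check is that both coforks live in the comonoidal world: the right action $r$ is comonoidal by definition of a comonoidal bimodule functor, the product $\mu$ and unit $\eta$ of $P$ are comonoidal because $P$ is a comonoidal monad, and $U_T$, $U_P$ are comonoidal strict. Hence the comparison map furnished by the universal property of the coequalizer is a morphism of comonoidal functors, i.e.\@ comonoidal.

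Next I would treat the isomorphism ${F^\flat}^\sharp \iso F$ for a comonoidal $r$-exact functor $F : \D_P \to \C_T$. It is obtained by applying $F$ to the canonical isomorphism $F_P \odot_P U_P \iso \id_{\D_P}$ of Lemma~\ref{lem-exa-tensorable}. That isomorphism is itself comonoidal, since the reflexive cofork it coequalizes, namely $F_P P U_P M \rightrightarrows F_P U_P M \to M$, consists of comonoidal natural transformations ($U_P$ being comonoidal strict and $\mu$, $\eta$ and the module action being comonoidal). Whiskering a comonoidal isomorphism by the comonoidal functor $F$ yields a comonoidal isomorphism, so ${F^\flat}^\sharp \iso F$ is comonoidal, as desired.

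The main obstacle, and the only place where the monoidal $r$-category hypotheses are really used, is the comonoidality of the comparison maps extracted from the universal property of the coequalizers defining $\odot_P$. This rests on the fact --- established when adapting Section~\ref{sect-bimod-fun} to the comonoidal setting --- that the tensor product over a comonoidal monad of comonoidal bimodule functors carries a canonical comonoidal structure for which the structural coequalizer projection is a comonoidal natural transformation; equivalently, that the coequalizers computing $\odot_P$ are compatible with $\otimes$. Granting this, each comparison isomorphism above is automatically comonoidal, and the two triangles expressing that $?^\flat$ and $?^\sharp$ are quasi-inverse commute in the comonoidal categories, completing the proof.
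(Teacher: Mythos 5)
Your proposal is correct and matches the paper's intent: the paper states Theorem~\ref{thm-eq-comon-bimod} without a written proof, remarking only that the results of Section~\ref{sect-bimod-fun} ``can be adapted in a straightforward way'' to the comonoidal setting, and your argument is precisely that adaptation --- reuse the equivalence of Theorem~\ref{thm-eq-fun-bimod} and check that the two canonical comparison isomorphisms are comonoidal because the relevant coforks consist of comonoidal natural transformations and the coequalizers computing $\odot_P$ are compatible with $\otimes$. Nothing further is needed.
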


\begin{lem}\label{lem-comon-tens} Let $\B$, $\C$, $\D$ be monoidal \re-categories and let $S$, $T$, $P$ be comonoidal \re-exact monads on $\B$, $\C$, $\D$ respectively.
Let $M$ be a \re-exact comonoidal $S$-$T$-bimodule functor, and $N$ be a \re-exact comonoidal $T$-$P$-bimodule functor.
We have a canonical comonoidal isomorphism in $\coFun(\D_P,\B_S)$:
$$(M \odot_{T} N)^\sharp \iso M^\sharp N^\sharp.$$
\end{lem}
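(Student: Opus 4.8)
The plan is to mirror the proof of Lemma~\ref{lem-comp-tens} and then verify that the isomorphism produced there is comonoidal. First I would set $F = M^\sharp$ and $G = N^\sharp$; by Theorem~\ref{thm-eq-comon-bimod} these are \re-exact comonoidal functors, and the comonoidal bimodule functors $M$ and $N$ are recovered, up to canonical comonoidal isomorphism, as $F^\flat$ and $G^\flat$. Since Lemma~\ref{lem-comp-tens} already supplies the underlying natural isomorphism $(M \odot_T N)^\sharp \iso M^\sharp N^\sharp$, the only genuinely new content is that this isomorphism respects the comonoidal structures carried by both sides, the source category being the comonoidal functor category $\coFun(\D_P,\B_S)$.

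The key step is a comonoidal refinement of Lemma~\ref{lem-exa-tensorable}: the canonical isomorphism $F_T \odot_T U_T \iso \id_{\C_T}$ is an isomorphism of comonoidal endofunctors of $\C_T$. This holds because $U_T$ is comonoidal strict, $F_T$ is comonoidal (being the free-module functor of the comonoidal monad $T$), the tensor product $\odot_T$ of comonoidal bimodule functors lands in comonoidal functors as recorded in Section~\ref{sect-cobifun}, and the reflexive coequalizer exhibiting the isomorphism is compatible with these comonoidal structures. I would then whisker this comonoidal isomorphism on the left by $U_S F$ and on the right by $G F_P$ — both \re-exact and comonoidal — to obtain a comonoidal isomorphism $F^\flat \odot_T G^\flat \iso (FG)^\flat$. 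Finally, applying the comonoidal quasi-inverse $?^\sharp$ of Theorem~\ref{thm-eq-comon-bimod}, together with $M \odot_T N \simeq F^\flat \odot_T G^\flat$ and $((FG)^\flat)^\sharp \iso FG = M^\sharp N^\sharp$, yields the desired comonoidal isomorphism in $\coFun(\D_P,\B_S)$.

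The main obstacle I expect is confirming that the canonical isomorphism of Lemma~\ref{lem-exa-tensorable}, as well as the whiskering operation, are comonoidal and not merely natural. Concretely this means tracking the comonoidal structure maps $(\,\cdot\,)^2$ and $(\,\cdot\,)^0$ through the reflexive coequalizers defining $\odot_T$, and checking that the comparison maps intertwine them. The crucial input making this work is that $\C_T$ (and likewise $\B_S$, $\D_P$) is a monoidal \re-category, so that its tensor product preserves the relevant reflexive coequalizers; this is what guarantees both that $\odot_T$ of comonoidal bimodule functors is well-defined as a comonoidal functor and that the universality argument producing the comparison isomorphism can be carried out within comonoidal functors. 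Once this compatibility is secured, the remaining verifications are the comonoidal analogues of the already-routine computations in the proof of Lemma~\ref{lem-comp-tens}.
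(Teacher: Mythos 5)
Your proposal is correct and follows exactly the route the paper intends: the paper gives no separate proof of Lemma~\ref{lem-comon-tens}, stating only that the results of Section~\ref{sect-bimod-fun} ``can be adapted in a straightforward way'' to the comonoidal setting, and your argument is precisely that adaptation of the proof of Lemma~\ref{lem-comp-tens} (same decomposition via $F = M^\sharp$, $G = N^\sharp$, the canonical isomorphism $F_T \odot_T U_T \iso \id_{\C_T}$, and whiskering). Your explicit attention to why the comonoidal structure passes through the reflexive coequalizers is a useful filling-in of details the paper leaves implicit, but it is not a different approach.
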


\section{Hopf Polyads}\label{sect-hopf-polyad}

\subsection{Polyads}

A \emph{polyad} consists in a set of data $(D,\C, T, \mu,\eta)$, where

\begin{enumerate}[1)]

\item $D$ is a category,

\item $\C = (\C_i)_{i \in D_0}$ is a family of categories indexed by the objects of $D$,

\item $T = (T_a)_{a \in D_1}$ is a family of functors indexed by the morphisms of $D$, whith $T_a : \C_j \to \C_i$ for $a \in \Hom_D(j,i)$,

\item $\mu = (\mu_{a,b})_{(a,b) \in D_2}$ is a collection of natural transformations indexed by composable pairs of morphisms of $D$, with $\mu_{a,b} : T_aT_b  \to T_ab$,

\item $\eta = (\eta_i)_{i \in D_0}$ is a collection of natural transformations indexed by objects of $D$, with $\eta_i : \id_{\C_i} \to T_i = T_{\id_i}$ for $i \in D_0$,

\end{enumerate}
subject to the following  axioms :
\begin{enumerate}[(i)]
\item Given three composable morphisms $a, b, c$ in $D$,
$$\mu_{ab,c} (\mu_{a,b} \,\id_{T_c}) = \mu_{a,bc}(\id_{T_a} \,\mu_{b,c}),$$
\item Given a morphism $a : j \to i$ in $D$ :
$$\mu_{a,\id_j}(\id_{T_a} \,\eta_j) = \id_{T_a} = \mu_{id_i,a}(\eta_i \,\id_{T_a}).$$
\end{enumerate}

If $(D,\C, T, \mu,\eta)$ is a polyad, the category $D$ is called the \emph{source} of $T$.
A polyad with source $D$ is also called a $D$-polyad.

We denote by $|T|$ the category $\prod_{i \in D_0} \C_i$.

The polyad $T$ is said to be \emph{of action type} if the natural transformations $\mu$ and $\eta$ are isomorphisms.

\begin{rem}
A $D$-polyad is nothing but a lax functor from $D$ to the bicategory of categories. It is of action type if it is a pseudofunctor.
\end{rem}

\begin{exa}
A $*$-polyad $T$, where $*$ denotes the initial category, is nothing but a monad $T$ on a category $\C$.
\end{exa}

\begin{exa}
A $\Delta_1$-polyad, where $\Delta_1$ denotes the category associated with the poset $\{0,1\}$, is a bimodule functor, that is, it consists in the following data:
a monad $T_0$ on a category $\C_0$, a monad $T_1$ on a category $\C_1$, and a $T_1$-$T_0$-bimodule functor $M$.
\end{exa}

\begin{exa}\label{exa-act}
If $G$ is a group, a action-type $\ul{G}$-polyad $T$ is nothing but an action of the group $G$ on a category $\C$.
\end{exa}

\subsection{Modules over a polyad} Let $(D,\C, T, \mu,\eta)$ be a polyad. A \emph{$T$-module} consists in a pair $(X,\rho)$ as follows:
\begin{enumerate}[1)]
\item  $X = (X_i)_{i \in D_0}$ is an object of $|T|$,
\item  $\rho= (\rho_a)_{a \in D_1}$, where for $j \labelto{a} i$ in $D_1$, $\rho_a$ is a morphism $T_a(X_j) \to X_i$ in $\C_i$,
\end{enumerate}
subject to the following axioms:
\begin{enumerate}[(i)]
\item given $(a,b) \in D_2$, $b: k \to j$,  $a : j \to i$, the following diagram commutes:
$$\xymatrix{
T_aT_b(X_k) \ar[r]^{T_a(\rho_b)} \ar[d]_{{\mu_{a,b}}_{X_k}}& T_a(X_j) \ar[d]^{\rho_a}\\
T_{ab}(X_k) \ar[r]_{\rho_{ab}}& X_i
}$$
\item given $i \in D_0$, $\rho_i \eta_i = \id_{X_i}$.
\end{enumerate}

Given a polyad $T$, $T$-modules form a category which we denote by $\Mod(T)$. A morphism between $T$-modules $(X,\rho)$ and $(X',\rho')$ is a morphism $f : X \to X'$ in $|T|$ such that for any $a : j \to i$ in $D_1$,
the following square commutes:
$$\xymatrix{
T_a(X_j) \ar[r]^{T_a(f_j)} \ar[d]_{\rho_a}& T_a(X'_j)\ar[d]^{\rho'_a}\\
X_i \ar[r]_{f_i} & X'_i\,.
}$$

Note that the forgetful functor $U_T : \Mod(T) \to |T|$ is conservative, but in general it does not have a left adjoint. See Section~\ref{sect-polyads-monads} for a discussion of this point.

\begin{exa}\label{exa-constant}
Let $D$ be a small category and $\V$ be an arbitrary category. Define a $D$-polyad $\V_D = (D,\V^{D_0},T=(\id_\V)_{a \in D_1},\mu =\id,\eta = \id)$.
Then $\Mod(D) = \Fun(D,\V)$. A polyad of the form $\V_D$ is a \emph{constant polyad}.
\end{exa}

\begin{lem}\label{lem-act-inv}
Let $T$ be an action-type polyad, and $(X, \rho)$ be a $T$-module. If $a \in D_1$ is invertible, then $\rho_a$ is an isomorphism.
\end{lem}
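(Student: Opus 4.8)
The plan is to handle first the structure maps attached to identity morphisms and then bootstrap to an arbitrary invertible $a$. Since $T$ is of action type, the unit $\eta_i \co \id_{\C_i} \to T_i$ is a natural isomorphism for every object $i$. The module axiom~(ii) reads $\rho_i (\eta_i)_{X_i} = \id_{X_i}$, which exhibits $\rho_i$ as a left inverse of the isomorphism $(\eta_i)_{X_i}$; hence $\rho_i = (\eta_i)_{X_i}^{-1}$ is an isomorphism for every $i$. I would also record that for invertible $a \co j \to i$ the functor $T_a$ is an equivalence: composing the action-type isomorphisms $\mu_{a,a^{-1}} \co T_a T_{a^{-1}} \to T_i$ and $\mu_{a^{-1},a} \co T_{a^{-1}} T_a \to T_j$ with the natural isomorphisms $\eta_i$ and $\eta_j$ produces natural isomorphisms $T_a T_{a^{-1}} \cong \id_{\C_i}$ and $T_{a^{-1}} T_a \cong \id_{\C_j}$, so $T_a$ and $T_{a^{-1}}$ are quasi-inverse equivalences; in particular $T_{a^{-1}}$ is conservative.

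Next I would feed the composable pairs $(a,a^{-1})$ and $(a^{-1},a)$ into the module axiom~(i), obtaining
$$\rho_a \, T_a(\rho_{a^{-1}}) = \rho_i \,(\mu_{a,a^{-1}})_{X_i} \blabla{and} \rho_{a^{-1}} \, T_{a^{-1}}(\rho_a) = \rho_j \,(\mu_{a^{-1},a})_{X_j}.$$
By the first step both right-hand sides are composites of isomorphisms, hence isomorphisms. The first equality then shows that $\rho_a$ is a split epimorphism, and the second shows that $T_{a^{-1}}(\rho_a)$ is a split monomorphism. The main obstacle appears here: the module axioms only ever present $\rho_a$ as a left factor, so they give no left inverse for $\rho_a$ directly, and a naive argument stalls at ``split epimorphism''.

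To upgrade to an isomorphism I would pass to $T_{a^{-1}}(\rho_a)$. Applying the functor $T_{a^{-1}}$ to the first equality gives
$$T_{a^{-1}}(\rho_a) \, T_{a^{-1}}T_a(\rho_{a^{-1}}) = T_{a^{-1}}\!\left(\rho_i \,(\mu_{a,a^{-1}})_{X_i}\right),$$
whose right-hand side is again an isomorphism, so $T_{a^{-1}}(\rho_a)$ is also a split epimorphism. A morphism that is simultaneously a split monomorphism and a split epimorphism is an isomorphism, since its left and right inverses coincide; therefore $T_{a^{-1}}(\rho_a)$ is an isomorphism. Finally, as $T_{a^{-1}}$ is conservative, I conclude that $\rho_a$ is an isomorphism.
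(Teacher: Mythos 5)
Your proof is correct and follows essentially the same route as the paper: both exploit the two instances of the module axiom for the pairs $(a,a^{-1})$ and $(a^{-1},a)$, and both conclude via the observation that a morphism admitting a left and a right inverse is an isomorphism. The only difference is that you apply this to $T_{a^{-1}}(\rho_a)$ and then descend via conservativity of $T_{a^{-1}}$, whereas the paper applies it directly to the right inverse of $\rho_a$ (showing $T_a(\rho_{a^{-1}})$ is itself split epi by symmetry), which makes your equivalence/conservativity detour unnecessary though harmless.
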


\begin{proof} For $i \in D_0$, $\rho_i \eta_i = \id_{X_i}$ and since $\eta_i$ is an isomorphism, so is $\rho_i$.
For $j \labelto{a} i$, $\rho_a (T_a \rho_{a^{-1}}) =  \rho_{i} {\mu_{a,a^{-1}}}_X$ is an isomorphism, so $\rho_a$ has a right inverse.
Now by symmetry, $\rho_{a^{-1}}$ has a right inverse too, as well as $T_a \rho_{a^{-1}}$. Since $\rho_a$'s right inverse is invertible, $\rho_a$ itself is invertible.
\end{proof}

\begin{exa}
\label{exa-equiv}
An action of a group $G$ on a category $\V$  can be viewed as an action-type $\ul{G}$-polyad $T$, and $\Mod(T)$ is the equivariantization of
the category $\C$ under the action of $G$. If $(X,\rho)$ is a $T$-module, $\rho$ is an isomorphism.
\end{exa}

\subsection{Representations of a polyad}\label{sect-rep-poly}

There is an alternate, `bigger' notion of modules over a polyad, which we call representations. Let $T = (D,\C,T,\mu,\eta)$ be a polyad. A \emph{representation of $T$}
is a set of data $(W,\rho)$, where :
\begin{enumerate}[(1)]
\item $A = (W_a)_{a \in D_1}$, with $A_a \in \Ob(\C_j)$ for $j \labelto{a} i$ in $\D_1$,
\item $\rho= (\rho_{a,b})_{(a,b) \in D_2}$, where $\rho_{a,b}$ is a morphism $T_a W_b \to W_{ab}$ for $k \labelto{b} j \labelto{a} i$ in $D_2$,
\end{enumerate}
with the following axioms:
\begin{enumerate}[(1)]
\item for $l \labelto{c} k \labelto{b} j \labelto{a} i$ composable morphisms in $D$,
$$\rho_{ab,c}{\mu_{a,b}}_{W_c} = \rho_{a,bc} (T_a\,\rho_{b,c}),$$
\item for  $j \labelto{a} i$ in $\D_1$, $$\rho_a {\eta_i}_{W_a} = \id_{W_a}.$$
\end{enumerate}

Denote by $\Rep(T)$ the category of representations of $T$, a morphism between two representations $(W,\rho)$ and $(W',\rho')$ being a family of morphisms $(f_a : W_a \to W'_a)_{a \in D_1}$
such that $\rho'_{a,b}(T_a f_b) = f_{ab} \rho_{a,b}$ for $(a,b) \in D_2$.


For $(W,\rho)$ representation of $T$, define an object $V_T(W,\rho)$ of $|T|$ by  $$V_T(W,\rho) = (W_i)_{i \in D_0} \in \Ob (|T|).$$

Convesely, for $X \in |T|$, define a representation $L_T X$ of $T$ by
$$L_T X = ((T_a X_{s(a)})_{a \in D_1}, ({\mu_{a,b}}_{X_{s(b)}})_{(a,b) \in D_2}).$$

\begin{thm}\label{thm-rep}
The assignments $(W,\rho) \mapsto W_T(W,\rho)$ and $X \mapsto L_T X$ define functors:
$$L_T : |T| \to \Rep(T) \blabla{and} V_T : \Rep(T) \to |T|.$$
Moreover, $L_T$ a left adjoint of $V_T$, with adjunction morphisms
\begin{align*}&\eta : \id_{|T|} \to V_T L_T, &\eps : L_T V_T \to \id_{\Rep(T)},\\
&\eta_X  = ({\eta_i}_{X_i})_{i \in D_0}, &\eps_{(W,\rho)} = (\rho_{a,s(a)})_{a \in D_1}.
\end{align*}
In particular, if $T$ is of action type and $D$ is a groupoid, then $U_T : \Rep(T) \to |T|$ is an equivalence.
\end{thm}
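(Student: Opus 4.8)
The plan is to treat the three assertions in order: functoriality of $L_T$ and $V_T$, the adjunction, and the equivalence in the action-type groupoid case. First I would check that $L_T$ and $V_T$ are well-defined functors. For $V_T$ this is immediate: for each $i \in D_0$ one has $\id_i : i \to i$, so $W_{\id_i} \in \Ob(\C_i)$ and $V_T(W,\rho) = (W_{\id_i})_{i \in D_0}$ is indeed an object of $|T|$, while $V_T$ sends a morphism $(f_a)_a$ of representations to $(f_{\id_i})_i$. For $L_T$ I would verify the two representation axioms for $L_T X = ((T_a X_{s(a)})_a,({\mu_{a,b}}_{X_{s(b)}})_{(a,b)})$: axiom~(1) is exactly the associativity axiom~(i) of the polyad evaluated at $X_{s(c)}$ (using that at an object $X$ the whiskering $(\mu_{a,b}\,\id_{T_c})$ reads ${\mu_{a,b}}_{T_c X}$ and $(\id_{T_a}\,\mu_{b,c})$ reads $T_a({\mu_{b,c}}_X)$), and axiom~(2) is the unit axiom~(ii) evaluated at $X_{s(a)}$. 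Functoriality on morphisms is then a routine check.

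Next I would establish the adjunction $L_T \dashv V_T$ through the two triangle identities, after checking naturality of the proposed unit $\eta_X=({\eta_i}_{X_i})_i$ and counit $\eps_{(W,\rho)}=(\rho_{a,\id_{s(a)}})_a$ (naturality of $\eta$ reduces to naturality of each $\eta_i$, and naturality of $\eps$ to the defining identity $\rho'_{a,b}(T_a f_b)=f_{ab}\rho_{a,b}$ of a morphism of representations, taken at $b=\id_{s(a)}$). The identity $(V_T\eps)(\eta V_T)=\id_{V_T}$ has $i$-th component $\rho_{\id_i,\id_i}\,{\eta_i}_{W_{\id_i}}$, which is $\id_{W_{\id_i}}$ by the unit axiom~(2) for representations; the identity $(\eps L_T)(L_T\eta)=\id_{L_T}$ has $a$-th component ${\mu_{a,\id_j}}_{X_j}\,T_a({\eta_j}_{X_j})$ with $j=s(a)$, which is $\id_{T_a X_j}$ by the unit axiom~(ii) of the polyad. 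This yields the adjunction with exactly the stated morphisms.

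Finally, for the equivalence I would show that, when $T$ is of action type and $D$ is a groupoid, both $\eta$ and $\eps$ are natural isomorphisms, so that $L_T\dashv V_T$ is an adjoint equivalence and $V_T$ (written $U_T$ in the statement) is an equivalence. The unit is easy: action type makes each $\eta_i$ invertible, hence each $\eta_X=({\eta_i}_{X_i})_i$ invertible. The crux is to prove that every counit component $\rho_{a,\id_{s(a)}}$ is an isomorphism, and I would deduce it from the stronger claim that $\rho_{a,b}$ is invertible for every composable pair $(a,b)$ (in a groupoid $a$ is automatically invertible), imitating the proof of Lemma~\ref{lem-act-inv}. Writing $a:j\to i$, applying axiom~(1) to the triple $(a,a^{-1},ab)$ gives
\[
\rho_{a,b}\,\bigl(T_a\rho_{a^{-1},ab}\bigr)=\rho_{\id_i,ab}\,{\mu_{a,a^{-1}}}_{W_{ab}},
\]
whose right-hand side is invertible: action type makes ${\mu_{a,a^{-1}}}_{W_{ab}}$ invertible, while axiom~(2) together with the invertibility of $\eta_i$ gives $\rho_{\id_i,ab}=\bigl({\eta_i}_{W_{ab}}\bigr)^{-1}$. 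Hence $\rho_{a,b}$ is a split epimorphism with some section $g$.

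Symmetrically, applying axiom~(1) to the triple $(a^{-1},a,b)$ shows that $\rho_{a^{-1},ab}$ is a split epimorphism, so $T_a\rho_{a^{-1},ab}$, and therefore $g$, is a split epimorphism as well. Being simultaneously a split mono (a section of $\rho_{a,b}$) and a split epi, $g$ is invertible, whence $\rho_{a,b}=g^{-1}$ is invertible; taking $b=\id_{s(a)}$ shows that each counit component is an isomorphism, so $\eps$ is a natural isomorphism and $V_T$ is an equivalence. I expect this last two-sided splitting argument, rather than the axiom bookkeeping of the first two steps, to be the main obstacle.
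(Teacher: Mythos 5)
Your proof is correct and follows essentially the same route as the paper: the functoriality and triangle identities are the same "straightforward computation," and for the equivalence you use the same candidate inverse of $\rho_{a,b}$ (the composite $W_{ab}\iso T_aT_{a^{-1}}W_{ab}\to T_aW_b$) that the paper writes down. The only difference is that you carefully justify that this candidate is a two-sided inverse via the split-mono/split-epi argument of Lemma~\ref{lem-act-inv}, a verification the paper merely asserts.
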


\begin{proof} By straightforward computation. The last observation deserves an explanation. If $T$ is of action type, then $\eta$ is an isomorphism, hence $L_T$ quasi-inverse to $V_T$ on the right.
If in addition $D$ is a groupoid, $\eps$ is invertible because the morphisms $\rho_a$ are invertible, hence $L_T$ is also quasi-inverse to $V_T$ on the left. The inverse of $\rho_{a,b} : T_a W_b \to W_{ab}$ is the compositum of the morphisms:
$$W_{ab} \iso T_a T_{a^{-1}} W_{ab} \labelto{T_a \rho_{a^{-1},ab}} T_a W_b.$$

\end{proof}

\begin{rem} Representations of polyads correspond with the representations of a Hopf category
considered in \cite{BCV}. For a monad,that is, a $*$-polyad, representations and modules coincide. They differ in general, for instance if $T$ is the polyad of the action of a group $G$ on a category $D$, $\Mod(T)$ is the equivariantization, whereas $\Rep(T)$ is equivalent to $\C$ by Theorem~\ref{thm-rep}. However representations can be seen as a modules via the notion of pull-back, see
Lemma~\ref{lem-mod-rep}.
Since the focus of the present work is on group actions, equivariantizations and generalizations thereof,
we will prefer modules to representations. In Section~\ref{sect-hopfmodrep}, we will see that Hopf modules can be generalized to the framework of polyads in two ways, either as modules endowed with a comodule structure (Hopf modules), or as representations endowed with a comodule structure (Hopf representations). The decomposition theorem for Hopf modules extends to both of these notions, but the Hopf representation version is more natural.
\end{rem}

\subsection{The lift of a polyad}

A polyad $T = (D,\C, T, \mu,\eta)$ is said to be $r$-exact if for each $i \in D_0$, $\C_i$ is a \re-category (\emph{i.e.} admits reflexive coequalizers) and for each $a : j \to i$ in  $D_1$, the functor $T_a : \C_j \to \C_i$ is \re-exact (\emph{i.e.} preserves reflexive coequalizers).

Let $T=(D,\C,T,\mu,\eta)$ be a \re-exact polyad. For $i \in D_0$, $(T_i,\mu_{i,i},\eta_i)$ is a r-exact monad on $\C_i$. Set $\tilde{\C_i} = {\C_i}_{T_i}$.
For $a : j \to i \in D_1$, $T_a : \C_j \to \C_i$ is a r-exact $T_i$-$T_j$-bimodule functor, and in view of Theorem~\ref{thm-eq-fun-bimod}, it lifts to a
r-exact functor $(T_a)^\sharp = T_a \odot_{T_j} ? \,: \tilde{\C}_j \to \tilde{\C}_i$.
Set $\tT_a = (T_a)^\sharp$.

By Lemma~\ref{lem-comp-tens}, for $(a,b) \in D_2$ we have $\tT_a \tT_b = (T_a \odot_{T_j} T_b)^\sharp$, so that $\mu_{a,b} : T_a T_b \to T_{ab}$ induces a natural transformation
$\tilde{\mu}_{a,b} : \tT_a \tT_b \to \tT_{ab}$.
Similarly, for $i \in D_0$, $\eta_i: \id_{\C_i} \to T_i$ induces a natural transformation $\tilde{\eta}_i : \id_{{\C_i}_{T_i}} \to \tT_i$.

\begin{prop} Let $T = (D,\C,T,\mu,\eta)$ be a \re-exact polyad. Then with the above notations $\tT = (D,\tilde{\C},\tT,\tilde{\mu},\tilde{\eta})$
is an \re-exact polyad. Moreover, $\tilde{\eta}_i$ is the identity for $i \in D_0$.
\end{prop}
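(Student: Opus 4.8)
The plan is to obtain the polyad structure of $\tT$ by transporting that of $T$ across the equivalence of Theorem~\ref{thm-eq-fun-bimod}. The conceptual content is that $?^\sharp$, together with the canonical isomorphisms $(M\odot_T N)^\sharp \iso M^\sharp N^\sharp$ of Lemma~\ref{lem-comp-tens}, organizes the passage from bimodule functors to functors between module categories into a homomorphism of bicategories: it is fully faithful, sends $\odot$ to composition of functors, and sends a regular bimodule $T_i$ to (a functor canonically isomorphic to) the identity. Since a $D$-polyad is exactly a lax functor from $D$ to the bicategory of categories, the axioms for $\tT$ follow formally from those for $T$ once this compatibility is made precise. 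Before that, I dispose of \re-exactness: for each $i$, $\widetilde{\C}_i = {\C_i}_{T_i}$ is an \re-category because $T_i$ is an \re-exact monad (whence also $U_{T_i}$ is \re-exact), and each $\tT_a = (T_a)^\sharp = T_a \odot_{T_j}?$ is \re-exact since $?^\sharp$ takes values in $\Fun_r$ by Theorem~\ref{thm-eq-fun-bimod}. Thus once $\tT$ is shown to be a polyad it is automatically \re-exact.

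Next I pin down the structure maps. For $(a,b)\in D_2$ with $b : k\to j$, $a : j\to i$, the right $T_j$-action on $T_a$ is $\mu_{a,\id_j}$ and the left $T_j$-action on $T_b$ is $\mu_{\id_j,b}$, so the reflexive pair defining $T_a\odot_{T_j}T_b$ is $(\mu_{a,\id_j}\,T_b,\ T_a\,\mu_{\id_j,b})$. Axiom (i) for $T$ applied to the triple $(a,\id_j,b)$ gives $\mu_{a,b}(\mu_{a,\id_j}\,T_b) = \mu_{a,b}(T_a\,\mu_{\id_j,b})$, so that $\mu_{a,b}$ coequalizes this pair and factors through a morphism $\overline{\mu}_{a,b} : T_a\odot_{T_j}T_b \to T_{ab}$, which is a bimodule morphism (again by naturality and axiom (i)). I then define $\tilde{\mu}_{a,b}$ as $(\overline{\mu}_{a,b})^\sharp$ read through the isomorphism $\tT_a\tT_b\iso(T_a\odot_{T_j}T_b)^\sharp$ of Lemma~\ref{lem-comp-tens}, and $\tilde{\eta}_i$ as the unit coherence of $?^\sharp$ at $T_i$, built from $\eta_i$.

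With these identifications, the two polyad axioms for $\tT$ are verified by transport. Since $?^\sharp$ is an equivalence it is faithful, so an equality of natural transformations between iterated composites $\tT_a\tT_b\tT_c\to\tT_{abc}$ holds as soon as the corresponding equality of bimodule morphisms $T_a\odot_{T_j}T_b\odot_{T_k}T_c\to T_{abc}$ does; and that equality, $\overline{\mu}_{ab,c}(\overline{\mu}_{a,b}\odot\id) = \overline{\mu}_{a,bc}(\id\odot\overline{\mu}_{b,c})$, is just axiom (i) for $T$ pushed through the (epimorphic) coequalizer maps. Axiom (ii) is treated the same way from axiom (ii) for $T$, using the unit isomorphisms $T_a\odot_{T_j}T_j\iso T_a$ and $T_i\odot_{T_i}T_a\iso T_a$. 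I expect the real work here, and the principal obstacle, to be the bookkeeping of the canonical isomorphisms: one must check that those of Lemma~\ref{lem-comp-tens} are coherent (satisfy the associativity pentagon and the unit triangles), so that transporting an equality of $\overline{\mu}$'s genuinely yields the stated equality of $\tilde{\mu}$'s. It is convenient that the associativity constraint for $\odot$ itself is forced by Lemma~\ref{lem-comp-tens} together with the strict associativity of functor composition, which keeps this coherence check finite and formal rather than requiring an independent construction of associators.

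Finally, for the assertion that $\tilde{\eta}_i$ is the identity, I exploit the freedom in choosing coequalizers. For a $T_i$-module $(M,\rho)$, the cofork defining $(T_i)^\sharp M = T_i\odot_{T_i}U_{T_i}M$ is, in $\widetilde{\C}_i$, the reflexive exact cofork $F_{T_i}T_iU_{T_i}M \rightrightarrows F_{T_i}U_{T_i}M \labelto{\rho} M$ of Lemma~\ref{lem-exa-tensorable}; choosing $\rho$ itself as the coequalizer identifies $\tT_i = (T_i)^\sharp$ with $\id_{\widetilde{\C}_i}$ on the nose. Under this choice the relevant coherence component of $\tilde{\eta}_i$ at $M$ is the composite of $\eta_{i,M} : M\to T_iM$ with the coequalizer map $\rho$, namely $\rho\,\eta_{i,M}$, which equals $\id_M$ by the unit axiom for the module $(M,\rho)$. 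Hence $\tilde{\eta}_i = \id$, as claimed.
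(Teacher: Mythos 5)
Your proof is correct and follows the same route as the paper, whose own proof is only a two-line sketch ("the structure of $\tT$ is induced by that of $T$ by quotienting", and $\tT_i$ is the identity monad so $\tilde\eta_i=\id$); you have simply filled in the details of that sketch, including the correct identification of the coequalized pair via axiom (i) applied to $(a,\id_j,b)$ and the computation $\rho\,\eta_{i,M}=\id_M$ for the unit.
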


\begin{defi}The polyad $\tT$ is called the \emph{lift of the polyad $T$}. \end{defi}

\begin{proof}
The axioms of a polyad for $\tT$ result from the axioms for $T$, since the structure of $\tT$ is induced by that of $T$ by quotienting.
Moreover $\tT_i$ is the identity monad of ${\C_i}_{T_i}$, and in particular
$\tilde{\eta}_i$ is the identity.
\end{proof}


Given a $\tT$-module $(X,\rho)$, we define a $T$-module $(X^\flat,\rho^\flat)$ as follows:
for $i \in D_0$, $X^\flat_i = U_{T_i}(X_i)$, and for $j \labelto{a} i$ in $D_1$,
$\rho^\flat_a$ is the compositum of
$$T_a U_{T_j} X_j \iso U_{T_i} \tT_a F_{T_j} U_ {T_j} X_j \labelto{\tT_a(\rho_j)} U_{T_i} \tT_a X_j \labelto{\rho_a} U_{T_i} X_i.$$

\begin{prop} If $T$ is an \re-exact polyad, the assignment $X \mapsto X^\flat$ defines an isomorphism of categories $$\Mod(\tT) \iso \Mod(T).$$
\end{prop}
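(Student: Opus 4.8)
The plan is to construct a functor in the opposite direction, $(Y,\tau) \mapsto (Y^\sharp,\tau^\sharp)$ from $\Mod(T)$ to $\Mod(\tT)$, and check that it is inverse to $X \mapsto X^\flat$. The guiding principle is that the entire construction of $\tT$ was built by applying the equivalence $?^\flat \colon \Fun_r(\D_P,\C_T) \to \BimodFun_r(T,P)$ of Theorem~\ref{thm-eq-fun-bimod} componentwise, so the isomorphism $\Mod(\tT) \iso \Mod(T)$ should be this same equivalence, assembled over all objects and morphisms of $D$. Concretely, I would proceed as follows.

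\begin{proof}
We construct an inverse to $X \mapsto X^\flat$. Let $(Y,\tau)$ be a $T$-module; we produce a $\tT$-module $(Y^\sharp,\tau^\sharp)$. For $i \in D_0$, the pair $(Y_i,\tau_i)$ consisting of the object $Y_i \in \C_i$ and the action $\tau_i = \tau_{\id_i} \colon T_i Y_i \to Y_i$ is, by axioms (i)--(ii) of a $T$-module specialized to $a=b=\id_i$, exactly a $T_i$-module; set $Y^\sharp_i = (Y_i,\tau_i) \in \tilde{\C}_i$. For $j \labelto{a} i$ in $D_1$, we must give $\tau^\sharp_a \colon \tT_a Y^\sharp_j \to Y^\sharp_i$ in $\tilde{\C}_i$. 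Recall $\tT_a Y^\sharp_j = T_a \odot_{T_j} Y^\sharp_j$ is the reflexive coequalizer of the pair $(r\,U_{T_j}Y^\sharp_j,\ T_a\,\tau_j)$; the component $\tau_a \colon T_a Y_j \to Y_i$ of the $T$-module structure coequalizes this pair precisely because of axiom (i) for $(a,\id_j)$ versus the right $T_j$-action on $T_a$, so it factors through the coequalizer to give a morphism in $\C_i$, which one checks is $T_i$-linear using axiom (i) for $(\id_i,a)$; this is $\tau^\sharp_a$. One then verifies directly that $(Y^\sharp,\tau^\sharp)$ satisfies the $\tT$-module axioms: these follow from the $T$-module axioms together with the defining compatibilities of $\tilde{\mu}$ and $\tilde{\eta}$, using in an essential way that $\tilde{\eta}_i$ is the identity (Proposition above), so that axiom (ii) for $\tT$ reduces to the statement that $\tau_i$ is a genuine $T_i$-action.

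It remains to check that $?^\flat$ and $?^\sharp$ are mutually inverse. In one direction, starting from $(X,\rho) \in \Mod(\tT)$, the object $X^\flat_i = U_{T_i}(X_i)$ carries the $T_i$-action obtained from the $\tT$-module structure, and applying $?^\sharp$ returns the $T_i$-module $(U_{T_i}X_i, \ldots)$, which is canonically $X_i$ since $X_i$ was already a $T_i$-module and $U_{T_i}$ followed by the free-forgetful reconstruction recovers it up to the canonical isomorphism $F_{T_j} \odot_{T_j} U_{T_j} \iso \id$ of Lemma~\ref{lem-exa-tensorable}. In the other direction, starting from $(Y,\tau) \in \Mod(T)$, forming $Y^\sharp$ and then $(Y^\sharp)^\flat$ returns $U_{T_i}(Y_i,\tau_i) = Y_i$ on objects and recovers $\tau_a$ on morphisms, because the composite defining $\rho^\flat_a$ unwinds, via the splitting $(F_{T_j}\eta,\ldots)$ of the relevant cofork, to the original factorization of $\tau_a$ through the coequalizer. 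Both round-trips are the identity up to the canonical isomorphisms furnished by Theorem~\ref{thm-eq-fun-bimod} and Lemma~\ref{lem-exa-tensorable}, and these isomorphisms are natural in $(X,\rho)$ and $(Y,\tau)$ respectively, so $?^\flat$ is an isomorphism of categories.
\end{proof}

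\noindent \textbf{Main obstacle.} The genuinely delicate point is the well-definedness and functoriality of $\tau^\sharp_a$, i.e.\@ verifying that the component $\tau_a$ coequalizes the reflexive pair defining $T_a \odot_{T_j} Y^\sharp_j$ and that the resulting factored map is $T_i$-linear and compatible with composition. This is where the polyad associativity axiom (i) must be matched precisely against the bimodule-functor structure on $T_a$ that underlies the definition $\tT_a = (T_a)^\sharp$. The rest is bookkeeping: once one recognizes that $?^\flat$ on modules is the object-level shadow of the equivalence of Theorem~\ref{thm-eq-fun-bimod}, the inverse and the two round-trip identities are forced, and the only real content is checking that the coequalizer factorizations are respected by $\mu$, $\eta$ across all composable pairs. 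I expect no difficulty from exactness, since \re-exactness of $T$ guarantees all the needed reflexive coequalizers exist and are preserved.
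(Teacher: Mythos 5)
Your proposal is correct and follows essentially the same route as the paper: the paper's proof likewise constructs the inverse $(Y,r)\mapsto (Y^\sharp,r^\sharp)$ by taking $Y^\sharp_i=(Y_i,r_i)$ and letting $r_a$ induce $r^\sharp_a$ on the coequalizer $T_a\odot_{T_j}Y^\sharp_j$, then verifies the two assignments are mutually inverse. Your write-up merely fills in more of the verifications (coequalizing, $T_i$-linearity, round-trips) that the paper leaves as ``one verifies''.
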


\begin{proof}
If $(Y,r)$ is a $T$-module, define a $\tT$-module $(Y^\sharp,r^\sharp)$ as follows. For $i \in D_0)$,  $Y^\sharp_i$ is the $T_i$-module $(Y_i,r_i)$.
For $j \labelto{a} i$ in $D_1$, the morphism $r_a: T_a Y_j \to Y_i$ induces a morphism $r^\sharp_a$ of $T_i$-modules $\tT_a Y^\sharp_j = T_a \odot_{T_j} Y^\sharp_j \to Y^\sharp_i$.
One verifies that the assignment $Y \to Y^\sharp$ so defined is inverse to the assignment $X \mapsto X^\flat$.
\end{proof}

\subsection{Comonoidal polyads}

A \emph{comonoidal polyad} is a polyad $T=(D,\C,T,\mu,\eta)$ endowed with a monoidal structure on the category $\C_i$ for each $i \in D_0$, and of a comonoidal structure on the functor $T_a$ for each morphism $a$ in $D$, in such a way that $\mu$ and $\eta$ are comonoidal, that is, the natural transformations $\mu_{a,b}$ for $(a,b) \in D_2$ and $\eta_i$ for $i \in D_0$ are comonoidal. For instance, the compatibility between the product and the comonoidal structure is expressed by the commutativity of the diagram:
$$
\xymatrix @C=5pc{
T_a T_b(X\otimes Y)\ar[d]_{{\mu_{a,b}}_{X \otimes Y}} \ar[r]^{T_aT^2_b(X,Y)}& T_a(TbX \otimes T_bY) \ar[r]^{T^2_a(T_bX,T_bY)} & T_a T_b X \otimes T_a T_b Y \ar[d]^{{\mu_{a,b}}_X \otimes {\mu_{a,b}}_Y}\\
T_{ab}(X \otimes Y) \ar[rr]_{T^2_{ab}(X,Y)} && T_{ab}X \otimes T_{ab} Y.
}
$$
for $k \labelto{b} j \labelto{a} i$ in $D_2$ and $X,Y$ in $\C_k$.

\begin{defi} A comonoidal polyad $T$ is said to \emph{preserve tensor products} (resp. \emph{tensor units}) if for each $a \in D_1$, the comonoidal functor $T_a$ preserves tensor products (resp. tensor units). A \emph{strong comonoidal polyad} is a comonoidal polyad which preserves tensor products and tensor units.
\end{defi}

 A comonoidal polyad $T = (D,\C,T,\mu,\eta)$ is \emph{\re-exact} if it is \re-exact as a polyad and the categories $\C_i$ are monoidal \re-categories (that is, their tensor products are \re-exact in each variable). If such is the case, $T$ is \emph{transitive} if for each $j \labelto{a} i$ in $D_1$, $T_a \un$ is a transitive coalgebra (see Definition~\ref{def-trans}) in the monoidal r-category $\C_i$.

\begin{prop}\label{prop-comopo} Let $T$ be an \re-exact comonoidal polyad. Then
\begin{enumerate}[(1)]
\item $\tT$ is an \re-exact comonoidal  polyad;
\item the canonical equivalence of categories $?^\flat : \Mod(\tT) \to \Mod(T)$ is a srtict monoidal equivalence.
\end{enumerate}
\end{prop}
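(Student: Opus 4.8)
The plan is to build on the lift machinery already established for ordinary polyads and promote it to the comonoidal setting by invoking the comonoidal analogues of the bimodule-functor equivalence. Recall that the lift $\tT$ was constructed functor-by-functor: for $a : j \to i$ we set $\tT_a = (T_a)^\sharp = T_a \odot_{T_j} ?$, using that $T_a$ is an \re-exact $T_i$-$T_j$-bimodule functor. For part (1), the first step is to observe that when $T$ is an \re-exact \emph{comonoidal} polyad, each monad $T_i$ is an \re-exact comonoidal monad on the monoidal \re-category $\C_i$, so by the lemma of Section~\ref{sect-comohomo} each $\tilde\C_i = {\C_i}_{T_i}$ is itself a monoidal \re-category. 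Moreover each $T_a$ is now an \re-exact \emph{comonoidal} $T_i$-$T_j$-bimodule functor, so Theorem~\ref{thm-eq-comon-bimod} (rather than its non-comonoidal cousin) supplies a canonical comonoidal structure on $\tT_a = (T_a)^\sharp$. Thus each $\tT_a$ is automatically a comonoidal functor between monoidal \re-categories.

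The second step is to check that the induced transformations $\tilde\mu_{a,b}$ and $\tilde\eta_i$ are comonoidal. Here I would appeal to Lemma~\ref{lem-comon-tens}, the comonoidal refinement of Lemma~\ref{lem-comp-tens}: it gives a canonical \emph{comonoidal} isomorphism $(T_a \odot_{T_j} T_b)^\sharp \iso \tT_a \tT_b$, so that the comonoidal natural transformation $\mu_{a,b} : T_a T_b \to T_{ab}$ is carried, under the equivalence $?^\sharp$ of Theorem~\ref{thm-eq-comon-bimod}, to a comonoidal natural transformation $\tilde\mu_{a,b} : \tT_a \tT_b \to \tT_{ab}$. Comonoidality of $\tilde\eta_i$ is immediate since $\tilde\eta_i$ is the identity of the identity monad on $\tilde\C_i$ (as already noted in the proposition defining the lift). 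The associativity and unit axioms for $\tilde\mu,\tilde\eta$ were established for $\tT$ as a plain polyad, and since they are now equalities between comonoidal natural transformations the comonoidal polyad axioms hold. This settles (1).

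For part (2), the strategy is to upgrade the isomorphism $?^\flat : \Mod(\tT) \iso \Mod(T)$ already constructed for plain polyads to a strict monoidal isomorphism. I would first recall that a $\tT$-module $(X,\rho)$ produces a $T$-module $(X^\flat,\rho^\flat)$ with $X^\flat_i = U_{T_i}(X_i)$. Since each $U_{T_i} : \tilde\C_i \to \C_i$ is comonoidal \emph{strict} (this is exactly the universal property of the monoidal structure on modules over a comonoidal monad), the monoidal structures on $\Mod(T)$ and $\Mod(\tT)$ — both defined componentwise from the monoidal structures of the $\C_i$ and $\tilde\C_i$ respectively — match on the nose under $?^\flat$: the tensor product of two $\tT$-modules is computed in the $\tilde\C_i$, and applying the strict comonoidal $U_{T_i}$ componentwise yields precisely the tensor product of the images in $\Mod(T)$. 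The unit objects correspond likewise. Hence $?^\flat$ is strict monoidal.

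The main obstacle, and the only point requiring genuine care, is verifying that the half-defined comonoidal structure on $\tT_a$ coming from Theorem~\ref{thm-eq-comon-bimod} is compatible with the quotient maps $T_a \to \tT_a$ so that $\tilde\mu_{a,b}$ really is comonoidal — in other words, that the comonoidal structure is preserved by the passage to the coequalizer defining $\odot_{T_j}$. This is handled uniformly by Lemma~\ref{lem-comon-tens}: the whole difficulty is packaged into the statement that $?^\sharp$ is a functor valued in \emph{comonoidal} \re-exact functors that respects $\odot$ up to comonoidal isomorphism. Once that lemma is granted, every comonoidality claim for $\tT$ is a formal consequence of the corresponding claim for $T$ together with the equivalence of Theorem~\ref{thm-eq-comon-bimod}, and the proof reduces to bookkeeping that the structure transformations are the images of those of $T$ under $?^\sharp$.
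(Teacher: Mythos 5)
Your argument is correct and follows exactly the route the paper intends: the paper's own proof is a one-line deferral to ``the basic properties of comonoidal monads and comonoidal bimodule functors'' of Sections~\ref{sect-comohomo} and~\ref{sect-cobifun}, and your write-up simply unpacks those properties --- Theorem~\ref{thm-eq-comon-bimod} for the comonoidal structure on each $\tT_a$, Lemma~\ref{lem-comon-tens} for the comonoidality of $\tilde\mu_{a,b}$, and strictness of the $U_{T_i}$ for part (2). Nothing to object to; you have supplied the details the paper omits.
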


\begin{proof}
Results from the basic properties of comonoidal monads and comonoidal bimodule functors, see Sections~\ref{sect-comohomo} and~\ref{sect-cobifun}.
\end{proof}

\begin{defi}
Given a comonoidal polyad $T =(D,\C,T,\mu,\eta)$, the \emph{left (resp. right) fusion operator of $T$}, denoted $H^l$ (resp $H^r$) is a collection of natural tranformations indexed by  composable pairs $(a,b) \in D_2$,
 $k \labelto{b} j \labelto{a} i$, defined as follows:
$$H^l_{a,b}(X,Y) =  (T_a X \otimes {\mu_{a,b}}_Y){T_a^2}(X,T_b Y) : T_a(X \otimes T_b Y) \to T_a X \otimes T_{ab} Y$$
and
$$H^r_{a,b}(Y,X) = ({\mu_{a,b}}_Y \otimes T_a X) {T_a^2}(T_b Y,X) : T_a(T_b Y \otimes X) \to T_{ab} Y \otimes T_a X$$
for $X$, $Y$ objects of $\C_k$ and $\C_j$ respectively.
\end{defi}

\begin{prop}\label{prop-fusion} The fusion operators of a comonoidal polyad $T$ satisfy:
\begin{enumerate}[(1)]
\item for $l \labelto{c} k \labelto{b} j \labelto{a} i$ in $D$ and $X \in \Ob(\C_j), Y \in \Ob(\C_l)$:
$$H^l_{a,bc}(X,Y)T_a(X \otimes {\mu_{b,c}}_Y) = (T_a X \otimes {\mu_{ab,c}}_Y)H^l_{a,b}(X,T_c Y),$$
$$H^r_{a,bc}(Y,X)T_a({\mu_{b,c}}_Y \otimes X) = ({\mu_{ab,c}}_Y \otimes T_a X)H^r_{a,b}(T_c Y,X),$$
\item for $j \labelto{a} i$ in $D_1$, $X, Y \in \Ob(\C_i)$:
$$H^l_{a,j}(X,Y)T_a(X \otimes {\eta_j}_{Y})=T^2_a(X,Y);$$
$$H^r_{a,j}(X,Y)T_a({\eta_j}_{X} \otimes Y)=T^2_a(X,Y);$$
\item for $j \labelto{a} i$ in $D_1$, $X\in \Ob(\C_i), Y \in \Ob(\C_j)$:
$$H^l_{i,a} {\eta_i}_{X \otimes T_aY} = {\eta_i}_X \otimes T_a Y;$$
$$H^r_{i,a} {\eta_i}_{T_aY \otimes X} = T_a Y \otimes {\eta_i}_X;$$
\item for  $k \labelto{b} j \labelto{a} i$ in $D$ and $X; X' \in \Ob(\C_j), Y \in \Ob(\C_k)$:
$$(T^2_a(X,X') \otimes T_{ab}Y)H^l_{a,b}(X \otimes X',Y) = (T_a X \otimes H^l_{a,b}(X',Y))T^2_a(X, X' \otimes T_bY);$$
$$( T_{ab}Y \otimes T^2_a(X,X'))H^r_{a,b}(Y,X \otimes X') = (H^r_{a,b}(X,Y) \otimes T_a X')T^2_a(T_bY\otimes X,X');$$
\item
for $k \labelto{b} j \labelto{a} i$ in $D$ and $X \in\Ob(\C_j)$:
$$(T_a X \otimes T^0_{ab})H^l_{a,b}(X,\un) = T_a(X \otimes T^0_b);$$
$$(T^0_{ab} \otimes T_a X)H^r_{a,b}(\un,X) = T_a(T^0_b \otimes X);$$
\item
for  $k \labelto{b} j \labelto{a} i$ in $D$ and $X \in\Ob(\C_k)$:
$$(T^0_a \otimes T_{ab} X) H^l_{a,b}(\un,X) = {\mu_{a,b}}_X.$$
$$(T_{ab} X \otimes T^0_a) H^r_{a,b}(X,\un) = {\mu_{a,b}}_X.$$
\item for $l \labelto{c} k \labelto{b} j \labelto{a} i$ in $D$ and $X \in \Ob(\C_j), Y \in \Ob(\C_l)$, $Z \in \Ob(\C_k)$, the \emph{pentagon equations}:

$$(H^l_{a,b}(X,Y) \otimes T_{abc}Z) H^l_{a,bc}(X \otimes T_b Y,Z) T(X\otimes H^l_{b,c}(Y,Z))$$
$$\qquad\qquad\qquad\qquad = (T_aX \otimes H^l_{ab,c}(Y,Z) ) H^l_{a,b}(X,Y \otimes T_cZ)$$
$$(T_{abc}Z \otimes H^r_{a,b}(Y,Z))H^r_{a,bc}(Z, T_b Y \otimes X) T(H^r_{b,c}(Z,Y) \otimes X)$$
$$\qquad\qquad\qquad\qquad  = (H^r_{ab,c}(Z,Y) \otimes T_aX)H^r_{a,b}(T_cZ \otimes Y,X)$$

\end{enumerate}
\end{prop}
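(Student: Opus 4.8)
The plan is to prove all seven identities by unwinding the definitions of $H^l_{a,b}$ and $H^r_{a,b}$ in terms of the comonoidal structure maps $T^2_a$ and the product $\mu_{a,b}$, and then reducing each equation to a combination of the associativity and unit axioms for the comonoidal structure of the functors $T_a$ (Section~\ref{sect-comonofunctor}), the polyad axioms (i) and (ii), and the comonoidality of $\mu$ and $\eta$. By the left/right symmetry built into the definitions (each $H^r$ statement is the mirror image of the corresponding $H^l$ statement under reversal of the tensor product), I would only treat the $H^l$ equations in detail and remark that the $H^r$ versions follow by the same computation read in the opposite monoidal category. All verifications are naturality-plus-diagram-chase arguments, so throughout I would draw each side as a pasted composite and show the two composites agree morphism by morphism.

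I would organize the routine identities first. For (2) and (3), the point is that $H^l_{a,j}$ or $H^l_{i,a}$ involves an \emph{identity} morphism $\id_{\C}$ appearing as one of the composable legs, so the relevant instance of $\mu$ collapses via the unit axiom (ii) of the polyad, and the claim reduces to the comonoidal unit axiom for $T_a$ together with the comonoidality of $\eta_i$. Identities (5) and (6) are the specializations of the fusion operator where one tensor slot is the unit object $\un$: here I would substitute $Y=\un$ (resp. $X=\un$) into the defining formula, use the comonoidal counit axiom $(F X \otimes F^0)F^2(X,\un)=\id$ to absorb the $T^0$-terms, and read off the claim from the definition of $\mu_{a,b}$ on $\un$ (using that $\mu$ is comonoidal). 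Identity (4) is pure naturality of $T^2_a$ combined with the hexagon-type associativity axiom for the comonoidal functor $T_a$; it does not involve $\mu$ in an essential way and is the coassociativity of $T^2_a$ rewritten through the definition of $H^l$.

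The genuinely substantive parts are (1) and the pentagon equation (7). For (1), I would expand both sides using $H^l_{a,b}(X,Y)=(T_a X \otimes {\mu_{a,b}}_Y)T^2_a(X,T_bY)$ and the compatibility of $\mu$ with composition, namely axiom (i): $\mu_{ab,c}(\mu_{a,b}\,\id_{T_c})=\mu_{a,bc}(\id_{T_a}\,\mu_{b,c})$. The naturality of $T^2_a$ applied to the morphism ${\mu_{b,c}}_Y : T_bT_cY \to T_{bc}Y$ moves $T_a({\mu_{b,c}}_Y)$ across the $T^2_a$, after which the two $\mu$-legs can be recombined by axiom (i), yielding the right-hand side. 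For the pentagon (7), I expect this to be the \textbf{main obstacle}: it is the polyad analogue of the pentagon axiom for the fusion operator of a comonoidal monad, and it encodes the simultaneous coherence of three composable morphisms $a,b,c$ with the comonoidal structure. I would prove it by writing the large composite as a single pasted diagram in $\C_i$ and factoring it through the three-fold comonoidal coassociativity of $T_a$, then applying axiom (i) twice (for the pairs yielding $T_{abc}$ by two different bracketings) to match the two bracketings $H^l_{a,bc}\circ(\cdots\otimes H^l_{b,c})$ and $(\cdots\otimes H^l_{ab,c})\circ H^l_{a,b}$; the bookkeeping of which $\mu$ and which $T^2$ acts on which tensorand is where care is required, but no new principle beyond associativity of $T^2_a$ and of $\mu$ enters. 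Since every step is an equality of natural transformations built from the polyad and comonoidal axioms, the whole proposition follows by these direct verifications.
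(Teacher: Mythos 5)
Your proposal is correct and matches the paper, which simply states that these formulae ``result from the axioms by straightforward computation'': each identity follows, exactly as you describe, from the definition of the fusion operators, naturality and (co)associativity/counit axioms of $T^2_a$, $T^0_a$, the polyad axioms for $\mu$ and $\eta$, and the comonoidality of $\mu$ and $\eta$. Your outline is in fact more detailed than the paper's proof; the only tiny slip is that item (2) needs naturality of $T^2_a$ plus the polyad unit axiom rather than comonoidality of $\eta$ (which is what item (3) uses), but this does not affect the validity of the argument.
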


\begin{proof}
These formulae result from the axioms by straightforward computation.
\end{proof}

\subsection{Hopf polyads}

\begin{defi}
A \emph{left (resp. right) Hopf polyad} is a comonoidal polyad whose left (resp. right) fusion operator is an isomorphism. A \emph{Hopf polyad} is a comonoidal polyad both of whose
fusion operators are invertible.
\end{defi}

\begin{exa}
A Hopf monad can be viewed as a Hopf polyad with source $*$.
\end{exa}

\begin{rem}
An action-type comonoidal polyad which preserves tensor products is a Hopf polyad (by definition of the fusion operators).
\end{rem}

\begin{thm}\label{th-hopf}
If $T$ is an \re-exact left Hopf (resp. right Hopf, resp. Hopf) polyad, so is its lift $\tT$.
\end{thm}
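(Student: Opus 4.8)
The statement is that $r$-exactness of the Hopf property passes to the lift $\tT$. I would prove this by showing that the fusion operators $\tilde H^l_{a,b}$, $\tilde H^r_{a,b}$ of $\tT$ are precisely the lifts (in the sense of Theorem~\ref{thm-eq-comon-bimod} and Lemma~\ref{lem-comon-tens}) of the fusion operators $H^l_{a,b}$, $H^r_{a,b}$ of $T$, and then invoke the fact that the $?^\sharp$ construction is (part of) an equivalence of categories, so it sends isomorphisms to isomorphisms and reflects them as well. Concretely, I would treat the left case, the right case being symmetric.

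\textbf{Step 1: identify the lifted fusion operator.} Fix a composable pair $k \labelto{b} j \labelto{a} i$ in $D_2$. The operator $H^l_{a,b}$ is, for each fixed first variable, a morphism of the functors $T_a(? \otimes T_b ?) \to T_a(?) \otimes T_{ab}(?)$ relating compositions and tensor products of the bimodule functors $T_a$, $T_b$. The key observation is that each of the three pieces entering $H^l_{a,b}$ — the comonoidal structure $T_a^2$, the product $\mu_{a,b}$, and the tensor product of $\C_i$ — lifts: $T_a^2$ lifts because $T_a$ is a comonoidal bimodule functor and $(T_a)^\sharp = \tT_a$ is comonoidal by Proposition~\ref{prop-comopo}(1); $\mu_{a,b}$ lifts to $\tilde\mu_{a,b}$ by construction of $\tT$; and the tensor products of $\tilde\C_i = {\C_i}_{T_i}$ are the lifted tensor products via the strict comonoidal forgetful functors $U_{T_i}$. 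Using the compatibility of $?^\sharp$ with composition (Lemma~\ref{lem-comon-tens}) and with the comonoidal structures, I would check that $\widetilde{H^l_{a,b}}$, the natural transformation induced on the lifts, coincides with the fusion operator $\tilde H^l_{a,b}$ of $\tT$ computed from $\tT_a^2$ and $\tilde\mu_{a,b}$ by the defining formula. This is a diagram-chase through the tensor-product-over-$T_j$ coequalizers, using that $U_{T_i}$ is conservative and comonoidal strict so that it detects and reflects the relevant identities.

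\textbf{Step 2: transport invertibility.} Once the two operators are matched, invertibility transfers immediately. If $T$ is left Hopf, each $H^l_{a,b}$ is an isomorphism of $r$-exact comonoidal bimodule functors; applying the quasi-inverse $?^\sharp$ of the equivalence in Theorem~\ref{thm-eq-comon-bimod} sends it to an isomorphism, hence $\tilde H^l_{a,b}$ is invertible and $\tT$ is left Hopf. The right and two-sided cases follow by the same argument, the right case using $H^r$ and its symmetric matching. Conversely one could note the equivalence reflects isomorphisms, but only the stated direction is needed.

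\textbf{The main obstacle.} The substantive work is Step~1: verifying that the lift of $H^l_{a,b}$ under $?^\sharp$ is genuinely the fusion operator of $\tT$, rather than merely some natural transformation between the right functors. This requires being careful that the comonoidal structure $\tT_a^2$ produced by Proposition~\ref{prop-comopo} is computed from $T_a^2$ exactly as in the definition of $H^l$, and that passing $\mu_{a,b}$ to $\tilde\mu_{a,b}$ commutes with forming the tensor-over-$T_j$ coequalizer that defines $\tT_a\tT_b$. I expect this to reduce, after using the conservativity and $r$-exactness of the forgetful functors, to the observation that all the defining data of the fusion operator are stable under the lift construction — a formal but somewhat delicate bookkeeping argument rather than a new idea.
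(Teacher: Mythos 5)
Your overall strategy is the same as the paper's, just phrased through the equivalence of Theorem~\ref{thm-eq-comon-bimod}: the paper proves an auxiliary lemma saying that a natural transformation between \re-exact functors on a category of modules is invertible as soon as it is invertible on free modules, reduces via conservativity of $U_{T_i}$ to checking $U_{T_i}\tilde{H}^l_{a,b}(F_{T_j}X,F_{T_k}Y)$, and then identifies that morphism with $H^l_{a,b}(X,Y)$ by an explicit diagram. Your Step 2 is exactly that reduction (the $?^\flat/?^\sharp$ equivalence plays the role of the free-module lemma), and your Step 1 is the identification. So the route is not genuinely different; the question is only whether your Step 1 goes through as you describe it.

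Here there is a real issue with your sketch. You present the identification of $\tilde{H}^l_{a,b}$ with the lift of $H^l_{a,b}$ as ``formal bookkeeping,'' on the grounds that each constituent of the fusion operator ($T^2_a$, $\mu_{a,b}$, the tensor product) is ``stable under the lift construction.'' That is not quite true, and the point is visible in the paper's diagram: to compare $\tT_a(F_{T_j}X\otimes \tT_bF_{T_k}Y)$ with $T_a(X\otimes T_bY)$ one must first recognize $F_{T_j}X\otimes \tT_bF_{T_k}Y$ as the free $T_j$-module on $X\otimes T_bY$, and the isomorphism doing this is precisely $H^l_{j,b}(X,Y):T_j(X\otimes T_bY)\to T_jX\otimes T_bY$ --- a unit-component fusion operator, invertible only because $T$ is assumed left Hopf. (Note $\tT_bF_{T_k}Y$ has underlying object $T_bY$ with $T_j$-action $\mu_{j,b}$, not $T_jT_bY$; the source of $\tilde H^l$ on free modules is therefore not the sharp of $X\mapsto T_a(X\otimes T_bY)$ by pure naturality.) So the Hopf hypothesis is consumed already in your Step 1, not only in Step 2, and a write-up that treats Step 1 as hypothesis-free bookkeeping would stall exactly there. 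There is also a secondary bookkeeping point you should make explicit: $H^l_{a,b}$ is a transformation in two variables, and ``fixing the first variable'' must mean fixing an object of $\tilde{\C}_j$ (then reducing it to a free module as well, e.g.\ by working over the product monad $T_j\times T_k$ on $\C_j\times\C_k$), since controlling $\tilde H^l_{a,b}(M,-)$ for arbitrary $M$ from data indexed by objects of $\C_j$ requires the same free-module reduction in that variable. With these two corrections your argument closes up and coincides with the paper's proof.
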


\begin{proof}
Let us prove the left-handed version. Assume $T$ is left Hopf. The left fusion operator of $\tT$:
$$\tilde{H^l}_{a,b}(M,N) : \tT_a(M \otimes \tT_b N) \to \tT_a M \otimes \tT_{ab} N \quad \text{($M$ in  $(\C_k)_{T_k}$ and  $N$ in $(\C_j)_{T_j}$)}$$
is a natural transformation between \re-exact functors. We have the following lemma.

\begin{lem}
Let $\C$ be a category, $T$ a monad on $\C$, and let $A, B : \C_T \to \D$ be two functors preserving reflexive coequalizers. If $\alpha : A \to B$ is a natural transformation
such that for any $c \in \Ob(\C)$, $\alpha_{F_T c}$ is an isomorphism, then $\alpha$ is an isomorphism.
\end{lem}

\begin{proof}
Let $M$ be a $T$-module, with action $\delta : TM \to M$, and let us show that $\alpha_M$ is an isomorphism. We have an exact reflexive cofork in $\C_T$:
$$\xymatrix{F_T T(M) \dar{F_T\delta}{\eps F_T(M)} & F_T(M)  \ar[r]^\delta & M}$$
hence a commutative diagram in $\D$:
$$\xymatrix{
A F_T T(M) \ar[d]_{\alpha_{F_T T(M)}} \dar{AF_T\delta}{A\eps F_T M}
& AF_T(M)  \ar[r]^\delta \ar[d]^{\alpha_{F_T(M)}} & A(M) \ar[d]^{\alpha_M} \\
B F_T T(M) \dar{BF_T\delta}{B\eps F_T M} & BF_T(M)  \ar[r]^\delta & B(M)
}$$
where the horizontal lines are exact. Since the left and center vertical arrows are isomorphisms, so is the right one, that is, $\alpha_M$ is an isomorphism.
\end{proof}

Using the lemma and the fact that $U_{T_i}$ is conservative, it is enough to show that
$U_{T_i} \tilde{H}^l_{a,b}(F_{T_j}(X), F_{T_k}(Y))$ is an isomorphism for $X$ in $\C_j$ and
$Y$ in $\C_k$.

Consider the diagram:
$$\xymatrix@C=0pc{
T_a(T_j X \otimes T_b T_k Y) \ar@{->>}[dddd]_{\text{epi.}} \ar[rrr]^{H^l_{a,b}(T_j X, T_k Y)}_\thickapprox &&& T_aT_jX \otimes T_{ab} T_k Y \ar@{->>}[ddd] \ar[ldd]^{{\mu_{a,j}}_X \otimes {\mu_{ab,k}}_{Y}}\\
&T_a T_j(X \otimes T_b T_k Y) \ar[ul]^{\thickapprox}_{\quad T_a(H^l_{j,b}(X,T_kY))} \ar[rru]_{\quad(\id \otimes \mu_{a,b})(T_a T_j)^2} \ar[d]_{\mu_{a,j}(X \otimes {\mu_{b,k}}_Y)} & 
&\\
&T_a(X \otimes T_b Y)\ar[r]^{H^l_{a,b}(X,Y)}_{\thickapprox} \ar[d]_{\thickapprox} &T_a X \otimes T_{ab} Y \ar[rd]^{\thickapprox}&\\
&U_{T_i} \tT_aF_{T_j}(X \otimes T_b Y) \ar[ld]^{\quad\quad U_{T_i}\tT_a(H^l_{j,b}(X,Y))}_\thickapprox&& U_{T_i}\tT_a F_{T_j} X \otimes U_{T_i}\tT_{ab} F_{T_k}Y\ar[d]^{\thickapprox}\\
U_{T_i}\tT_a(F_{T_j}X \otimes \tT_b F_{T_k}Y ) \ar[rrr]_{U_{T_i} \tilde{H}_{a,b}(F_{T_j}(X), F_{T_k}(Y))} &&& U_{T_i}(\tT_a F_{T_j} X \otimes \tT_{ab} F_{T_k}Y)
}$$
The outer square commutes, and one checks easily that the inner cells commute, except the bottom hexagon. By diagram chasing, one sees that the bottom hexagon commutes also, so the bottom arrow $\tilde{H}^l$ is an isomorphism, that is, $\tT$ is a left Hopf polyad.
\end{proof}

A \re-exact comonoidal polyad $T = (D,\C,T,\mu,\eta)$ is \emph{transitive} if for $j \labelto{a} i$ in $D_1$, $T^0_a(\un)$ is a transitive coalgebra in $\C_i$.

\begin{thm} \label{thm-fond}(Fundamental theorem of Hopf polyads)

If $T$ is a transitive \re-exact left or right Hopf polyad, then $\tT$ is a strong comonoidal action-type polyad - in other words, a strong comonoidal pseudofunctor.
\end{thm}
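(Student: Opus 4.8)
The plan is to combine Theorem~\ref{th-hopf}, which already guarantees that the lift $\tT$ is again a (left or right) Hopf polyad, with the fusion-operator identities of Proposition~\ref{prop-fusion} and the transitivity hypothesis. I treat the left Hopf case throughout; the right Hopf case is strictly symmetric, replacing $H^l$ by $H^r$ and using the second formulas in Proposition~\ref{prop-fusion}. Recall from the lift construction that $\tilde\eta_i = \id$ and that $\tT_i$ is the identity monad of $\tilde{\C}_i$, so the unit half of the action-type condition holds for free; what remains is to prove that $\tilde\mu$ is invertible and that each $\tT_a$ is \emph{strong} comonoidal, i.e.\ that $\tT_a^2$ and $\tT_a^0$ are isomorphisms.

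First I would show that $\tT_a$ preserves tensor products. Specializing Proposition~\ref{prop-fusion}(2) to the lift at $b = \id_j$ and using $\tilde\eta_j = \id$ together with $\tT_j = \id$ gives $\tilde{H}^l_{a,j} = \tT_a^2$. Since $\tT$ is left Hopf by Theorem~\ref{th-hopf}, $\tilde{H}^l_{a,j}$ is an isomorphism, hence so is $\tT_a^2$.

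The heart of the argument --- and the step where transitivity is indispensable --- is proving that $\tT_a^0 \co \tT_a\un \to \un$ is an isomorphism. As $U_{T_i}$ is conservative and \re-exact, it suffices to treat the underlying morphism in $\C_i$. Unwinding the lift, $U_{T_i}\tT_a\un$ is the coequalizer in $\C_i$ of the reflexive pair $T_a(T_j^0),\ {\mu_{a,j}}_{\un} \co T_aT_j\un \rightrightarrows T_a\un$, and $U_{T_i}\tT_a^0$ is the morphism induced on it by $T_a^0 \co T_a\un \to \un$ (that $T_a^0$ coequalizes the pair follows from comonoidality of $\mu_{a,j}$). I claim the left fusion operator ${H}^l_{a,j}(\un,\un) \co T_aT_j\un \to T_a\un \otimes T_a\un$, invertible because $T$ is left Hopf, carries this cofork isomorphically onto the transitivity cofork
$$\xymatrix{T_a\un \otimes T_a\un \dar{T_a^0 \otimes T_a\un}{T_a\un \otimes T_a^0} & T_a\un \ar[r]^-{T_a^0} & \un}$$
of the coalgebra $T_a\un$. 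Indeed, Proposition~\ref{prop-fusion}(6) gives $(T_a^0 \otimes T_a\un){H}^l_{a,j}(\un,\un) = {\mu_{a,j}}_{\un}$, while Proposition~\ref{prop-fusion}(5) gives $(T_a\un \otimes T_a^0){H}^l_{a,j}(\un,\un) = T_a(T_j^0)$ (using $T_{aj}=T_a$); since the augmentation $T_a^0$ is shared by both coforks, we get a commutative ladder whose vertical legs are ${H}^l_{a,j}(\un,\un)$ and identities. Transitivity (Definition~\ref{def-trans} and Proposition~\ref{prop-faith}(iii)) asserts exactness of the lower cofork, so the upper one is exact too, i.e.\ $U_{T_i}\tT_a^0$ is an isomorphism.

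Finally, with $\tT_a^0$ invertible, applying Proposition~\ref{prop-fusion}(6) to the lift yields $\tilde\mu_{a,b\,M} = (\tT_a^0 \otimes \tT_{ab}M)\,\tilde{H}^l_{a,b}(\un,M)$, a composite of the isomorphisms $\tT_a^0 \otimes \id$ and $\tilde{H}^l_{a,b}$, so $\tilde\mu$ is an isomorphism. Combined with $\tT_a^2$, $\tT_a^0$ invertible and $\tilde\eta = \id$, this shows $\tT$ is a strong comonoidal action-type polyad, i.e.\ a strong comonoidal pseudofunctor. The one delicate point is precisely the tensor-unit step: one must verify that identities (5) and (6) of Proposition~\ref{prop-fusion} match the two legs of the lift cofork to the two legs of the transitivity cofork and that the reflexive common section of the lift pair descends correctly --- both are routine consequences of the comonoidality of $T_a$ and of $\mu_{a,j}$.
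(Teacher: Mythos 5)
Your proof is correct and follows essentially the same route as the paper's: reduce via Theorem~\ref{th-hopf} and $\tilde\eta=\id$ to showing $\tT_a^2$, $\tT_a^0$ and $\tilde\mu$ invertible, get $\tT_a^2$ from Proposition~\ref{prop-fusion}(2), get $\tT_a^0$ by using identities (5) and (6) to transport the coequalizer cofork defining $\tT_a\un$ onto the transitivity cofork of the coalgebra $T_a\un$, and get $\tilde\mu$ from identity (6). The only difference is organizational: the paper packages the cofork-transport step as a separate lemma asserting that $\tT$ is again transitive (concluding $\tT_a\un\cong\un$) and then invokes a lemma treating the case $\eta$ invertible, whereas you read off the invertibility of $\tT_a^0$ directly from the same diagram.
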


\begin{proof}

We begin with the following lemma, which is a special case of the theorem (when $\tT = T$).

\begin{lem}\label{lem-case-eta}
Let $T = (D,\C,T,\mu,\eta)$ be a left or right Hopf polyad such that $\eta$ is an isomorphism. Then $T$ preserves tensor products.
If in addition $T$ is transitive, then $T$ is strong comonoidal of action type.
\end{lem}

\begin{proof} Assume for instance that $T$ is left Hopf.
Applying Proposition~\ref{prop-fusion}, Assertion (2), we see immediately that $T^2_a$ is an isomorphism for $a \in D_1$, that is, $T$ preserves tensor products.
In particular, the coproduct $T^2_a(\un)$ of the coagebra $T_a(\un)$ is an isomorphism, which implies that $T^a(\un) \otimes T^0_a$ is an isomorphism.
Now assume $T$ is transitive. The functor $T_a(\un)\otimes ?$ is conservative, so  $T^a_0$ is an isomorphism, that is, $T_a$ preserves tensor units.
Thus, $T_a$ is strong comonoidal.
Moreover, by Proposition~\ref{prop-fusion}, Assertion (6), $\mu_{a,b}(X) =(T^0_a \otimes T_{ab}X) H^l_{a,b}(\un,X)$ is an isomorphism, which shows that $T$
is a polyad of action type, and we are done.
\end{proof}

Now let us prove the general case. We already know that if $T$ is a \re-exact left (resp. right) Hopf polyad , so is $\tT$ (by Proposition~\ref{prop-comopo} and Theorem~\ref{th-hopf}).
If in addition $T$ is transitive, so is $\tT$:

\begin{lem}\label{lem-trans}
If $T$ is a \re-exact transitive left or right Hopf polyad, then $\tT$ is transitive.
\end{lem}

\begin{proof}
Assume $T$ is left Hopf. Let $(j \labelto{a} i) \in D_1$, and consider the  following:
$$\xymatrix{
T_a T_j \un \dar{T_a(T^0_j)}{{\mu_{a,j}}_\un} \ar[d]_{H^l(\un,\un)} & T_a\un \ar[r]^{T^0_a} \ar[d]^{\id_{T_a\un}} & \un \ar[d]^{\id_\un}\\
T_a\un \otimes T_a\un \dar{T_a\un \otimes T^0_a}{T^0_a \otimes T_a \un} & T_a\un \ar[r]^{T^0_a} & \un.
}
$$
The bottom cofork is exact because $T_a\un$ is a transitive coalgebra. The diagram is an isomorphism bewteen the top and bottom coforks (by Proposition~\ref{prop-fusion}, Assertions~(5)
and~(6). So the top cofork is exact, which means that $\tT_a \un$ is isomorphic to $\un$, and therefore transitive as a coalgebra.
\end{proof}

We conclude by applying Lemma~\ref{lem-case-eta} to the polyad $\tT$.
\end{proof}

\begin{rem}
If $T$ is a \re-exact left or right Hopf monad, then $\tT$ preserves tensor products. (Results from the previous proof).
\end{rem}

\section{Fonctoriality properties of polyads}\label{sect-fonct}

\subsection{Morphisms of polyads}

Let $D$ be a small category. We define the \emph{$2$-category of $D$-polyads} $\Poly(D)$ as follows.

An object of $\Poly(D^)$ is a $D$-polyad.

Given two $D$-polyads $T=(D,\C,T,\mu,\eta)$ and $T' = (D,\C',T',\mu',\eta')$, a $1$-morphism from $T$ to $T'$ in $\Poly(D)$ consists in a pair $(F,\theta)$ as follows:
\begin{enumerate}[(1)]
\item $F = (F_i)_{i \in D_0}$ is a family of functors $F_i : \C_i \to \D_i$;
\item $\theta = (\theta_a)_{a \in D_1}$ is a family of natural transformations $\theta_a : F_i T_a \to T'_a F_j$, where $j \labelto{a} i$;
\end{enumerate}
with the following conditions:
\begin{enumerate}
\item for $k \labelto{b} j \labelto{a} i$ in $D_2$,
$$\theta_{ab} (F_i \mu_{a,b}) = (\mu'_{a,b}F_k)(T'_a \theta_b)(\theta_a T_b),$$
\item  for $i \in \D_0$,
$$\theta_i(F_i \eta_i) = \eta'_i F_i.$$
\end{enumerate}

Given two $1$-morphisms $(F,\theta)$, $(F',\theta')$ from $T$ to $T'$, a $2$-morphism $\alpha : (F,\theta) \to (F',\theta')$ is a collection $(\alpha_i)_{i \in D_0}$ of natural transformations $\alpha_i : F_i \to F'_i$ such that for any $j \labelto{a} i$ in $D_1$,
$$\theta'_a (\alpha_i T_a) = (T'_a \alpha_j) \theta_a.$$

The various compositions are defined in the obvious way. In particular, the composition of two $1$-morphisms $(F,\theta) : T \to T'$ and $(F',\theta'): T' \to T''$ is the $1$-morphism $(F'F,\theta'')$, where
$$\theta''_a = (\theta'_a F_j) (F'_i \theta_a).$$

We define similarly the \emph{2-category of comonoidal $D$-polyads} $\coPoly(D)$, whose objects are comonoidal $D$-polyads, $1$-morphisms are $1$-morphisms of $D$-polyads $(F,\alpha)$
equipped with a comonoidal structure on the functors $F_i$ such that the natural transformations $\alpha_a$ are comonoidal, and $2$-morphisms are $2$-morphisms of $D$-polyads $\alpha$
such as the natural transformations $\alpha_i$ are comonoidal.

\begin{exa}
Let $T = (D,\C,T,\mu,\eta)$ be a \re-exact $D$-polyad, with lift $\tT = (D,\tilde{\C},\tT, \tilde{\mu},\tilde{\eta})$.
Then we have two $1$-morphisms of $D$-polyads $U_T = (U_T,e) :  \tT \to T$ and $F_T = (F_T,h) : T \to \tT$, defined as follows:
\begin{align*}
&(U_T)_i = U_{T_i}, &h_a = U_{T_i} \tT_a \eps^{T_j} : U_{T_i} \tT_a    \to U_{T_i} \tT_a F_{T_j}  U_{T_j} \simeq T_a U_{T_j},\\
&(F_T)_i = F_{T_i}, &e_a = \eta^{T_i} \tT_a F_{T_j} : F_{T_i} T_a \simeq  F_{T_i} U_{T_i} \tT_a F_{T_j} \to  \tT_a U_{T_j}.
\end{align*}
Moreover, we have $2$-morphisms $\eta = (\eta_i) : \id_T \to U_T F_T$ and $\eps = (\eps_i) : F_T U_T \to \id_{\tT}$, so that $(F_T,U_T,\eta,\eps)$ is an adjunction in the $2$-category $\Poly(D)$.
Moreover, if $T$ is a \re-exact comonoidal $D$-polyad, the above construction can be made comonoidal, and produces an adjunction $(F_T,U_T,\eta,\eps)$ in $\coPoly(D)$.
\end{exa}

\subsection{Pull-back}

Let $T =(D,\C,T,\mu,\eta)$ be a $D$-polyad, and consider a functor $f : D' \to D$. Then one defines a $D'$-polyad $f^*T = (D',\C',T',\mu',\eta')$, with $\C' = (\C_{f(j)})_{j \in D'_0}$,
$T'_a = T_{f(a)}$ for $a \in D'_1$, $\mu'_{a,b} = \mu_{f(a),f(b)}$ for $(a,b) \in D'_2$, and $\eta'_j = \eta_{f(j)}$ for $j \in D'_0$.

If $T$ is comonoidal, then $f^*$ becomes comonoidal too.
This construction extends naturally to $2$-functors \emph{pull-back along $f$}:
$$f^* : \Poly(D) \to \Poly(D') \blabla{and} f^* : \coPoly(D) \to \coPoly(D').$$

Note that the pull-back preserves left and right Hopf polyads, and it commutes with the lift: if $T$ is a \re-exact (comonoidal) polyad, then so is $f^* T$, and $f^*\tilde{T} = \widetilde{f^* T}$.

Given a $D$-polyad $T$ and a functor $f : D' \to D$, we define a functor: $$f_* : \Mod(T) \to \Mod(f^*T),$$
by sending a $T$-module $(X,\rho)$ to the $f^*$-module $(X',\rho')$,
with $X' = (X_f(j))_{j \in D'_0}$, $\rho' = (\rho_{f(a)})_{a \in D'_1}$.

If $T$ is a comonoidal $D$-polyad, then $f_*: \Mod(T) \to \Mod(f^*X)$ is strong (in fact, strict) comonoidal.

In the case where $f$ is a inclusion of a subcategory $D'$ in $D$, then the pull-back $f^* T$ will be denoted by $T_{\mid D'}$.

Using the notion of pull-back, we can interpret representations of a polyad as modules of a certain pull-back of the same polyad:

\begin{lem}\label{lem-mod-rep}
Let $D$ be a category, and define a new category $\Ar(D)$ in the following way. The objects of $\Ar(D)$ are the morphisms of $D$: $\Ar(D)_0 = D_1$. Given $b,c \in \Ar(D)$, a morphism
$a : b \to c$ is a morphism $a$ such that $c = ab$. Note that the map $(a,b) \to a : b \to ab$ is a bijection between $D_2$ and $\Ar(D)_1$.
Define a functor
$$\left\{
\begin{array}{llll}
 t: & \Ar(D) &\to &D  \\
  & \Ar(D)_0 \ni b &\mapsto &t(b)\\
  &  \Ar(D)_1 \ni a &\mapsto &a
\end{array}
\right.$$
Then, if $T$ is a polyad with source $D$,  the categories $\Rep(T)$ and $\Mod(t^* T)$ are canonically isomorphic, and if $T$ is a comonoidal polyad, this isomorphism is strict (co)monoidal.
\end{lem}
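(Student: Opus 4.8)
The plan is to make the claimed isomorphism completely explicit by means of the stated bijection $D_2 \cong \Ar(D)_1$, $(a,b) \mapsto (a \co b \to ab)$, and then to check that the defining data and axioms of a representation of $T$ translate term by term into those of a $t^*T$-module. First I would unravel the pull-back $t^*T$. Its source is $\Ar(D)$; to an object $b \in \Ar(D)_0 = D_1$ it assigns the category $\C_{t(b)}$ (where $t(b)$ is the target of $b$), to a morphism $\alpha = (a \co b \to ab)$ it assigns the functor $T_{t(\alpha)} = T_a$, and its products and units are $\mu'_{\alpha,\beta} = \mu_{t(\alpha),t(\beta)}$ and $\eta'_b = \eta_{t(b)}$. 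Spelling out the definition of a module over a polyad, a $t^*T$-module is thus a family $(X_b)_{b \in D_1}$ with $X_b \in \C_{t(b)}$ together with morphisms $\sigma_\alpha \co T_a(X_b) \to X_{ab}$, one for each $\alpha = (a \co b \to ab)$.

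Next I would define $\Phi \co \Rep(T) \to \Mod(t^*T)$ on objects by $X_b = W_b$ and $\sigma_{(a \co b \to ab)} = \rho_{a,b}$, and on morphisms by $g_b = f_b$; this is literally the identity on the underlying families, since $\Ar(D)_0 = D_1$. The core of the proof is to see that the axioms correspond under the bijection. The unit axiom is immediate: the identity of an object $b$ (with $b \co k \to j$) in $\Ar(D)$ is $(\id_j \co b \to b)$, so the module unit condition $\sigma_{\id_b}(\eta'_b)_{X_b} = \id_{X_b}$ is exactly the representation condition $\rho_{\id_j,b}(\eta_j)_{W_b} = \id_{W_b}$. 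For associativity, a composable pair in $\Ar(D)_2$ has the form $\beta = (a \co b \to ab)$, $\alpha = (a' \co ab \to a'ab)$, with composite $\alpha\beta = (a'a \co b \to a'ab)$; substituting $(t^*T)_\alpha = T_{a'}$, $(t^*T)_\beta = T_a$, $(t^*T)_{\alpha\beta} = T_{a'a}$ and $\mu'_{\alpha,\beta} = \mu_{a',a}$ into the module associativity square converts it into $\rho_{a',ab}\,T_{a'}(\rho_{a,b}) = \rho_{a'a,b}\,(\mu_{a',a})_{W_b}$, which is precisely the representation associativity axiom for the triple $a',a,b$. The same substitution turns the naturality square defining a $t^*T$-module morphism into the condition $\rho'_{a,b}T_a(f_b) = f_{ab}\rho_{a,b}$. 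Since $\Phi$ is a bijection on objects and on morphisms and visibly respects identities and composition, it is an isomorphism of categories, with inverse $(X,\sigma) \mapsto (W,\rho)$, $W_a = X_a$, $\rho_{a,b} = \sigma_{(a \co b \to ab)}$.

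For the comonoidal statement I would use that the pull-back of a comonoidal polyad is comonoidal, with $(t^*T)^2_\alpha = T^2_{t(\alpha)} = T^2_a$ and $(t^*T)^0_\alpha = T^0_a$. The monoidal structure on $\Mod(t^*T)$ is componentwise: $(X \otimes X')_b = X_b \otimes X'_b$ in $\C_{t(b)}$, with action $(\sigma_\alpha \otimes \sigma'_\alpha)\,T^2_a$ and unit object having $b$-component $\un$ and structure morphisms $T^0_a$. These are given by exactly the same formulas as the corresponding structure on $\Rep(T)$, so $\Phi$ identifies the two monoidal structures on the nose and is therefore strict (co)monoidal.

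I expect the only real obstacle to be the bookkeeping in the associativity step. One must keep straight that the composition law of $\Ar(D)$ composes only the `slopes' $a',a \mapsto a'a$ while carrying the base object $b$ along unchanged, so that in the associativity square the pair indexing $\mu$ is $(a',a)$, the inner action $\rho_{a,b}$ uses the first slope together with the base, and the outer action $\rho_{a',ab}$ uses the second slope together with the already-acted-upon object $ab$. Once this dictionary and the composition rule of $\Ar(D)$ are pinned down, each verification reduces to comparing two identical formulas.
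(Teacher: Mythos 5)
Your proof is correct and is exactly the term-by-term unwinding that the paper dismisses with the single word ``Straightforward''; the dictionary $(a,b)\leftrightarrow(a\co b\to ab)$, the identification of identities in $\Ar(D)$ with pairs $(\id_{t(b)},b)$, and the matching of the associativity squares are all as intended. Note only that you have (correctly) read the representation datum as $W_a\in\Ob(\C_i)$ for $a\co j\to i$, i.e.\ in the \emph{target} category, which is the convention forced by $\rho_{a,b}\co T_aW_b\to W_{ab}$ and by the formula for $L_T$, even though the paper's definition as printed says $\C_j$.
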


\begin{proof}
Straightforward.
\end{proof}

\subsection{Special case:  when the source is a groupoid}

\begin{thm} Let $\Gamma$ be a connected groupoid, and let $T$ be a left or right \re-exact transitive Hopf $\Gamma$-polyad. Let $i_0\in \Gamma$, and
denote by $\Gamma_{i_0}$ the full subcategory of $\Gamma$ with $i_0$ as unique object.
Then:
\begin{enumerate}[(1)]
\item the restriction functor $\Mod(T) \to \Mod(T_{\mid \Gamma_{i_0}})$ is a monoidal equivalence;
\item the group $G= \Aut_\Gamma(i_0)$ acts on the monoidal category $\tilde{\C}_{i_0} = (\C_{i_0})_{T_{i_0}}$;
\item $\Mod(T)$ is monoidally equivalent to the equivariantization of $\tilde{\C}_{i_0}$ under this action.
\end{enumerate}
\end{thm}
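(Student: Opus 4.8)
The plan is to push everything through the lift $\tT$, which by the fundamental theorem (Theorem~\ref{thm-fond}) is a strong comonoidal action-type polyad, and then to exploit that the inclusion $\iota : \Gamma_{i_0} \hookrightarrow \Gamma$ is an equivalence of categories: it is fully faithful, being a full subcategory, and essentially surjective, since $\Gamma$ is connected. Throughout I write $G = \Aut_\Gamma(i_0)$, so that $\Gamma_{i_0} = \ul{G}$.

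I would settle (2) first. The pull-back $T_{\mid \Gamma_{i_0}} = \iota^* T$ is again transitive, \re-exact and Hopf: pull-back preserves Hopf polyads and merely selects a subfamily of the coalgebras $T_a\un$, all of which remain transitive. As pull-back commutes with the lift, its lift is $\widetilde{T_{\mid \Gamma_{i_0}}} = (\tT)_{\mid \Gamma_{i_0}}$, which by Theorem~\ref{thm-fond} is a strong comonoidal action-type $\ul{G}$-polyad. By Example~\ref{exa-equiv} this is exactly an action of $G$ on the monoidal category $\tilde{\C}_{i_0} = (\C_{i_0})_{T_{i_0}}$, whose modules form the associated equivariantization. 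This proves (2) and already identifies the right-hand side of (3).

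For (1), I would reduce to the lift. By Proposition~\ref{prop-comopo} the functor $?^\flat$ gives strict monoidal isomorphisms $\Mod(\tT) \iso \Mod(T)$ and $\Mod((\tT)_{\mid \Gamma_{i_0}}) \iso \Mod(T_{\mid \Gamma_{i_0}})$, and these intertwine the two restriction functors $\iota_*$: indeed $?^\flat$ is computed componentwise by the forgetful functors $U_{T_i}$, while $\iota_*$ merely discards the components lying outside $\Gamma_{i_0}$, and pull-back commutes with the lift and with the functors $U$, $F$. Since the restriction of a comonoidal polyad is strict comonoidal, $\iota_*$ is automatically (co)monoidal, so it suffices to prove that the underlying functor $\Mod(\tT) \to \Mod((\tT)_{\mid \Gamma_{i_0}})$ is an equivalence of categories.

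This last point is the heart of the matter, and I would isolate it as a lemma: for a strong comonoidal action-type polyad $S$ on a connected groupoid $\Gamma$, restriction along $\iota$ is an equivalence. To build a quasi-inverse (an induction functor), I choose for each object $i$ a morphism $\gamma_i : i_0 \to i$ with $\gamma_{i_0} = \id_{i_0}$. Given a module $(X,\rho)$ over $S_{\mid \Gamma_{i_0}}$ — an object $X \in \C_{i_0}$ with coherent isomorphisms $\rho_g : S_g X \to X$ for $g \in G$ — I set $\tilde{X}_i = S_{\gamma_i}(X)$ and, for $a : j \to i$, define $\rho_a : S_a(\tilde{X}_j) \to \tilde{X}_i$ by combining the action-type isomorphisms $\mu$ with $S_{\gamma_i}(\rho_g)$, where $g = \gamma_i^{-1} a \gamma_j \in G$. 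One then verifies the module axioms from the coherence of $\mu$, $\eta$ and the $\ul{G}$-module axioms for $\rho$, and checks that restriction and induction are mutually quasi-inverse, the unit and counit being built from $\mu$, $\eta$ and Lemma~\ref{lem-act-inv}, which guarantees that all structure morphisms are invertible. Combining (1) with the identification in (2) then yields (3). The main obstacle is precisely this core lemma: the bookkeeping of the induction functor and the verification of the module axioms and of the two natural isomorphisms, where the arbitrary choices of paths $\gamma_i$ must be shown to be irrelevant up to coherent isomorphism; the remaining compatibilities are routine given the pull-back subsection.
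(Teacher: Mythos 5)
Your proposal is correct and follows essentially the same route as the paper: reduce to the lift $\tT$ via Proposition~\ref{prop-comopo} and Theorem~\ref{thm-fond} so that one may assume the polyad is strong comonoidal of action type, observe that the restriction to $\Gamma_{i_0}$ is then precisely a group action of $G$ on $\tilde{\C}_{i_0}$ with the equivariantization as its module category, and construct the quasi-inverse induction functor by choosing morphisms $a_i : i_0 \to i$ and transporting the action along $\gamma = a_i^{-1} a a_j \in G$. The paper's proof is exactly this, down to the formula for the induced action.
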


\begin{proof} In order to simplify the notation, set  $\C_0 = \C_{i_0}$ and $T_0 = T_{\mid \Gamma_{i_0}}$.
In view of Proposition~\ref{prop-comopo} and the previous remarks, we may replace $T$ with $\tT$, and by Theorem~\ref{thm-fond}, we may assume that $T$ is of action type and strong comonoidal. In particular, for $a \in D_1$ $T_a$ is strong comonoidal, and it is an equivalence of categories because $a$ is invertible. Recall also that in this setting, actions are invertible (see Lemma~\ref{lem-act-inv}).

In that case, $T_0$ is nothing but an action of the group $G$ on $\C_0$, and $\Mod(T_0)$ is the equivariantization of this action.
So all that remains to be proved, is Assertion~(1).

We construct a quasi-inverse to the restriction functor as follows. Choose for each $i \in D_0$ a morphism $a_i : i_0 \iso i$ in $D_1$, which is possible because $\Gamma$ is connected.

Let $(X_0,r)$ be an object of $\Mod(T_0)$. Here $X_0$ is an object of $\C_0$ and $r = (r_g)_{g \in G}$, with $r_g : T_g(X) \iso X$.
Define an object $(X,\rho)$ in $\Mod(T)$ as follows. For $i \in D_1$, set $X_i = T_{a_i} X_0$. For $j \labelto{a} i$ in $D_1$, define $\rho_a : T_a X_j \to X_i$ to be the compositum of
$$T_a X_j = T_aT_{a_j} X_0 \simeq T_{a_i} T_{\gamma} X_0\labelto{T_{a_i} r_{\gamma}} T_{a_i} X_0 = X_i,$$
where $\gamma = {a_i}^{-1} a a_j \in G$.

One verifies that $(X,\rho)$ is a $T$-module, and the assignment $(X_0,r) \mapsto (X,\rho)$ defines a quasi-inverse to the restriction functor $\Mod(T) \to \Mod(T_0)$.

\end{proof}

\section{Quasitriangular comonoidal and Hopf polyads}\label{sect-quasitriangular}

If $T$ is a comonoidal monad on a monoidal category $\C$, an \Rt matrix for $T$ is a natural transformation is (see~\cite{BV2}) a natural transformation
$$R_{X,Y} : X \otimes Y \to Y \otimes X$$ satisfying the following axioms:

\begin{thm}\label{thm-rmatbraid}
A \Rt matrix for $T$ defines a braiding $\tau$ on the monoidal category $\C_T$ as follows:
$$\tau_{(X,r),(Y,s)} = (s \otimes t) R_{X,Y} : (X,r) \otimes (Y,s) \to (Y,s) \otimes (X,r),$$
and this assignment is a bijection between \Rt-matrices for $T$ and braidings on $\C_T$.
\end{thm}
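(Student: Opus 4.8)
The plan is to present the two assignments in opposite directions and to verify that they are mutually inverse, using the free-module functor $F_T : \C \to \C_T$ as the bridge throughout. Recall that $R_{X,Y} : X \otimes Y \to TY \otimes TX$, so that the actions $s$ and $r$ absorb the outer copies of $T$ and $(s \otimes r)R_{X,Y}$ indeed lands in $Y \otimes X$, as the target $(Y,s) \otimes (X,r)$ of $\tau$ requires.

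\emph{From an \Rt matrix to a braiding.} Given modules $(X,r)$ and $(Y,s)$, I would first verify that $\tau_{(X,r),(Y,s)} = (s \otimes r)R_{X,Y}$ is $T$-linear, hence a genuine morphism $(X,r) \otimes (Y,s) \to (Y,s) \otimes (X,r)$ in $\C_T$. Since the action on a tensor product of modules is $(r \otimes s)T^2(X,Y)$, this uses the compatibility of $R$ with $\mu$ and with the comonoidal constraint $T^2$, together with the module axioms $r\mu_X = rT(r)$ and $s\mu_Y = sT(s)$. Naturality of $\tau$ in both variables is then immediate from naturality of $R$. Finally, the two hexagon identities for $\tau$ must be deduced from the corresponding hexagon-type axioms on $R$; when a braiding is required to be invertible, one also checks that $\tau$ is an isomorphism, the inverse being produced from the same data. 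This hexagon translation is the computational core.

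\emph{From a braiding to an \Rt matrix.} The essential observation is that the action $r : F_T X \to (X,r)$ is itself $T$-linear, which is precisely the identity $rT(r) = r\mu_X$, and that $r\eta_X = \id_X$. Given a braiding $c$ on $\C_T$, I would define $R_{X,Y}$ as the composite $X \otimes Y \labelto{\eta_X \otimes \eta_Y} TX \otimes TY \to TY \otimes TX$, where the second arrow is the morphism underlying $c_{F_T X, F_T Y}$. Naturality of $c$ and of $\eta$, combined with the hexagons for $c$, then yield exactly the axioms required of an \Rt matrix.

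\emph{Mutual inverseness.} Starting from $R$, forming $\tau$ and extracting $R'$ gives $R'_{X,Y} = (\mu_Y \otimes \mu_X)R_{TX,TY}(\eta_X \otimes \eta_Y)$; naturality of $R$ along $\eta_X$ and $\eta_Y$ rewrites the right-hand side as $\bigl((\mu_Y T(\eta_Y)) \otimes (\mu_X T(\eta_X))\bigr)R_{X,Y}$, which equals $R_{X,Y}$ by the monad unit axiom $\mu T(\eta) = \id$. Conversely, starting from a braiding $c$, the braiding built from the extracted $R$ equals, at $(X,r)$ and $(Y,s)$, the composite $(s \otimes r)\,c_{F_T X, F_T Y}\,(\eta_X \otimes \eta_Y)$; applying naturality of $c$ along the $T$-linear maps $r$ and $s$ and using $r\eta_X = \id_X$, $s\eta_Y = \id_Y$, this collapses to $c_{(X,r),(Y,s)}$. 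The main obstacle I expect is the hexagon bookkeeping in the first direction: one must interleave the comonoidal constraints $T^2$, the products $\mu$, and the actions so that all module-dependent pieces cancel, leaving a module-independent identity on $R$, and then match that identity precisely with the stated axioms, so that the correspondence is a genuine bijection rather than merely a pair of maps.
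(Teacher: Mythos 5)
Your proposal is correct and follows exactly the standard argument (from [BV07], to which the paper defers: no proof of this theorem is actually given in the text, and the list of \Rt-matrix axioms is even left blank there). Your inverse construction $R_{X,Y}=c_{F_TX,F_TY}(\eta_X\otimes\eta_Y)$ and the two collapsing computations via naturality of $R$ along $\eta$ and of $c$ along the $T$-linear actions $r$, $s$ are precisely the published proof; you also correctly read the codomain of $R_{X,Y}$ as $TY\otimes TX$ and the braiding as $(s\otimes r)R_{X,Y}$, silently repairing two typographical slips in the statement.
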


Let $T = (D,\C,T,\mu,\eta)$ be a comonoidal polyad. An \Rt matrix for $T$ is a family $(R^i)_{i \in D_0}$, where $R_i$ is a \Rt matrix for $T_i$ for each $i \in D_0$, satisfying the following axiom for each $j \labelto{a} i$ in $D_1$:
$$
({\mu_{a,j}}_Y \otimes {\mu_{a,j}}_X) \,  T^2_ a(T_jY \otimes T_j X)\,T_a(R^j_{X,Y})= ({\mu_{i,a}}_Y \otimes {\mu_{i,a}}_X)\,R^i_{T_aX,T_aY}\,T^2_a(X \otimes Y)
$$

\begin{rem}
Note that this set of axioms is redundant, while the last axiom implies the first axiom satisfied by $R_i$ as  \Rt matrix (when $a=\id_i)$.
\end{rem}

\begin{thm}\label{thm-polyrmatbraid} Let $R$ be a \Rt matrix on a comonoidal polyad $T = (D,\C,T,\mu,\eta)$, and denote by $\tau_i$ the braiding on $(\C_i)_{T_i}$ defined by the
 \Rt matrix $R_i$ for $i \in T_0$.
Then, for every functor $f : D' \to D$, $R$ defines a braiding $\tau^f$ on the category $\Mod(f*T)$,
defined as follows:
$$\tau^{f}_{(X, \rho),(Y,\sigma)} = \bigl(\tau^{f(x)}_{X_i,Y_i}\bigr)_{x \in D'_0},$$
and, given a functor $g: D'' \to D'$, with the braidings so defined, the restriction functor $\Mod(f^*T) \to \Mod(g^*f^*T)$ is braided.

This assignment defines a bijection between \Rt-matrices for $T$ and ways of choosing a braiding on each categegory $\Mod(f^*)$, $f : D' \to D$, so that the restriction functors
$\Mod(g^*f^*T) \to \Mod(f^*T)$ are braided for $D'' labelto{g} D' \labelto{f} D$.
\end{thm}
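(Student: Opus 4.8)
The plan is to build the braiding on each $\Mod(f^*T)$ \emph{componentwise}, and to reduce every verification to Theorem~\ref{thm-rmatbraid} together with the single compatibility axiom in the definition of an \Rt matrix for $T$. First I would record the elementary facts that make a componentwise approach work. For any comonoidal polyad $S=(D',\C',S,\mu',\eta')$ the category $\Mod(S)$ is monoidal, the tensor product being computed slotwise, and the forgetful functor $W\co\Mod(S)\to\prod_{x\in D'_0}(\C'_x)_{S_x}$ (remembering, in each slot, the underlying $S_x$-module $(X_x,\rho_{\id_x})$) is strict monoidal, faithful and conservative; indeed $W$ composed with $\prod_x U_{S_x}$ is the conservative faithful functor $U_S$. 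Consequently a natural family $(\beta_{M,N})$ of morphisms in $\Mod(S)$ is a braiding as soon as each of its slots is a braiding on the corresponding $(\C'_x)_{S_x}$: invertibility follows since $W$ is conservative, and the hexagon and naturality identities since $W$ is faithful and strict monoidal. Applying this with $S=f^*T$ (comonoidal, since pull-back preserves comonoidality) and observing that $f^*R=(R^{f(x)})_{x\in D'_0}$ is an \Rt matrix for $f^*T$ — its compatibility axiom at an arrow $a$ of $D'$ is literally that of $R$ at $f(a)$ — the candidate $\tau^f$, whose slot $x$ is the braiding on $(\C_{f(x)})_{T_{f(x)}}$ attached to $R^{f(x)}$ by Theorem~\ref{thm-rmatbraid}, is automatically natural, invertible and hexagonal. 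The \emph{only} point not handled slotwise is that each $\tau^f_{M,N}$ must be a morphism in $\Mod(f^*T)$, i.e.\ $T$-linear with respect to every arrow of $D'$, not merely over identities.

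This is the crux, and I would isolate it as a lemma: for a comonoidal polyad and arbitrary \Rt matrices $R^i$ for the monads $T_i$, the componentwise family is $T_a$-linear over a morphism $j\labelto{a}i$ for \emph{all} modules if and only if $R^i$ and $R^j$ satisfy the compatibility axiom of $T$ at $a$. Over an identity arrow this linearity is exactly the statement that the slot braidings lie in $(\C_i)_{T_i}$, which holds by Theorem~\ref{thm-rmatbraid}; only non-identity arrows carry information. For the ``if'' direction I would write out the module-morphism square over $a$ for two modules $(X,\rho)$, $(Y,\sigma)$, expand the slot braidings as $\tau_i=(\sigma_{\id_i}\otimes\rho_{\id_i})R^i$ and $\tau_j=(\sigma_{\id_j}\otimes\rho_{\id_j})R^j$, and use the module axioms $\rho_a{\mu_{a,j}}=\rho_a T_a(\rho_{\id_j})$ and $\rho_{\id_i}T_i(\rho_a)=\rho_a{\mu_{i,a}}$, together with naturality of $R$ and the comonoidal identities, to collapse the square to the compatibility axiom. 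Granting the lemma, $\tau^f$ is a braiding; and for $g\co D''\to D'$ the restriction $g_*\co\Mod(f^*T)\to\Mod(g^*f^*T)$ is braided because it merely reindexes slots and $g^*(f^*R)=(fg)^*R$, so it carries $\tau^f$ slotwise onto $\tau^{fg}$.

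It remains to prove the bijection. Injectivity is immediate: precomposing with $x_i\co *\to D$ selecting an object $i$ gives $x_i^*T=T_i$ and $\Mod(x_i^*T)=(\C_i)_{T_i}$, on which $\tau^{x_i}$ is exactly the braiding determined by $R^i$, so $R$ is recovered through the bijection of Theorem~\ref{thm-rmatbraid}. For surjectivity, start from a coherent family $\{\beta^f\}$ and define $R^i$ from $\beta^{x_i}$ by Theorem~\ref{thm-rmatbraid}. Coherence forces each $\beta^f$ to be componentwise: restricting along an object-inclusion $\iota_x\co *\to D'$, so that $f\iota_x=x_{f(x)}$, shows that slot $x$ of $\beta^f$ equals $\beta^{x_{f(x)}}$, the braiding attached to $R^{f(x)}$; hence $\beta^f$ and $\tau^f$ have the same slots and, $W$ being faithful, coincide — once $\tau^f$ is known to exist. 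That existence is guaranteed by the lemma provided the $R^i$ form an \Rt matrix for $T$, and this compatibility is the one fact still to be extracted.

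To obtain the compatibility axiom at $j\labelto{a}i$ I would use $f_a\co\Delta_1\to D$ sending $0\mapsto j$, $1\mapsto i$ and the nontrivial arrow to $a$: restricting $\beta^{f_a}$ along $\iota_0,\iota_1$ identifies its slots as $\tau_j,\tau_i$, so $\beta^{f_a}$ is $T_a$-linear over $a$ for every $f_a^*T$-module, and by the lemma this forces the axiom. The main obstacle is supplying enough such modules with no exactness hypothesis available; for this I would use, for each $X\in\C_j$, the explicit module $\Phi(X)$ with $X_0=T_jX$ (free $T_j$-module), $X_1=T_aX$ acted on by ${\mu_{i,a}}$, and $\rho_a={\mu_{a,j}}_X$, whose module axioms follow directly from the associativity axiom of the polyad. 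Feeding $\Phi(X),\Phi(Y)$ into the $T_a$-linearity square, and using naturality of $R^j$ and the unit $\eta_j$ to pass from $R^j_{T_jX,T_jY}$ to $R^j_{X,Y}$, reproduces precisely the compatibility axiom, completing the argument.
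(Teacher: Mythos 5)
Your proposal is correct and follows essentially the same route as the paper: the braiding is built slotwise and shown to be a module morphism over each arrow $a$ by the same computation reducing to the compatibility axiom of the \Rt matrix, and the bijection is established by pulling back along $*\to D$ to recover the $R^i$ and along $\Delta_1\to D$, tested on the free-type modules $((T_jX,T_aX),\mu)$ (your $\Phi(X)$, the paper's $F_X$), to recover the compatibility axiom at $a$. Your explicit packaging of the crux as an ``iff'' lemma and the remark that coherence forces any given family $\beta^f$ to be determined by its slots are useful clarifications of steps the paper leaves implicit, but they do not change the argument.
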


\begin{proof}
We verify first that $\tau = \tau^{\id_D}$ is a braiding on $\Mod(T)$.
We check first that, given $(X,\rho)$ and $(Y,\sigma)$ in $\Mod(T)$, $(\tau^i_{X_i,Y_i)}$ is a morphism in $\Mod(T)$ from $(X,\tau) \otimes (Y,\sigma)$ to $(Y,\sigma) \otimes (X,\rho)$.
This means that, for $j \labelto{i} i$ in $D_1$,
$$(\sigma_a \otimes \rho_a)\, T^2_a(Y_j \otimes X_j)\, T_a(\tau^j_{X_j,Y_j}) = \tau^i_{X_i,Y_i} \,(\rho_a \otimes \sigma_a)\,T^2_a(X_j \otimes Y_j),$$
which results from the following computation:
\begin{align*}
(\sigma_a &\otimes \rho_a)\, T^2_a(Y_j \otimes X_j)\, T_a(\tau^j_{X_j,Y_j}) = (\sigma_a \otimes \rho_a) T^2_a(Y_j \otimes X_j) T_a(\sigma_j \otimes \rho_j)T_a(R_{X_j,Y_j})\\
&=(\sigma_a T_a\sigma_j\otimes \rho_a T_a\rho_j) T^2_a(T_jY_j \otimes T_jX_j)T_a(R_{X_j,Y_j}) \\
&= (\sigma_a {\mu_{a,j}}_{Y_j}\otimes \rho_a {\mu_{a,j}}_{X_j}) T^2_a(T_jY_j \otimes T_jX_j)T_a(R_{X_j,Y_j})\\
&= (\sigma_a  \otimes \rho_a)({\mu_{a,j}}_{Y_j}\otimes {\mu_{a,j}}_{X_j}) T^2_a(T_jY_j \otimes T_jX_j) T_a(R_{X_j,Y_j})\\
&= (\sigma_a  \otimes \rho_a) ({\mu_{i,a}}_{Y_j} \otimes {\mu_{i,a}}_{X_j})\,R^i_{T_aX_j,T_aY_j}\,T^2_a(X_j \otimes Y_j)\\
&= (\sigma_i {\mu_{i,a}}_{Y_j} \otimes \rho_i {\mu_{i,a}}_{X_j})\,R_{T_a X_j,T_a Y_j}\,T^2_a(X_j \otimes Y_j)\\
&= (\sigma_i T_a\sigma_a \otimes \rho_i T_a\rho_a)\,R_{T_a X_j,T_a Y_j}\,T^2_a(X_j \otimes Y_j)\\
&=(\sigma_i \otimes \rho_i) \,R_{X_i,Y_i} \,(\rho_a \otimes \sigma_a)\,T^2_a(X_j \otimes Y_j)
=\tau^i_{X_i,Y_i} \,(\rho_a \otimes \sigma_a)\,T^2_a(X_j \otimes Y_j).
\end{align*}
One shows easily that $\tau$ is a natural transformation, and it is a braiding because each $\tau_i$ is a braiding and the forgetful functor $U_T : \Mod(T) \to |T|$ is conservative.

Now  given a functor  $f : D' \to D$,  the \Rt matrix $R$  for $T$ defines by restriction a \Rt matrix $f^*R = (R_i)_{i \in D'_0}$ for $f^*T$, which defines therefore a braiding
on $\Mod(f^*T)$. Clearly if $g : D'' \to D'$ is a second functor, the strong comonoidal restriction functor $\Mod(f^*T) \to \Mod(g^*f^*T)$ induced by $g$ preserves braidings.

The assigment $R \mapsto (\tau^f)_{f : D' \to D}$ is clearly intro, since the braidings $\tau_i$, $i \in D_0$, determine the \Rt-matrix $R$.
Let us check that it is onto.

Consider a data $(\tau^f)_{f : D' \to D}$, where $\tau^f$ is a braiding $\tau^f$ on $\Mod^f(T)$, and the restriction functors $\Mod(f^*T) \to \Mod(g^*f^*T)$
(for $D'' \labelto{g} D' \labelto{f} D$) are braided. In particular, for $i \in D_0$, let $\underline{i} : * \to D$ be the functor which sends the unique object of $*$ to $i$.
We have $\underline{i}^* T = T_i$, hence a braiding $\tau^{\underline{i}}$ on $(\C_i)_{T_i}$, which corresponds with an \Rt matrix $R_i$ for $T_i$ according to Theorem~\ref{thm-rmatbraid}.
We verify that $R=(R_i)$ is a \Rt matrix for $T$. All we have to check is that for each $j \labelto{a} i$ in $D_1$:
$$
({\mu_{a,j}}_Y \otimes {\mu_{a,j}}_X) \,  T^2_ a(T_jY \otimes T_j X)\,T_a(R^j_{X,Y})= ({\mu_{i,a}}_Y \otimes {\mu_{i,a}}_X)\,R^i_{T_aX,T_aY}\,T^2_a(X \otimes Y)
$$
Now let $\underline{a}$ be the functor $\Delta_1 \to D$ which sends the arrow $0 \to 1$ to $a$. The category $\Mod(\underline{a}^*T)$ is braided, with  braiding $\tau^{\underline{a}}$.
If $X$ is an object of $\C_j$, define an object $F_X$ of $\Mod(\underline{a}^*T)$ by $F_X = ((T_j X, T_a X),\rho^X)$, which $\rho_0 = {\mu_{i,i}}_X$, $\rho_1 = {\mu{i,a}}_X$,
$\rho_{0 \to 1} = \mu_{a,j}$. When we express the fact that, for $X, Y \in \Ob(\C_j)$, $\tau^{\underline{a}}_{F_X,F_Y}$ is a morphism in $\Mod(\underline{a}^*T)$, we obtain the relation
we had to prove.

By construction, the \Rt matrix $R$ produces the collection of braidings $(\tau^f)$, so we are done.
\end{proof}

See Remark~\ref{rem-ex-centralizer} for examples.

\section{Hopf polyads vs Hopf monads}\label{sect-polyads-monads}

We have noted that, if $T$ is a polyad, the forgetful functor $\Mod(T) \to |T|$ doesn't have a left adjoint in general. In this Section, we consider  polyads for which
such an adjoint exist, and compare them with monads.


Let $T = (D,\C,T,\mu,\eta)$ be a polyad, and for $i \in D$, let $N^D_i$ be the number of arrows in $D$ whose target is $i$.
We say that $T$ is \emph{\su-exact} if for each $i \in D_0$, the category $\C_i$ admits finite sums, and (should $N^D_i$ be infinite) sums of up to $N^D_i$ objects, and for
$a: i \to j$ in $D$, the functor $T_a$ preserves such sums.

A \emph{\su-exact comonoidal polyad} is a comonoidal polyad  $T = (D,\C,T,\mu,\eta)$ which is  \su-exact as a polyad, and in addition,  satisfies that for each $i \in D_0$ the tensor product of $\C_i$ preserves finite sums and sums of $N^D_i$ objects in each variable. A \emph{Hopf (resp. left Hopf, left right Hopf) \su-exact polyad} is a comonoidal \su-exact polyad which is Hopf (resp. left Hopf, resp. right Hopf).

If $T$ is \su-exact, set for $X \in \Ob(|T|)$ $$\hT \, X = \bigl( \coprod_{k \labelto{b} i \in D_1} T_b X_k \bigr)_{i \in D_0}.$$

For $a \in D_1$, $a : j \to i$, the product $\mu$ defines a morphism $$\rho_a : T_a (\hT \,X)_i \simeq \coprod_{k \labelto{b} i} T_a T_b X_k \to \coprod_{k \labelto{c} j} T_c X_k =
(\hT\, X)_j.$$

The product $\mu$ and the unit $\eta$ define also morphisms
$$\hat{\mu}_X : \hT^2 \,X  \simeq (\coprod_{k \labelto{b} j \labelto{a} i} T_a T_b X_k)_{i \in D_0} \to (\coprod_{k \labelto{c} i}  T_c X_k)_{i \in D_0} = \hT\, X$$
$$\hat{\eta}_X : X \to (\coprod_{j \labelto{a} i} T_a X_j)_{i \in D_0}.$$

\begin{thm}\label{thm-poly-mono}
Let $T$ be a \su-exact polyad. Then:
\begin{enumerate}[(1)]
\item The forgetful functor $U_T : \Mod(T) \to |T|$ has a left adjoint, defined by
$$F_T(X) = (\hT\,X,\rho),$$
and the adjunction $(F_T \vdash U_T)$ is monadic, with monad $(\hT,\hat{\mu},\hat{\eta})$; in fact $\Mod(T)$ is isomorphic to the category $|T|_\hT$ of $\hT$-modules $|T|$.
\item If $T$ is a  comonoidal polyad, then $\hT$ is a comonoidal monad and $\Mod(T) \simeq |T|_\hT$ as monoidal categories.
\end{enumerate}
\end{thm}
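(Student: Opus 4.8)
The plan is to reduce both assertions to a single structural fact: that $\su$-exactness forces the iterated endofunctor $\hT$ to split along composable tuples of arrows, after which the monad axioms and the identification of modules become summand-wise translations of the polyad axioms.

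First I would record the decompositions produced by $\su$-exactness. Since each $T_a$ preserves sums of up to $N^D_{\cdot}$ objects, applying $T_a$ to the coproduct defining $(\hT X)_j$ and reindexing yields canonical isomorphisms
$$(\hT^2 X)_i \iso \coprod_{k \labelto{b} j \labelto{a} i} T_aT_b X_k \qquad\text{and}\qquad (\hT^3 X)_i \iso \coprod_{l \labelto{c} k \labelto{b} j \labelto{a} i} T_aT_bT_c X_l,$$
the sums running over composable tuples ending at $i$. Under these identifications $\hat{\mu}_X$ sends the $(a,b)$-summand by $(\mu_{a,b})_{X_k}$ into the $ab$-summand, and $\hat{\eta}_X$ sends the $i$-th factor by $\eta_i$ into the $\id_i$-summand. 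With this bookkeeping the two composites $\hat{\mu}\,\hT\hat{\mu}$ and $\hat{\mu}\,\hat{\mu}\hT$ restrict on the $(a,b,c)$-summand to $\mu_{a,bc}T_a(\mu_{b,c})$ and $\mu_{ab,c}(\mu_{a,b}\,\id_{T_c})$ respectively, so associativity of $\hT$ is exactly polyad axiom (i); likewise the unit axioms reduce summand-wise to polyad axiom (ii). Hence $(\hT,\hat{\mu},\hat{\eta})$ is a monad on $|T|$.

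Next I would produce an isomorphism of categories $\Phi : \Mod(T) \iso |T|_\hT$ fixing the underlying object of $|T|$. By the universal property of the coproduct, an action $\alpha : \hT X \to X$ is precisely a family $(\rho_a : T_a X_k \to X_i)_{a \in D_1}$, namely the components of $\alpha_i$ on the summands of $(\hT X)_i$. Feeding the decomposition of $(\hT^2 X)_i$ into the $\hT$-action law $\alpha\,\hat{\mu}_X = \alpha\,\hT(\alpha)$, the $(a,b)$-summand gives $\rho_{ab}(\mu_{a,b})_{X_k} = \rho_a T_a(\rho_b)$, while the $\hT$-unit law gives $\rho_i\eta_i = \id$; these are the two axioms for a $T$-module, and the same reading shows a morphism is $\hT$-linear iff it is $T$-linear. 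Thus $\Phi$ is bijective on objects and fully faithful, hence an isomorphism of categories; it carries $U_T$ to the canonical forgetful $U_\hT$. Consequently $U_T$ is monadic with associated monad $\hT$, its left adjoint is $F_\hT$ transported back, which is exactly $F_T(X) = (\hT X,\rho)$ with unit $\hat{\eta}$ and counit given on $(X,\rho)$ by the copairing of the $\rho_a$. This settles assertion (1).

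For assertion (2), with $T$ comonoidal the category $|T| = \prod_i \C_i$ is monoidal componentwise and each $T_a$ is comonoidal. Using that $\otimes$ preserves the relevant sums in each variable, one gets $(\hT X \otimes \hT Y)_i \iso \coprod_{b,c}(T_b X_k \otimes T_c Y_l)$, and I would define the comonoidal structure of $\hT$ by mapping the $b$-summand of $\hT(X\otimes Y)$ diagonally into the $(b,b)$-summand via $T^2_b$, and by copairing the $T^0_b$ to build $\hT^0 : \hT(\un) \to \un$. On the $b$-summand the $|T|_\hT$-action of $(X,\alpha)\otimes(Y,\beta)$ is then $(\rho_b\otimes\sigma_b)T^2_b(X_k,Y_k)$, which is precisely the $\Mod(T)$-action of $(X,\rho)\otimes(Y,\sigma)$, so $\Phi$ is strict monoidal once $\hT$ is known to be a comonoidal monad. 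I expect the genuine obstacle to be exactly this last point, namely checking that $\hat{\mu}$ and $\hat{\eta}$ are comonoidal, since it is here that the coproduct indexing, the diagonal embedding, and the sum-preservation of $\otimes$ must be reconciled simultaneously. This explicit computation can be avoided by the structural route of Section~\ref{sect-comohomo}: $U_T$ is strict comonoidal and $(F_T\vdash U_T)$ underlies a comonoidal adjunction, whence $\hT = U_T F_T$ is automatically a comonoidal monad and the comparison functor $\Phi$ is strong, hence strict, monoidal, giving $\Mod(T) \simeq |T|_\hT$ as monoidal categories.
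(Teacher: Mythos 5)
Your proposal is correct and follows essentially the same route as the paper: the monad axioms for $(\hT,\hat{\mu},\hat{\eta})$ and the identification $\Mod(T)\cong |T|_{\hT}$ are both obtained by decomposing $\hT^n$ over composable tuples and reading the polyad axioms summand-wise via the universal property of the coproduct, and the comonoidal assertion is deduced from the general fact that the left adjoint of a strong comonoidal functor acquires a unique comonoidal structure making the adjunction comonoidal, so that $\hT=U_TF_T$ is a comonoidal monad and the comparison functor is a monoidal equivalence. The only difference is one of detail: you spell out the summand-wise computations and the explicit diagonal description of $\hT^2$ that the paper dismisses with ``one verifies easily.''
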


\begin{proof}
Let us prove Assertion (1). One verifies easily that $(\hT,\hat{\mu},\hat{\eta})$ is a monad. Now consider an object $X$ in $|T|$.
A morphism $r : \hT X \to X$ in $|T|$ is a family of morphisms $(r_i)_{i \in D_0}$, with $r_i : \coprod_{j \labelto{a} i} T_a X_j \to X_i$, and, in view of the universal property of sums,
it can be viewed also as a family $\overline{r} =(\overline{r}_a : T_a X_j \to X_i)_{(j \labelto{a} i) \in D_1}$.

Moreover, we have the following equivalences:
$$r(Tr) = r \mu_X \iff \forall (k \labelto{b}j \labelto{a} i) \in D_1, \overline{r}_a (T_a \overline{r}_b) = \overline{r}_{ab} {\mu_{a,b}}_{X_k},$$
$$r \eta_X = \id_X \iff \forall i \in D_0, \overline{r}_i \eta_i  = \id_{X_i}.$$
As a result, $r$ is an action of the monad $\hT$ on $X$ if and only if $\overline{r}$ is an action of the polyad $T$ on $X$. Hence an isomorphism of categories:
$$\left\{
\begin{array}{llll}
K:&\Mod(T) &\to &|T|_{\hT} \\
&(X,\overline{r}) &\mapsto &(X,r).
\end{array}
\right.$$
In particular, the forgetful functor $U_T$ is monadic, with left adjoint $K F_{\hT} = F_T$.

Now for Assertion (2). It is a general fact that the left adjoint of a strong comonoidal functor admits a unique comonoidal structure such that the adjunction is comonoidal. Hence a comonoidal structure on the monad of the adjunction.
One verifies easily that the comonoidal structure on $\hT$ so defined coincides with the one we decribed previously. The rest ensues.
\end{proof}

However, $\hT$ need not be a Hopf monad if $T$ is a Hopf polyad.

\begin{exa}
Consider the constant polyad $T = \V_D$ of Example~\ref{exa-constant}. If $\V$ is a monoidal category, then $T$ is in fact a Hopf polyad. Consider the special case $\V=\vect_\kk$ for some field $\kk$, and $D = \Delta_1$. In that case the monad $\hT$ on $|\V| = \vect_\kk \times \vect_\kk$ is given by
$\hT(E_0,E_1) = (E_0,E_0 \oplus E_1)$. Therefore  $$\hT(E,\hT F) = (E_0 \otimes F_0, E_0 \otimes F_0 \oplus E_1 \otimes (F_0 \oplus F_1) ).$$ On the other hand,
$$\hT(E) \otimes \hT(F) =(E_0 \otimes F_0, (E_0 \oplus E_1) \otimes (F_0 \oplus F_1)).$$  As a result the left fusion (and by reason of symmetry, the right fusion) operator of $\hT$ cannot be an isomorphism, that is, $\hT$ is not a Hopf monad.
\end{exa}

We have however the following theorem.

\begin{thm}\label{thm-hopfpolyhopfmon}
Let $T$ be a \su-exact comonoidal polyad with source $D$.
If  $T$ is a left (resp. right) Hopf polyad and $D$ is a groupoid,
then  $\hT$ is a left (resp. right) Hopf monad.
\end{thm}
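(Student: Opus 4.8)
The plan is to compute the left fusion operator $H^l$ of the comonoidal monad $\hT$ explicitly, componentwise on $|T| = \prod_{i \in D_0}\C_i$, and to recognize it — after distributing the functors $T_b$ and the tensor product past coproducts — as a coproduct of the left fusion operators $H^l_{b,c}$ of the polyad $T$. Since $T$ is left Hopf these are all isomorphisms, so invertibility of $H^l$ will reduce to a purely combinatorial statement about index sets, and that is exactly where the groupoid hypothesis enters. (An isomorphism in a product category is the same as a componentwise isomorphism, so it suffices to treat each component $i$.)

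First I would unwind the source $\hT(X \otimes \hT Y)$. Using $\su$-exactness (each $T_b$ preserves the relevant coproducts, and $\otimes$ preserves them in each variable), the $i$-th component becomes
$$\hT(X \otimes \hT Y)_i \cong \coprod_{k \labelto{b} i}\ \coprod_{l \labelto{c} k} T_b(X_k \otimes T_c Y_l),$$
which is indexed by composable pairs $(b,c)$ of $D$ whose composite $bc$ has target $i$. Likewise
$$(\hT X \otimes \hT Y)_i \cong \coprod_{k \labelto{b} i}\ \coprod_{l' \labelto{c'} i} T_b X_k \otimes T_{c'} Y_{l'},$$
indexed by pairs $(b,c')$ of morphisms with common target $i$.

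Next I would trace the definition $H^l(X,Y) = (\hT X \otimes \hat\mu_Y)\,\hT^2(X,\hT Y)$ through these identifications. By Theorem~\ref{thm-poly-mono} the comonoidal structure $\hT^2$ is assembled diagonally from the comonoidal structures $T_b^2$, and $\hat\mu$ is assembled from the products $\mu_{b,c}$ of the polyad. A direct check then shows that on the summand indexed by $(b,c)$ the map $H^l(X,Y)_i$ acts as
$$T_b(X_k \otimes T_c Y_l) \xrightarrow{T_b^2(X_k,T_c Y_l)} T_b X_k \otimes T_b T_c Y_l \xrightarrow{T_b X_k \otimes\, {\mu_{b,c}}_{Y_l}} T_b X_k \otimes T_{bc} Y_l,$$
which is precisely $H^l_{b,c}(X_k,Y_l)$, landing in the target summand indexed by $(b,bc)$. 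Thus, up to the distributivity isomorphisms, $H^l(X,Y)_i$ is the coproduct of the isomorphisms $H^l_{b,c}(X_k,Y_l)$ along the assignment $(b,c)\mapsto(b,bc)$ on index sets.

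Finally, invertibility of $H^l$ reduces to the assignment $(b,c)\mapsto(b,bc)$ being a bijection from composable pairs whose composite has target $i$ onto pairs of morphisms with common target $i$. Given a target pair $(b,c')$ with $b\co k\to i$ and $c'\co l'\to i$, one recovers the unique preimage by setting $c = b^{-1}c'$, which exists precisely because $D$ is a groupoid; this is the crux. The main obstacle I anticipate is not this bijection but the careful bookkeeping of the distributivity isomorphisms: one must verify that the coherence of $\hT^2$ and $\hat\mu$ with the canonical maps commuting $T_b$ and $\otimes$ past coproducts genuinely identifies the $(b,c)$-block of $H^l$ with $H^l_{b,c}$, with no stray permutation or mismatch of summands. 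The right Hopf case follows by the mirror computation, using $H^r_{b,c}$ and the assignment $(b,c)\mapsto(bc,b)$, which is a bijection under the same groupoid hypothesis.
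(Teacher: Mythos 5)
Your proposal is correct and follows essentially the same route as the paper: decompose $\hT(X\otimes\hT Y)_i$ and $(\hT X\otimes\hT Y)_i$ into coproducts via \su-exactness, identify the $(b,c)$-block of the fusion operator of $\hT$ with $H^l_{b,c}$ of the polyad, and observe that the induced reindexing $(b,c)\mapsto(b,bc)$ (resp.\ $(b,c)\mapsto(bc,b)$) is a bijection precisely because $D$ is a groupoid. The paper records exactly this as a commutative square with the reindexing isomorphism $\iota$, so no further comparison is needed.
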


\begin{proof}
Let us tackle the left-handed version.
Let $X,Y$ be objects of $|T|$. For each $i \in D_0$ we have a commutative square in $\C_i$:
$$\xymatrix{
\hT(X \otimes \hT(Y))_i \ar[d]_{\simeq}\ar[rrrr]^{(H^l_{X,Y})_i} &&&&\hT(X)_i\otimes \hT(Y)_i\ar[d]^{\simeq} \\
\coprod \limits_{k \labelto{b} j \labelto{a} i}\!\!\!\! T_a(X_j  \otimes T_b Y_k) \ar[rrr]_{\coprod\limits_{k \labelto{b} j \labelto{a} i} H^l_{a,b}(X_j,Y_k)}&&& \coprod\limits_{k \labelto{b} j \labelto{a} i} \!\!\!T_a(X_j)  \otimes T_{ab} X_k \ar[r]_\iota & \coprod\limits_{m \labelto{c} i\atop n \labelto{d} i}\!\!\! T_c X_m  \otimes T_d Y_n,
}$$
where $\hat{H}^l$ denotes the left fusion operator of the comonoidal monad $\hT$, and $\iota$ is induced by the map $I : (a,b) \mapsto (c,d)= (a,ab)$.
If $D$ is a groupoid, then $I$ is bijective and $\iota$ is an isomorphism. If in addition $T$ is left Hopf, then all $H^l_{a,b}$'s are isomorphisms, so
$\hat{H}^l$ is an isomorphism. Therefore, $\hT$ is left Hopf.
\end{proof}

As a result, theorems about Hopf monads can sometimes be applied to \su-exact polyads whose source is a groupoid. We will see an example of this in Section~\ref{sect-hopfmod}.

\pagebreak[5]

\section{Hopf modules and Hopf representations for Hopf polyads}\label{sect-hopfmodrep}

We have seen that hhe notion of modules over a monad can be generalized in two different ways to polyads: modules and representations. Consequently, Hopf modules can also be generalized in two different ways, namely Hopf modules and Hopf representations

\goodbreak

\subsection{Hopf modules for Hopf polyads}\label{sect-hopfmod}

We exploit the relation between Hopf monads and \su-exact Hopf polyads established in Section~\ref{sect-polyads-monads}  to define Hopf modules and formulate a decomposition theorem.

Let $T$ be a \su-exact comonoidal polyad.
A \emph{(left) $T$-Hopf module} consists in the following data:
\begin{enumerate}[(1)]
\item a $T$-module $(X,\rho)$,
\item a collection $\delta = (\delta_i)_{i \in D_0}$, where $\delta_i : X_i \to (\coprod_{a \in D_1} T_a \un)  \otimes X_i$ is a left coaction of the
coalgebra $\coprod_{a\in D_1} T_a(\un)$ on $X_i$,
\end{enumerate}
such that for $k \labelto{b} j \labelto{a} i$ in $D_2$, the following diagram commutes:
$$\xymatrix{
T_a X_j\ar[dd]_{\rho_a}\ar[r]^{T_a\delta_{j} \qquad}
    &  T_a(\coprod\limits_{k \labelto{b} j} T_b \un\otimes X_j) \ar[r]^\simeq
    &\ar[d]^{\coprod\limits_{b}(T_{ab}\un \otimes \rho_b) T^2_a (T_b\un,X_j)} \coprod\limits_{b} T_a( T_b\un \otimes X_j)\\
&&\coprod\limits_{b} T_{ab}\un \otimes X_i\ar[d]^{\text{can.}}\\
X_i  \ar[r]_{\delta_{i}\qquad}
    &  \coprod\limits_{k \labelto{c} i}T_{c} \un \otimes  X_i \ar[r]_\simeq
    & \coprod\limits_{k \labelto{a}i} T_{c}\otimes X_i\un.
}$$

A \emph{morphism of left $T$-Hopf modules} $f$ between two left $T$-Hopf modules $(X,\rho,\delta)$ and $(X',\rho',\delta')$ is a morphism of $T$-modules $f : (X,\rho) \to (X',\rho')$ such that for $i \in \D_0$ and
$a : j \to i$, $f_i$ is a morphism of comodules $(X_i,\delta_i) \to (X'_i,\delta'_i)$.

Thus, left $T$-Hopf modules form a category $\HM^l(T)$.

The \emph{coinvariant part} of a $T$-Hopf module $X=(X,\rho,\delta)$ is, if it exists, the object of $|T|$ defined by
$$X^{\coinv} = \bigl(\eq( \xymatrix{X_i \dar{\delta_{i}}{ {\eta_i}_\un\otimes X_i} & (\coprod\limits_{j \labelto{a}i} T_a\un)\otimes X_i} )\bigr)_{i \in D_0}.$$
The parallel pairs involved are coreflexive (with common retraction $ \coprod T^0_a\otimes X_i$), in particular the coinvariant
part is always defined if the categories $\C_i$ have coreflexive equalizers.  In that case, we dispose of a functor
$$\left\{
\begin{array}{llll}
?^\coinv :  &\HM^l(T) &\to &|T| \\
&  X &\mapsto &M^\coinv.
\end{array}
\right.$$

A \emph{weakly conservative polyad} is a polyad $T$ satisfying: for each $i \in D_0$, a morphism $f \in \C_i$ such that $T_a(f)$ is an isomorphism for each $(i \labelto{a} j) \in D_1$,
is an isomorphism.

A \su-exact polyad  $(D,\C,T,\mu,\eta)$ \emph{has conservative sums} if every for $i \in D_0$, the sum is conservative in $\C_i$ (that is, if $f$, $g$ are morphisms in $\C_i$ such that
$f \coprod g$ is an isomorphism, then $f$ and $g$ are isomorphims).

A \core-exact \su-exact  polyad is a \su-exact polyad $(D,\C,T,\mu,\eta)$
such that for $i \in D_i$, $\C_i$ admits equalizers of coreflexive pairs,
the tensor product of $\C_i$ preserves them,
finite sums and sums of at most $N^D_i$ terms preserve them, and
for $j \labelto{a} i$ in $D_1$, $T_a : \C_j \to \C_i$ preserves them.

\begin{thm}
Let $T$ be a  a \core-exact  \su-exact weakly conservative left Hopf polyad having conservative sums and whose source is a groupoid. Then the coinvariant part functor $\coinv : \HM(T) \to |T|$ is an equivalence of categories.
\end{thm}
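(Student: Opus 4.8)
The plan is to transport the statement to the fundamental theorem of Hopf modules for Hopf monads (\cite{BV2}, improved in \cite{BLV}), applied to the comonoidal monad $\hT$ attached to $T$ in Section~\ref{sect-polyads-monads}. Since $T$ is a \su-exact comonoidal polyad, Theorem~\ref{thm-poly-mono} provides a comonoidal monad $(\hT,\hat\mu,\hat\eta)$ on $|T|$ together with a monoidal isomorphism $\Mod(T)\simeq|T|_{\hT}$, under which $(\hT X)_i=\coprod_{k\labelto{b}i}T_bX_k$. As the source $D$ is a groupoid and $T$ is left Hopf, Theorem~\ref{thm-hopfpolyhopfmon} shows that $\hT$ is a left Hopf monad. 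Thus $\hT$ is a left Hopf comonoidal monad on the monoidal category $|T|$, and the proof amounts to recognizing $T$-Hopf modules as $\hT$-Hopf modules and then checking that $\hT$ satisfies the exactness and conservativity hypotheses under which the Hopf-monad theorem holds.

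First I would match the Hopf-module data. The coalgebra $\coprod_{a}T_a\un$ controlling the coaction of a $T$-Hopf module is, in the $i$-th component, exactly the coalgebra $(\hT\un)_i=\coprod_{k\labelto{b}i}T_b\un$ in $\C_i$, with coproduct induced by $\mu$ and counit induced by the morphisms $T^0_a$; globally this is the coalgebra $\hT\un$ of the comonoidal monad $\hT$. Under $\Mod(T)\simeq|T|_{\hT}$ the module part $(X,\rho)$ of a $T$-Hopf module corresponds to a $\hT$-module, while the collection $\delta=(\delta_i)$ is precisely a $\hT\un$-comodule structure on $X\in|T|$. The commuting square in the definition of a $T$-Hopf module, which couples $\rho$, $\delta$, the maps $T^2_a(T_b\un,X_j)$ and the products $\mu_{a,b}$, unwinds termwise, using that $\otimes$ and the functors $T_a$ distribute over the coproducts defining $\hT$, into the single compatibility axiom of a $\hT$-Hopf module: namely that the coaction is $\hT$-linear, where $\hT\un\otimes X$ carries the module structure built from $\hat\mu$ and the left fusion operator of $\hT$. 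This yields an isomorphism of categories $\HM^l(T)\simeq\HM^l(\hT)$. Furthermore it respects coinvariants: the equalizer defining $X^{\coinv}$ is computed componentwise against $\delta_i$ and ${\eta_i}_\un\otimes X_i$, and ${\eta_i}_\un$ is the $i$-th component of the unit $\hat\eta_\un\colon\un\to\hT\un$, so $?^{\coinv}$ corresponds to the coinvariants functor $M\mapsto M^{\coinv}=\mathrm{eq}(\delta,\,\hat\eta_\un\otimes M)$ of $\hT$.

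Next I would verify the hypotheses on $\hT$. Since $T$ is \core-exact, each $\C_i$ admits coreflexive equalizers, preserved by $\otimes$, by sums of at most $N^D_i$ terms, and by every $T_a$; hence $|T|$ admits coreflexive equalizers preserved by $\otimes$, and $\hT$, being assembled from the functors $T_b$ and sums of at most $N^D_i$ terms, preserves coreflexive equalizers. Moreover $\hT$ is conservative: if $f=(f_k)_k$ is such that $\hT f$ is an isomorphism, then each $(\hT f)_i=\coprod_{k\labelto{b}i}T_b(f_k)$ is an isomorphism, so by conservativity of sums every $T_b(f_k)$ is an isomorphism; fixing $k$, this means $T_b(f_k)$ is an isomorphism for every $b$ with source $k$, whence $f_k$ is an isomorphism by weak conservativity. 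Thus $\hT$ is a conservative left Hopf comonoidal monad preserving coreflexive equalizers, so the fundamental theorem of Hopf modules of \cite{BV2,BLV} applies: the coinvariants functor $\HM^l(\hT)\to|T|$ is an equivalence, with quasi-inverse the free Hopf module functor $V\mapsto\hT\un\otimes V$. Transporting along the isomorphism $\HM^l(T)\simeq\HM^l(\hT)$ then shows that $?^{\coinv}\colon\HM^l(T)\to|T|$ is an equivalence.

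I expect the main obstacle to be the matching step: one must check carefully that the elaborate compatibility square in the definition of a $T$-Hopf module becomes, after summing over all composable pairs and using that $\otimes$ and the $T_a$ distribute over the coproducts defining $\hT$, exactly the $\hT$-linearity of the coaction defining a $\hT$-Hopf module, the local fusion operators $H^l_{a,b}$ assembling into the left fusion operator of $\hT$ (as in the proof of Theorem~\ref{thm-hopfpolyhopfmon}). A secondary point is to confirm that the two conservativity hypotheses are exactly what make $\hT$ conservative, this being the additional input, beyond \core-exactness and the left Hopf property, required by the Hopf-monad theorem in the non-autonomous setting; the groupoid assumption on $D$ is essential both here and in Theorem~\ref{thm-hopfpolyhopfmon}, through the bijection $(a,b)\mapsto(a,ab)$ that makes $\hT$ a Hopf monad.
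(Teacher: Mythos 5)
Your proposal is correct and follows essentially the same route as the paper: reduce to the comonoidal monad $\hT$ via Theorem~\ref{thm-poly-mono}, use Theorem~\ref{thm-hopfpolyhopfmon} (groupoid source) to see that $\hT$ is left Hopf, identify $\HM^l(T)$ with $\HM^l(\hT)$ compatibly with coinvariants, check that the \core-exactness, conservative-sums and weak-conservativity hypotheses make $\hT$ a conservative left Hopf monad preserving coreflexive equalizers, and invoke Theorem~6.11 of \cite{BLV}. Your write-up is in fact somewhat more explicit than the paper's about the matching of the compatibility square and about where each hypothesis is used.
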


\begin{proof} Let $T=(D,\C,T,\mu,\eta)$ be a left Hopf polyad satisfying the conditions of the theorem, and let $\hT$ be the comonoidal monad associated with $T$ according to Theorem~\ref{thm-poly-mono}; it is in fact a left Hopf monad by Theorem~\ref{thm-hopfpolyhopfmon}.
A $T$-Hopf module $(X,\rho,\delta)$ can be viewed as a $\hT$-Hopf module. Indeed, the $T$-module $(X,\rho)$ can be viewed as a $\hT$-module according to Assertion (1) of Theorem~\ref{thm-poly-mono};
the data $\delta$ is a left coaction of the comonad $\hT \un$ on $X$, and the axiom expressing the compatibility of $\rho$ and $\delta$ translates into the compatibility of the action and coaction of a left $\hT$-Hopf module. Thus we have an equivalence of categories between $\HM^l(T)$ and the category $\HM^l(\hT)$ of left $\hT$-Hopf modules, which preserves coinvariant parts by construction.

Moreover, our exactness assumptions ensure that $|T|$ has coreflexive equalizers and $\hT$ preserves them. Lastly, $\hT$ is
conservative. Indeed, let $f$ be a morphism in $|T|$. If $\hT(f)$ is an isomorphism, then for every $i \in D_0$, $\coprod_{j\labelto{a}i} T_a(f_i)$ is an isomorphism, and since sums are conservative,
this implies that each $T_a(f_i)$ is an isomorphism. Since $T$ is weakly conservative, this implies that $f$ is an isomorphim.
We may therefore apply to $\hT$ Theorem~6.11. of \cite{BLV}, and conclude that $\HM^r(\hT)$ is equivalent to $|T|$ via the functor equivariant part, hence $\coinv :\HM^l(T) \to |T|$ is an equivalence.
\end{proof}

\subsection{Hopf representations for Hopf polyads}
We define Hopf representations of a Hopf polyad, and formulate a decomposition theorem.

Let $T = (D,\C,T,\mu,\eta)$ be a comonoidal polyad. The category $\Rep(T)$ of representations of $T$ (see Section~\ref{sect-rep-poly}) is monoidal, with tensor product defined by
$$(W,\rho) \otimes (W',\rho') = (W \otimes W', \rho''), \blabla{where} \rho''_{a,b} = (\rho_a \otimes \rho_b) T^2_a(W_a,W_b),$$
and unit object  $\un = ((\un)_{a \in D_1},(T^0_a)_{(a,b) \in D_1})$.

The forgetful functor $V_T : \Rep(T) \to |T|$ is strict (co)monoidal, and its left adjoint $L_T$ is comonoidal.

A \emph{left Hopf representation of $T$} is a data $(W,\rho,\delta)$, where
\begin{enumerate}[(1)]
\item $W = (W_a)_{a \in D_1}$, with $A_a \in \Ob(\C_j)$ for $j \labelto{a} i$ in $\D_1$,
\item $\rho= (\rho_{a,b})_{(a,b) \in D_2}$, where $\rho_{a,b}$ is a morphism $T_a W_b \to W_{ab}$ for $k \labelto{b} j \labelto{a} i$ in $D_2$,
\item $\delta = (\delta_a)_{a \in D_1}$, where $\delta_a : W_a \to T_a\un\otimes W_a$,
\end{enumerate}
with the following axioms:

\begin{enumerate}[(i)]
\item $(W,\rho)$ is a Hopf representation of $T$ (see Section~\ref{sect-rep-poly});
\item for $a \in D_1$, $(W_a,\delta_a)$ is a left comodule of the coalgebra $T_a\un$;
\item for $k \labelto{b} j \labelto{a} i$ in $D_2$, the following diagram commutes:
$$
\xymatrix@C=6em{
T_aW_b \ar[d]_{T_a\delta_b}\ar[rr]^{T_a\delta_b}& & W_{ab}\ar[d]^{\delta_{ab}}\\
T_a(T_b\un\otimes W_b)\ar[r]_{T^2_a(T_b\un,W_b)} & T_aT_b\un \otimes T_aW_a \ar[r]_{ {\mu_{a,b}}_\un\otimes\rho_{a,b}}& T_{ab}\un\otimes W_{ab}.
}
$$
\end{enumerate}
\begin{rem}
Since $L_T : |T| \to \Rep(T)$ is comonoidal, $L_T(\un)$ is a coalgebra in $\Rep(T)$, and conditions (i), (ii), (iii) mean that $(W,\rho)$ is an object of $\Rep(T)$ and
$\delta$ is a left coaction of $L_T(\un)$ on $(W,\rho)$.
\end{rem}

Left Hopf representations of $T$ form a category $\HRep^l(T)$, the morphisms between two Hopf representations being representation morphisms which are at the same time comodule morphisms.

If $X$ is an object of $|T|$,  we define a right Hopf representation $\mathfrak{h}^l$ by setting:
$$\mathfrak{h}^l(X) = ((T_a X_{s(a)})_{a \in D_1},({\mu_{a,b}}_{s(b)})_{(a,b) \in D_2},(T^2_a(\un, X_{s(a)}))_{a \in D_1}).$$
Conversely, observe that if $W = (W,\rho,\delta)$ is a left Hopf representation of $T$, then for each $i \in D_0$, $(W_i, \rho_{i,i},\delta_i)$ is a left Hopf module of the bimonad $T_i$.

We say that $(W,\rho,\delta)$ \emph{has a coinvariant part} if for each $i$,  $(W_i, \rho_{i,i},\delta_i)$ has a coinvariant part $W^\coinv_i$ - that is, an equalizer of the parallel pair:
$$W^\coinv_i = \eq(\xymatrix{W_i \dar{ {\eta_i}_\un \otimes W_i }{\delta_i} & T_i\un\otimes W_i}).$$
If such is the case, the object $W^\coinv = ({W_i}^\coinv)_{i \in D_0}$ of $|T|$ is called the \emph{coinvariant part of $W$}.
We say that $T$ \emph{preserves coinvariant parts} if for every $j \labelto{a} i$ in $D_1$, $T_a : \C_j \to \C_i$ preserves the equalizer $W_j$.

We say that a polyad $T$ with source $D$ is \emph{conservative} if for every $a \in D_1$, the functor $T_a$ is conservative.

\begin{thm}\label{thm-str-hopfrep}
Let $T$ be a left Hopf polyad. Assume that the  left Hopf representations of $T$ have coinvariant parts and $T$ preserves them.
Then the following assertions are equivalent:
\begin{enumerate}[(i)]
\item The functor $$\mathfrak{h}^l\co \left\{ \begin{array}{ccl} |T| & \to & \HRep^l(T) \\ X & \mapsto &\mathfrak{h}^l(X) \end{array} \right. $$ is an equivalence of categories;
\item $T$ is conservative,
\end{enumerate}
If these hold, the functor `coinvariant part' $?^\coinv : \HRep^l(T) \to |T|$ is quasi-inverse to $\mathfrak{h}^l$.
\end{thm}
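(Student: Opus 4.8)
The plan is to reduce the statement to the fundamental theorem of Hopf modules for each of the Hopf monads $T_i$, and then to glue the resulting fibrewise decompositions along the morphisms of $D$ using the invertibility of the fusion operators. First I would record the structural observation isolated just before the statement: for every $i\in D_0$ the endofunctor $T_i$ is a Hopf monad, since its fusion operator is $H^l_{\id_i,\id_i}$, invertible because $T$ is left Hopf; and for a left Hopf representation $(W,\rho,\delta)$ the triple $(W_i,\rho_{i,i},\delta_i)$ is a left $T_i$-Hopf module. Under the standing hypotheses (coinvariant parts exist and are preserved by the $T_a$), the fundamental theorem of Hopf modules for $T_i$ (the version of \cite{BLV} used in Section~\ref{sect-hopfmod}) gives an equivalence between left $T_i$-Hopf modules and $\C_i$, with the coinvariant functor as one adjoint and the free Hopf module functor $Y\mapsto (T_iY,{\mu_{i,i}}_Y,T^2_i(\un,Y))$ as a quasi-inverse.

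Next I would exhibit $?^\coinv$ and $\mathfrak{h}^l$ as an adjoint pair and reduce the theorem to the invertibility of its unit and counit. The unit $\zeta\co \id_{|T|}\to (\mathfrak{h}^l(-))^\coinv$ sends $X$ to the canonical identification of $X_i$ with the coinvariants of the free Hopf module $T_iX_i$; by the fibrewise theorem $\zeta$ is always an isomorphism, independently of conservativity. The counit $\xi\co \mathfrak{h}^l((-)^\coinv)\to \id$ has, for $a\co j\to i$, component $\xi_a=\rho_{a,\id_j}\,T_a(\iota_j)\co T_aW_j^\coinv\to W_a$, where $\iota_j\co W_j^\coinv\to W_j$ is the coinvariant inclusion. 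Thus (i) amounts to the assertion that $\xi$ is an isomorphism, and the argument comes down to analysing $\xi_a$. For $a=\id_i$ the map $\xi_{\id_i}$ is exactly the counit of the fibrewise Hopf module equivalence, hence an isomorphism with no further hypothesis.

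For the implication (ii) $\Rightarrow$ (i) I would treat the off-diagonal components $\xi_a$. Using the diagonal decomposition $W_j\cong T_jW_j^\coinv$ supplied by the fibrewise theorem, together with the associativity relation $\rho_{a,\id_j}\,{\mu_{a,\id_j}}=\rho_{a,\id_j}\,(T_a\rho_{j,j})$ and the compatibility axiom (iii) of Hopf representations (which expresses $\rho_{a,\id_j}$ as a comodule map along ${\mu_{a,\id_j}}_\un\co T_aT_j\un\to T_a\un$), one identifies $\xi_a$ with the image under $T_a$ of the fibrewise counit at $j$, corrected by the fusion operators $H^l_{a,b}$; since $T$ is left Hopf all these operators are invertible and $T_a$ preserves the coinvariant equalizer, so $\xi_a$ is assembled from isomorphisms once the relevant comparison morphism is detected by a conservative functor. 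This is exactly where conservativity of $T_a$ enters: it guarantees that a morphism becoming invertible after the fibrewise/fusion reduction is already invertible, so that $\xi_a$ is an isomorphism. With $\zeta$ and $\xi$ both invertible, $\mathfrak{h}^l$ and $?^\coinv$ are mutually quasi-inverse, which also yields the last sentence of the statement.

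For the converse (i) $\Rightarrow$ (ii) I would use that an equivalence $\mathfrak{h}^l$ reflects isomorphisms, and feed it morphisms supported in a single object of $|T|$: for $g\co X\to X'$ equal to the identity outside the source object $j$ of $a$, one has $\mathfrak{h}^l(g)$ invertible if and only if $T_b(g_j)$ is invertible for every $b$ with source $j$, so that conservativity of $\mathfrak{h}^l$ forces the componentwise reflection of isomorphisms at $j$; combining this with the fusion relations of Proposition~\ref{prop-fusion} (notably assertion (6), ${\mu_{b,a}}=(T^0_b\otimes T_{ba})H^l_{b,a}(\un,-)$) would upgrade it to conservativity of each individual $T_a$. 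I expect the main obstacle to be precisely this interplay between the componentwise (``weak'') conservativity that the equivalence sees directly and the full conservativity of each $T_a$ asserted in (ii): making the off-diagonal comparison $\xi_a$ genuinely invertible in direction (ii) $\Rightarrow$ (i), and extracting conservativity of the single functors $T_a$ in direction (i) $\Rightarrow$ (ii), are the two delicate points, whereas the diagonal part and full faithfulness are formal consequences of the fibrewise Hopf module theorem.
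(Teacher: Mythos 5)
Your overall strategy -- reduce to the $D=*$ case (Theorem~6.11 of \cite{BLV}) applied fibrewise to the Hopf monads $T_i$, then treat the off-diagonal counit components using the invertible fusion operators and the preservation of the coinvariant equalizers -- is the paper's strategy, and your identification of the unit and counit of the adjunction is correct. But there are two genuine gaps. First, the claim that the unit $\zeta$ is an isomorphism ``independently of conservativity'' (and likewise that $\xi_{\id_i}$ is an isomorphism ``with no further hypothesis'') is false: the fibrewise decomposition theorem requires $T_i$ to be conservative, and without that the unit can fail. Already for the zero coalgebra $H=0$ in $\vect_\kk$ the zero monad $H\otimes ?$ is a Hopf monad, coinvariant parts exist and are preserved, yet the free Hopf module on $X$ is $0$ and its coinvariant part is $0\neq X$. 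Consequently your reduction ``(i) amounts to the assertion that $\xi$ is an isomorphism'' does not stand; in the paper, conservativity of the $T_i$'s (a consequence of (ii)) is precisely what makes the diagonal part work.

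Second, for the off-diagonal components the sentence ``one identifies $\xi_a$ with the image under $T_a$ of the fibrewise counit at $j$, corrected by the fusion operators\dots detected by a conservative functor'' is not an argument, and I do not see how to make that identification. The paper's mechanism is concretely different: since $T$ is left Hopf, the morphism $(T_a\un\otimes\rho_{a,\id_j})T^2_a(\un,W_j)\colon T_aW_j\to T_a\un\otimes W_a$ is invertible; composing its inverse with the coaction $\delta_a$ gives a morphism $W_a\to T_aW_j$ which equalizes the image under $T_a$ of the pair defining $W_j^\coinv$, hence factors through $T_a(\iota_j)$ because $T_a$ preserves that equalizer, and the resulting morphism is checked to be a two-sided inverse of the counit component $\rho_{a,\id_j}T_a(\iota_j)$. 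No conservativity of the individual $T_a$'s intervenes there. Finally, your observation that the direct argument for (i)$\Rightarrow$(ii) only yields the ``weak'' statement (invertibility of $g_j$ once \emph{all} $T_b(g_j)$ with $s(b)=j$ are invertible) rather than conservativity of each single $T_a$ is perceptive -- the paper's own two-line argument for that implication has the same defect -- but your proposed repair via Proposition~\ref{prop-fusion}, Assertion~(6), is not carried out and it is not clear that it closes this gap.
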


\begin{proof}
Note that Theorem~\ref{thm-str-hopfrep} is true for $D=*$: in that case, it results immediately from Theorem~6.11. of \cite{BLV}, that is, the decomposition theorem for the left Hopf modules of a right Hopf monad.

Let us fist show (i) $\implies$ (ii). Assume that $?^H$ is an equivalence. In particular, it is conservative. On the other hand, the forgetful functor $V_H : \HRep(T) \to |T|$ is conservative.

Let $f_0$ be a morphism of $\C$ such that $T_a(f)$ is an isomorphism. Define a morphism $\varphi$ of $|T|$ by setting  $\varphi_{i_0} = f$ and  $\varphi_i = \id_\un$ if $i \neq i_0$.
Then $V_H(\varphi^H)$ is an isomorphism, and so is $\varphi$ itself, which implies that $f = \varphi_{i_0}$ is an isomorphism. Thus $T_a$ is conservative.

Now let us show (ii) $\implies$ (i). We verify that the  two functors: $\mathfrak{h}^l : |T| \to \HRep(T)$ and $?^\coinv : \HRep(T) \to |T|$ are inverse to one another.

Let $X$ be an object of $|T|$.
Then $(\mathfrak{h}^l X)^{\coinv}=((\mathfrak{h}^l{X_i})^\coinv)_{i \in D_0}$, and, in view of the initial remark, the right $T_i$-Hopf module $\mathfrak{h}^l X_i$ has coinvariant part $X_i$, hence
$$(\mathfrak{h}^l X)^{\coinv} \simeq X.$$

Let $W = (W,\rho,\delta)$ be a right Hopf representation of $T$. Denote by $\iota_j$ the equalizer morphism ${W^\coinv}_j \to T_jW_j$.
Let $j \labelto{a} i$ be a morphism of $D$, and consider the morphism
$$\theta_a = \rho_{a,j}{\mu_{a,j}}_{W_j} T_a(\iota_j) : T_a W^\coinv_j \to W_a.$$

One verifies that $\theta = (\theta_a)_{a \in D_1}$ is a morphism $\mathfrak{h}^l(W^\coinv) \to W$ in $\HRep(T)$.
On the other hand, since $T$ is left Hopf, the morphism
$$( T_a\un\otimes\rho_{a,j})T^2_a(\un, W_j) : T_a(W_j) \to T_a\un \otimes W_a$$
is an isomorphism. Composing its inverse with $\delta_a : W_a \to T_a\un\otimes W_a$, we obtain a morphism $\xi_a : W_a \to T_a W_j$.
One verifies that $\xi_a$ equalizes the image by $T_a$ of the pair $( {\eta_j}_\un \otimes W_j , \delta_j)$, whose equalizer is $W^\coinv_j$, and since $T_a$ preserves such equalizers,
$\xi_a$ factors uniquely through $T_a(\iota_j)$, defining a morphism $\tilde{\xi}_a : W_a \to T_a(W^\coinv_j)$. One verifies that $\tilde{\xi}_a$ is inverse to $\theta_a$, which shows
$$\mathfrak{h}^l(W^{\coinv}) \simeq W,$$ hence the theorem.
\end{proof}

\section{Hopf polyalgebras, representable Hopf polyads, and Hopf categories in the sense of Batista, Caenepeel, Vercruysse}\label{sect-hopf-cats}

In this section, we interpret Hopf Categories, as defined in \cite{BCV}, as special cases of Hopf polyads.

\subsection{Hopf polyalgebras}
Let $\V$ be a monoidal category.
A \emph{$D$-polyalgebra in $\V$} is a set of data $M = (M,m,u)$, where $M = (M_a)_{a \in D_1}$ is a family of objects of $\V$ indexed by $D_1$,
$m = (m_{a,b})_{(a,b) \in D_2}$ and $u = (u_i)_{i \in D_0}$ are families of morphisms in $\V$, $m_{a,b} : M_a \otimes M_b \to M_{ab}$, and $u_i : \un \to M_i$, satisfying:
$$m_{ab,c}(m_{a,b} \otimes  M_c) = m_{a,bc}(M_a \otimes m_{b,c}) \blabla{and} m_{a,i}(M_a \otimes u_i).$$

\begin{exa} Assume $D = *_X$, where $X$ is a set (recall that the objects of $*_X$ are the elements of $X$, with exactly one morphism between any two objects). Then
$D$-polyalgebras over $\V$ are nothing but $\V$-enriched categories with set of objects $X$.
\end{exa}

Now assume that $\V$ is braided, with braiding $\tau$. In that case, the category $\Coalg(\V)$ of coalgebras in $\V$ is monoidal.

A \emph{$D$-polybialgebra in $\V$} is a $D$-polyalgebra in $\Coalg(V)$.
We may view it as a set of data $M = (M,m,u,\Delta,\eps)$, where $(M,m,u)$ is a $D$-polyalgebra in $\V$ and $\Delta = (\Delta_a)_{a \in \D_1}$, $\eps = (\eps_a)_{a \in D_1}$
are collections of morphisms such that $(M_a,\Delta_a,\eps_a)$ is a coalgebra for each $a \in D_1$, and the morphisms $\mu_{a,b}: M_a \otimes M_b \to M_{a,b}$ and $\eta_i : \un \to M_i$
are coalgebra morphisms.

The \emph{left} and \emph{right fusion operators} of a $D$-polyalgebra $H^l$ and $H^r$ are families of morphisms indexed by $D_2$ defined as follows:
$$H^l_{a,b} = (M_a \otimes m_{a,b})(\Delta_a \otimes M_b) \blabla{and} H^r_{a,b} = (m_{a,b} \otimes M_a)(M_a \otimes \tau_{M_a,M_b})(\Delta_a \otimes M_b).$$

A \emph{$D$-Hopf polyalgebra} is a $D$-polybialgebra whose fusion operators are isomorphisms.

\begin{exa}
For $D = *_X$,
$D$-polybialgebras are exactly $\Coalg(\V)$-enriched categories,
and $D$-Hopf polyalgebra are exactly Hopf categories in the sense of Batista, Caenepeel, Vercruysse \cite{BCV}.
\end{exa}

%

\pagebreak[5]
\subsection{Representable Hopf polyads}

Let $\V$ be a monoidal category.
The assignment $X \mapsto X \otimes ?$ defines an embedding of $\V$ as a monoidal subcategory of $\EndFun(\V)$. Let us call endofunctors of the form $X \otimes ?$ \emph{representable endofunctors}, and natural transformations of the form $f \otimes ?$ between such endofunctors, \emph{representable transformations}. (More precisely, one should rather say `representable on the left', but we will always consider the left-handed case).

A \emph{representable polyad over $\V$} is a polyad of the form $(D,\V^{D_0}, T, \mu, \eta)$ where the functors  $T_a : \V \to \V$ ($a \in D_1$) and the natural transformations
$\mu_{a,b}$ ($(a,b) \in D_1$) and $\eta_i$ ($i \in D_0)$ are representable.

Representable polyads can be represented by polyalgebras.

\begin{lem}
Given a $D$-polyalgebra in $\V$, we define a representable polyad $M \otimes ?$ over $\V$ by
$$M \otimes ? = (D,\V^{D_0},(M_a \otimes ?)_{a \in D_1}, (m_{a,b} \otimes ?)_{(a,b) \in D_2},(u_i \otimes ?)_{i \in D_0}).$$
The assignment $M \mapsto M \otimes ?$ maps polyalgebras bijectively to representable polyads.
\end{lem}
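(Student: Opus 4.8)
The plan is to treat the whole statement as a transport of structure along the monoidal embedding $e\colon \V \to \EndFun(\V)$, $X \mapsto X \otimes ?$, discussed just above. The decisive properties of $e$ are that it is fully faithful and strong monoidal: it carries the tensor product of $\V$ to composition of endofunctors and horizontal composition of morphisms to the corresponding composite of natural transformations, so that $e(X \otimes Y) = e(X)e(Y)$ and $e(f \otimes g) = e(f) \ast e(g)$ (where $\ast$ is horizontal composition). Consequently the representable endofunctors are exactly the objects in the image of $e$, the representable transformations are exactly the morphisms in its image, and each is represented by a \emph{unique} object, respectively morphism, of $\V$. I would record these facts first, since everything below is their formal consequence.

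First I would check that $M \otimes ?$ really is a polyad when $M = (M,m,u)$ is a $D$-polyalgebra. Writing $T_a = e(M_a)$, $\mu_{a,b} = e(m_{a,b})$ and $\eta_i = e(u_i)$, functoriality and monoidality of $e$ convert the two sides of the polyad associativity axiom into
$$\mu_{ab,c}(\mu_{a,b}\,\id_{T_c}) = e\bigl(m_{ab,c}(m_{a,b}\otimes M_c)\bigr), \qquad \mu_{a,bc}(\id_{T_a}\,\mu_{b,c}) = e\bigl(m_{a,bc}(M_a\otimes m_{b,c})\bigr),$$
so that axiom (i) for $M\otimes ?$ is precisely the image under $e$ of the polyalgebra associativity $m_{ab,c}(m_{a,b}\otimes M_c) = m_{a,bc}(M_a\otimes m_{b,c})$, and therefore holds. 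In the same manner axiom (ii) is the image of the unit axioms $m_{a,j}(M_a\otimes u_j) = \id_{M_a} = m_{i,a}(u_i\otimes M_a)$. This shows the assignment is well defined, landing in representable polyads.

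The converse is read off the same two displayed identities. Given a representable polyad $(D,\V^{D_0},T,\mu,\eta)$, by definition each $T_a$, $\mu_{a,b}$, $\eta_i$ is representable, so full faithfulness of $e$ yields unique $M_a \in \Ob(\V)$, $m_{a,b}\colon M_a\otimes M_b \to M_{ab}$ and $u_i\colon \un \to M_i$ with $T_a = e(M_a)$, $\mu_{a,b}=e(m_{a,b})$, $\eta_i=e(u_i)$; here one uses $T_aT_b = e(M_a\otimes M_b)$ to identify the domain of $\mu_{a,b}$. Applying faithfulness of $e$ to the displayed equalities, now read from right to left, turns axioms (i) and (ii) into the polyalgebra associativity and unit axioms, so $(M,m,u)$ is a $D$-polyalgebra whose image is the given polyad. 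Uniqueness of the representing data gives injectivity and this construction gives surjectivity, so $M \mapsto M \otimes ?$ is a bijection. The only point needing care — and the main, mild, obstacle — is coherence bookkeeping: in a non-strict $\V$ the endofunctor $T_aT_b$ sends $X$ to $M_a\otimes(M_b\otimes X)$ rather than $(M_a\otimes M_b)\otimes X$, so $\mu_{a,b}$ equals $m_{a,b}\otimes ?$ only after inserting the associator $\alpha_{M_a,M_b,?}$, and one must confirm these associators cancel consistently throughout the axioms. By Mac Lane coherence this is automatic, and one may simply assume $\V$ strict; the rest is the purely formal observation that $e$ is a fully faithful monoidal functor.
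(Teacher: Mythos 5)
Your proposal is correct; the paper states this lemma without proof, treating it as an immediate consequence of the monoidal embedding $X \mapsto X \otimes ?$ of $\V$ into $\EndFun(\V)$, and your transport-of-structure argument is exactly the verification being left to the reader. One small caveat: that embedding is faithful but not full in general (there can be natural transformations $X \otimes ? \to Y \otimes ?$ not of the form $f \otimes ?$), so ``fully faithful'' is an overstatement --- but your argument only uses faithfulness together with the fact that a representable polyad's structure transformations lie in the image \emph{by definition}, so nothing breaks.
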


Now assume that $\V$ is braided, with braiding $\tau$.

Given a coagebra $(C,\Delta,\eps)$ in $\V$, the representable endofunctor $C \otimes ?$ admits a comonoidal structure,
given by
\begin{align*}
&(C \otimes ?)^2_{X,Y} = (C \otimes \tau_{C,X} \otimes Y)(\Delta \otimes X \otimes Y): C \otimes X \otimes Y \to C \otimes X \otimes C \otimes Y,\\
&(C \otimes ?)^0 = \eps.
\end{align*}

Such comonoidal endofunctors are called \emph{representable comonoidal endofunctors}.

A comonoidal polyad, or a Hopf polyad, of the form $(D,\V, T, \mu, \eta)$ is \emph{representable} if it is representable as a polyad and the $T_a$'s are representable comonoidal endofunctors.

Representable comonoidal polyads and Hopf polyads can be represented by polybialgebras and Hopf polyalgebras.

\begin{lem}\label{lem-polypoly}
Given a $D$-polyalgebra $M$ in $\V$, the data 
$$M \otimes ? = (D,\V^{D_0},(M_a \otimes ?)_{a \in D_1}, (m_{a,b} \otimes ?)_{(a,b) \in D_2},(u_i \otimes ?)_{i \in D_0})$$
is a representable comonoidal polyad over $\V$,
the comonoidal structure on $M_a \otimes ? : \V \to \V$ being defined by the coalgebra structure of $M_a$.

Moreover, the assignment $M \mapsto M \otimes ?$ maps polybialgebras bijectively to representable comonoidal polyads, and
in this correspondance, Hopf polyalgebras correspond 1-1 with representable Hopf polyads.
\end{lem}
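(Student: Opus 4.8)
The plan is to reduce everything to a single-object ``dictionary'' between coalgebras in $\V$ and representable comonoidal endofunctors of $\V$, and then graft this onto the bijection between $D$-polyalgebras and representable polyads already established in the previous lemma. Throughout, $M$ is understood to be a $D$-polybialgebra, i.e.\ a $D$-polyalgebra in $\Coalg(\V)$ (the reference to the coalgebra structure of $M_a$ requires this), so each $M_a$ carries a coalgebra structure $(M_a,\Delta_a,\eps_a)$ and the $m_{a,b}$, $u_i$ are coalgebra morphisms in $\V$. First I would record two elementary facts about representable data. (A) For coalgebras $C,C'$, a representable transformation $f\otimes ?\co C\otimes ?\to C'\otimes ?$ is comonoidal if and only if $f\co C\to C'$ is a morphism of coalgebras: the counit condition $G^0\varphi_\un=F^0$ reads $\eps' f=\eps$, and the comultiplication condition $G^2\,\varphi_{X\otimes Y}=(\varphi_X\otimes\varphi_Y)\,F^2$ reduces, after using naturality of $\tau$ to pull $f$ through the braiding, to $\Delta' f=(f\otimes f)\Delta$. (B) The composite of the representable comonoidal endofunctors attached to $C$ and $C'$ is the representable comonoidal endofunctor attached to the tensor-product coalgebra $C\otimes C'$ of $\Coalg(\V)$; this is a hexagon-and-naturality computation identifying the composite comonoidal structure with the comultiplication $(C\otimes\tau_{C,C'}\otimes C')(\Delta_C\otimes\Delta_{C'})$ and counit $\eps_C\otimes\eps_{C'}$. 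I would also note that the coalgebra structure of $C$ is recovered from the comonoidal structure of $C\otimes ?$ by evaluating at $X=Y=\un$, so coalgebra structures biject with representable comonoidal structures.

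With (A) and (B) in hand the first two assertions are immediate. By the preceding lemma the underlying data of $M\otimes ?$ is already a representable polyad, so it only remains to match the comonoidal decorations. The comonoidal structure on each $T_a=M_a\otimes ?$ is exactly the coalgebra structure of $M_a$; by (B) the induced comonoidal structure on $T_aT_b$ is that of the coalgebra $M_a\otimes M_b$, so by (A) the transformation $\mu_{a,b}=m_{a,b}\otimes ?$ is comonoidal if and only if $m_{a,b}$ is a coalgebra morphism, and likewise $\eta_i=u_i\otimes ?$ is comonoidal if and only if $u_i\co\un\to M_i$ is one. These are precisely the conditions making $M$ a $D$-polyalgebra in $\Coalg(\V)$, i.e.\ a $D$-polybialgebra; and since every representable comonoidal structure arises uniquely from coalgebra data, the assignment $M\mapsto M\otimes ?$ is a bijection from $D$-polybialgebras onto representable comonoidal polyads.

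For the Hopf part I would compute the fusion operators of $M\otimes ?$ and match them with those of the polyalgebra. Expanding $H^l_{a,b}(X,Y)=(T_aX\otimes {\mu_{a,b}}_Y)\,T_a^2(X,T_bY)$ with the representable structures gives $(M_a\otimes X\otimes m_{a,b}\otimes Y)(M_a\otimes\tau_{M_a,X}\otimes M_b\otimes Y)(\Delta_a\otimes X\otimes M_b\otimes Y)$. Conjugating by the braiding isomorphisms $M_a\otimes\tau_{X,M_{ab}}\otimes Y$ and $M_a\otimes\tau_{X,M_b}\otimes Y$ identifies this with $H^l_{a,b}\otimes\id_X\otimes\id_Y$, where $H^l_{a,b}=(M_a\otimes m_{a,b})(\Delta_a\otimes M_b)$ is the left fusion operator of the polyalgebra (and symmetrically for $H^r$). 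Since a representable transformation $f\otimes ?$ is invertible if and only if $f$ is (evaluate at $\un$), and braidings are invertible, $H^l_{a,b}(X,Y)$ is an isomorphism for all $X,Y$ if and only if $H^l_{a,b}$ is. Hence $M\otimes ?$ is a left (resp.\ right, resp.\ two-sided) Hopf polyad if and only if $M$ is a left (resp.\ right, resp.\ two-sided) Hopf polyalgebra, so the bijection of the second assertion restricts to the claimed one.

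The verifications of (A) and (B), and the fusion-operator identification, are formal transports of structure; the only place demanding genuine care is the braided bookkeeping inside them, namely keeping track of exactly which strand is braided past which and invoking the hexagon axioms and naturality of $\tau$ in the correct order. I expect this strand-tracking to be the sole real obstacle, everything else following once the dictionary (A)--(B) is secured.
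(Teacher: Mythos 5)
Your proposal is correct. The paper in fact prints no proof of Lemma~\ref{lem-polypoly} (nor of the preceding lemma on representable polyads); it treats the statement as a routine transport of structure, and your argument supplies exactly the verification being elided: the dictionary (A) between coalgebra morphisms and comonoidal representable transformations, the identification (B) of the composite comonoidal structure on $(C\otimes ?)(C'\otimes ?)$ with the one attached to the tensor-product coalgebra $C\otimes C'$ (which is what makes ``$m_{a,b}\otimes ?$ comonoidal'' equivalent to ``$m_{a,b}$ a coalgebra map'', i.e.\ to the polybialgebra axioms), and the matching of fusion operators. One small correction in the last step: the two maps relating $H^l_{a,b}(X,Y)$ to $H^l_{a,b}\otimes X\otimes Y$ are not both instances of the braiding in the orientation you wrote; working it out with the hexagon and naturality one gets
$$H^l_{a,b}(X,Y)=(M_a\otimes\tau_{M_{ab},X}\otimes Y)\,(H^l_{a,b}\otimes X\otimes Y)\,(M_a\otimes\tau^{-1}_{M_b,X}\otimes Y),$$
so one crossing is a $\tau$ and the other a $\tau^{-1}$ (these differ in a genuinely braided, non-symmetric $\V$). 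This is harmless for your conclusion, since you only use that the outer maps are isomorphisms, but it is precisely the kind of strand-bookkeeping you flagged, and it is worth recording the correct form.
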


\begin{exa}
 Let $X$ be a set. A $X_*$-polybialgebra, or $\Coalg(\V)$-enriched category with set of objects $X$, corresponds via Lemma~\ref{lem-polypoly} to a representable comonoidal polyad over $\V$, and in this dictionary,
 a Hopf category in the sense of \cite{BCV} corresponds with a representable Hopf polyad over $\V$. Note that the notion of a Hopf module over a Hopf category considered in  \cite{BCV}
corresponds with our notion of a Hopf representation of a Hopf polyad.
\end{exa}

\pagebreak[5]

\section{The center construction}\label{sect-center}

We construct a Hopf polyad associated with a graded tensor category, and, by applying to it the fundamental theorem of Hopf polyads, we recover
a result about the center of a graded tensor category due to Turaev and Virelizier \cite{TV}.

\subsection{The centralizer of a graded category}

Let $\kk$ be a field. A \emph{bounded $\kk$-linear abelian category} is a $\kk$-linear abelian category $\A$ such that there exists a finite dimensional $\kk$-algebra $A$ and a $\kk$-linear equivalence of categories $\A \simeq \mod(A)$ (where $\mod(A)$ denotes the category of finite-dimensional right $\kk$-modules).

A \emph{$\kk$-tensor category}, or just \emph{tensor category}, is a rigid monoidal category $\C$ endowed with a structure of $\kk$-linear abelian category such that the tensor product is linear in each variable,
$\End(\un) = \kk$, $\Hom$-spaces are finite dimensional and all objects have finite length.

%

%
%
%
%
%

Let $G$ be a group. A \emph{$G$-graded tensor category} is a tensor category $\C$ endowed with a family $(\pi_g)_{g \in G}$ of natural endomorphisms
of $\id_\C$, satisfying the following conditions:
\begin{enumerate}[1)]
\item for $X \in \Ob(\C)$, the set $\{g \in G \mid \pi_g(X) \neq 0\}$ is finite, and: $$\sum_{g \in G} \pi_g(X) = \id_X,$$
\item the $\pi_g$'s are orthogonal idempotents, \emph{i.e.} for $g,h \in G$, $\pi_g \pi_h = \delta_{g,h} \pi_g$,
\item for $g, h \in G$ and $X,Y \in \Ob(\C)$, $\pi_{gh}(X\otimes Y) (\pi_g(X) \otimes \pi_h(Y)) = \pi_g(X) \otimes \pi_g(Y)$,
\item $\pi_1(\un) = \id_\un$;
\end{enumerate}
If $\C$ is a $G$-graded tensor category, we say that an object $X$ of $\C$ is \emph{homogeneous of degree $g$} if $\pi_g(X) = \id_X$. We denote by $\C_g$ the full abelian subcategory of $\C$ of homogeneous objects of degree $g$. Then (1) states that any object is a finite direct sum of homogeneous objects of different degrees, (2), that such a decomposition is essentially unique,
(3), that $\C_g \otimes \C_h \subset \C_{gh}$, and (4), that $\un$ is homogeneous of degree $1$.

We say that the $G$-graded tensor category $\C$ is locally bounded if for each $g$, $\C_g$ is bounded.

Let $G$ be a group and let $\C$ be a locally bounded $G$-graded tensor category.
For $X \in \Ob(\C)$ and $g \in G$ the coend $Z_g(X) = \int^{Y \in \C_g} Y \otimes X \otimes \rdual{Y}$  exists, because $\C_g$ is bounded.
Denote by $j^g_{X,Y} : Y \otimes X \otimes \rdual{Y} \to Z_g(X)$ ($Y \in \Ob(\C_g)$) its universal dinatural transformation.

The assignment $X \mapsto Z_g(X)$ defines an endofunctor of $\C$, denoted by $Z_g$.
Define further: 
$${\mu_{g,h}}_X : Z_g Z_h(X) \to Z_{gh}X, \qquad \eta_X : X \to Z_1(X),$$
$$Z^2_g(X_1,X_2) : Z_g(X_1 \otimes X_2)  \to Z_g(X_1) \otimes Z_g(X_2), \qquad Z^0_g : Z_g\un \to \un,$$
where $X, X_1, X_2$ are objects of $\C$ and $g,h$ elements of $G$, by the formulae:
$${\mu_{g}}_X j^g_{X,Y_1}(Y_1 \otimes j^g_{X,Y_2} \otimes \rdual{Y_1}),\qquad \eta_X = j^1_{X, \un}, \qquad Z^0_g j_{\un,Y} = e_Y,$$
$$Z^2_g(X_1,X_2) j^g_{X_1 \otimes X_2,Y} = (j^g_{X_1,Y} \otimes j^g_{X_2,Y})(Y \otimes X_1 \otimes h_Y \otimes X_2 \otimes \rdual{Y}),$$
where $e_Y : Y \otimes \rdual{Y} \to \un$ is the evaluation, and $h_Y : \un \to \rdual{Y} \otimes Y$ the coevaluation of the duality $(Y,\rdual{Y})$.
Pictorially:
\begin{center}
\includegraphics[width=1\textwidth]{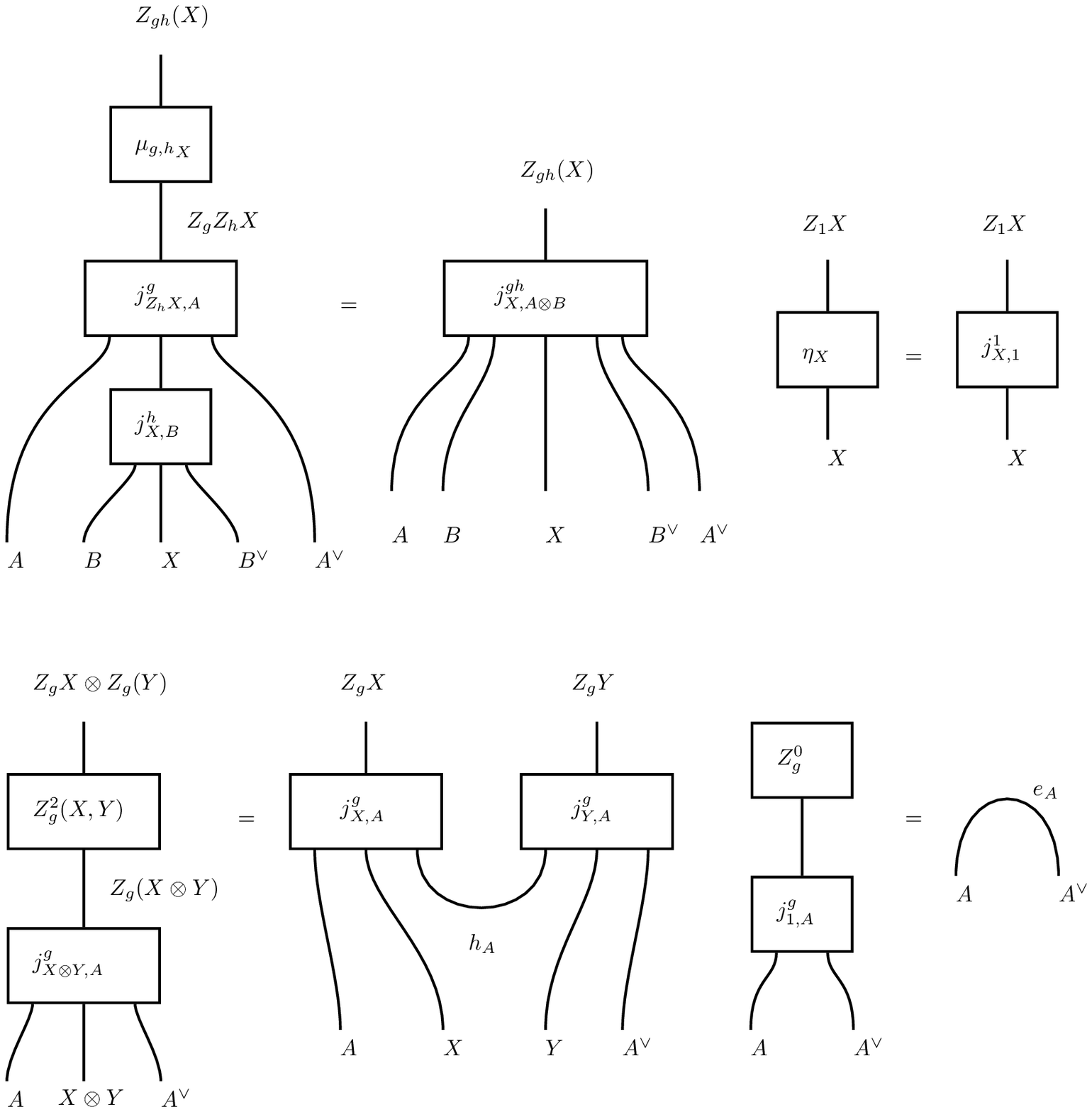}
\label{fig1}
\end{center}

\begin{thm}\label{thm-center-graded}
Let $G$ be a group and let $\C$ be a locally bounded $G$-graded tensor category. Then:
\begin{enumerate}[(1)]
\item The data $Z = (\ul{G},\C,(Z_g)_{g \in G},(\mu_{g,h})_{(g,h) \in G^2},\eta)$ is a \re-exact Hopf polyad;
\item $\Mod(Z)$ is isomorphic to the center $\Z(\C)$, and $\C_{Z_1}$ is isomorphic to the relative center $\Z_{\C_1}(\C)$ as tensor categories.
\end{enumerate}
Assume moreover that for every $g \in G$, $\C_g \neq 0$. Then:
\begin{enumerate}[(1)]
\setcounter{enumi}{2}
\item $\tilde{Z}$ is a strong comonoidal action of $G$ on $\C_{Z_1}$, hence $G$ acts on $\Z_{\C_1}(\C)$;
\item  $\Z(\C)$ is equivalent to the equivariantization of $\Z_{\C_1}(\C)$ under this action of $G$ as a tensor category.
\end{enumerate}

\end{thm}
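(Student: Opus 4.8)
The plan is to prove the four assertions in order, the last two following almost formally from the fundamental theorem of Hopf polyads once the first two are in place.

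\emph{Assertion (1).} First I would check that $Z$ is a comonoidal polyad: the associativity and unity of $(\mu,\eta)$ and the comonoidality of $\mu_{g,h}$, $\eta$, $Z^2_g$, $Z^0_g$ all reduce, via the universal property of the coends, to identities on the universal dinatural transformations $j^g_{X,Y}$, which unwind directly from $\C_g\otimes\C_h\subset\C_{gh}$ and $\rdual{(Y_1\otimes Y_2)}=\rdual{Y_2}\otimes\rdual{Y_1}$. Since $\C$ is abelian it is an \re-category, and local boundedness guarantees the coends exist; as each $Z_g$ is a coend of the exact (hence \re-exact) functors $Y\otimes?\otimes\rdual{Y}$, it is itself \re-exact, so $Z$ is an \re-exact polyad. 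The substantial point is the Hopf property: I would exhibit inverses of the fusion operators $H^l_{g,h}$ and $H^r_{g,h}$. This is the $G$-graded analogue of the classical fact that the central coend monad $\int^{Y}Y\otimes?\otimes\rdual{Y}$ of a rigid category is a Hopf monad (\cite{BV2,BV3}); the inverse of $H^l_{g,h}(X,Y)$ is built from the coevaluations $h_Y$ and the universal property of the coend $Z_gX\otimes Z_{gh}Y$, and the verification is a rigidity computation.

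\emph{Assertion (2).} Here I would use the universal property of the coends together with rigidity. A $Z_1$-action $\rho\co Z_1X\to X$ is the same as a dinatural family $Y\otimes X\otimes\rdual{Y}\to X$ ($Y\in\C_1$), which under the duality $(Y,\rdual{Y})$ is the same as a natural transformation $\sigma_Y\co Y\otimes X\to X\otimes Y$; the two module axioms translate precisely into the axioms of a half-braiding relative to the inclusion $\C_1\hookrightarrow\C$, and the monoidal structure induced by $Z^2_1,Z^0_1$ matches the tensor product of $\Z_{\C_1}(\C)$, giving a monoidal isomorphism $\C_{Z_1}\iso\Z_{\C_1}(\C)$. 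For $\Mod(Z)$ the same dictionary produces, from a $Z$-module $(X,(\rho_g)_{g})$, a half-braiding $\sigma^g$ on each homogeneous subcategory $\C_g$; the compatibility of $\rho$ with $\mu$ and $\eta$ makes these compatible with $\otimes$ and with $\un$, and the grading axioms (every object is a finite sum of homogeneous ones) let me assemble the $\sigma^g$ into a single half-braiding on all of $\C$, i.e.\ an object of $\Z(\C)$. This is an isomorphism of monoidal categories $\Mod(Z)\iso\Z(\C)$, recovering the description of Turaev and Virelizier.

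\emph{Transitivity and Assertions (3), (4).} Assume now $\C_g\neq0$ for every $g$. I would first check that $Z$ is transitive, i.e.\ that each coalgebra $Z_g\un$ is transitive (Proposition~\ref{prop-faith}). Choosing a nonzero $Y\in\C_g$, the relation $Z^0_g\,j^g_{\un,Y}=e_Y\neq0$ shows $Z_g\un\neq0$. For any nonzero object $A$ of a tensor category, $A\otimes?$ is exact, and it is faithful: the unit $h_A\co\un\to\rdual{A}\otimes A$ of the adjunction $A\otimes?\dashv\rdual{A}\otimes?$ is a nonzero morphism out of the simple unit, hence monic, and remains monic after $?\otimes X$ by exactness, so the adjunction unit is monic; an exact faithful functor is conservative, so $A\otimes?$ is conservative. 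Applying this to $A=Z_g\un$ gives condition~(i) of Proposition~\ref{prop-faith}, so $Z_g\un$ is transitive. Since $\ul{G}$ is a groupoid and $Z$ is an \re-exact transitive Hopf polyad, Theorem~\ref{thm-fond} shows that $\tilde{Z}$ is a strong comonoidal action-type $\ul{G}$-polyad, that is, a strong comonoidal action of $G$ on $\tilde{\C}_*=\C_{Z_1}\iso\Z_{\C_1}(\C)$; this is Assertion~(3). Finally, Proposition~\ref{prop-comopo} gives a monoidal isomorphism $\Mod(\tilde{Z})\iso\Mod(Z)$, Example~\ref{exa-equiv} identifies $\Mod(\tilde{Z})$ with the equivariantization of $\C_{Z_1}$ under this $G$-action, and Assertion~(2) gives $\Mod(Z)\iso\Z(\C)$; chaining these yields that $\Z(\C)$ is equivalent, as a tensor category, to the equivariantization of $\Z_{\C_1}(\C)$, which is Assertion~(4).

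I expect the main obstacle to be the invertibility of the fusion operators in Assertion~(1): everything downstream (transitivity, the fundamental theorem, the passage to the equivariantization) is then essentially formal, whereas this step is where the rigidity of $\C$ must be used in an essential and computational way.
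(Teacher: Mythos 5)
Your proposal follows essentially the same route as the paper: the explicit construction of the fusion-operator inverses from coevaluations via the universal property of the coends, the dictionary between dinatural families $A\otimes X\otimes\rdual{A}\to X$ and half-braidings for Assertion (2), the transitivity check via $Z^0_g j^g_{\un,Y}=e_Y\neq 0$ and conservativity of $Z_g\un\otimes ?$, and the final appeal to Theorem~\ref{thm-fond} and Proposition~\ref{prop-comopo}. The only difference is that the paper writes out the inverse $K^l_{g,h}$ and verifies both composites explicitly, whereas you defer that computation, but you correctly identify it as the one substantive step.
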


\begin{defi}The Hopf polyad $Z$ so constructed is called the \emph{centralizer of the $G$-graded tensor category $\C$}.
\end{defi}

\begin{proof}[Proof of Theorem~\ref{thm-center-graded}]
The fact that $Z$ is a comonoidal polyad can be checked easily (The proof can be adapted  from that of \cite{BV3}, Theorem 5.6.)
Moreover $Z$ is \re-exact because $\C$ is a tensor category and each $Z_g$ is right exact (as inductive limit of exact functors).

Now let us prove that $Z$ is a left Hopf polyad (the right-handed version ensues by virtue of symmetry).
We have $$Z_g(X \otimes Z_h Y) = \int^{A \in \C_g, B  \in C_h} A \otimes X \otimes B \otimes Y \otimes \rdual B \otimes \rdual{A},$$
$$Z_g X \otimes Z_k Y = \int^{A \in \C_g, C \in \C_k} A \otimes X \otimes \rdual{A} \otimes C \otimes Y \otimes \rdual{C},$$
with universal dinatural morphisms
$$I^{g,h}_{X,Y,A,B} : A \otimes X \otimes B \otimes Y \otimes \rdual{B} \otimes \rdual{A} \to Z_g(X \otimes Z_h(Y),$$
$$J^{g,k}_{X,Y,A,C} :  A \otimes X \otimes \rdual{A} \otimes C \otimes Y \otimes \rdual{C} \to  Z_g X \otimes Z_k Y$$ defined by:
$$I^{g,h}_{X,Y,A,B} = j^g_{X \otimes Z_a Y,A}(A \otimes j^h_{Y,B} \otimes \rdual{A}) \blabla{and} J^{g,k}_{X,Y,A,C}= j^g_{X,Y,A,C},$$
pictorially:
\begin{center}
\includegraphics[width=1\textwidth]{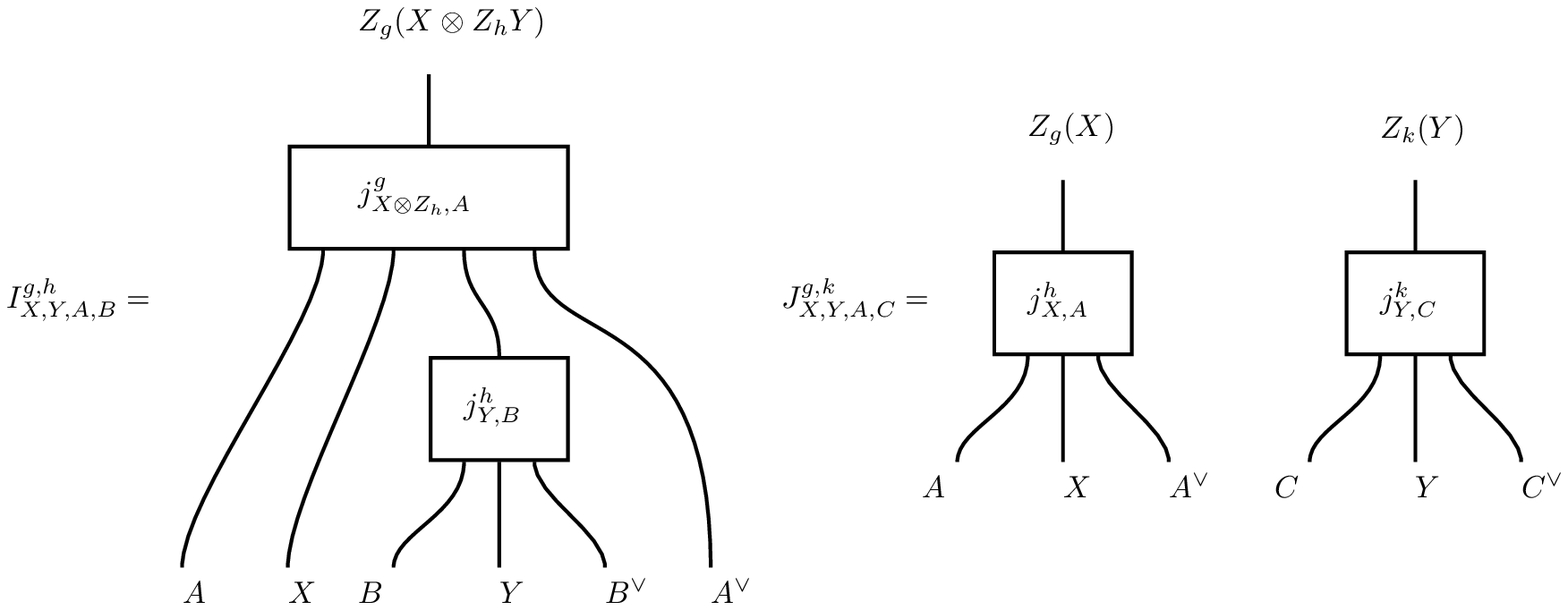}
\label{fig2}
\end{center}
The left fusion morphism $H^l_{g,h}(X,Y)$ can be described by means of $I$:
\begin{center}
\includegraphics[width=.7\textwidth]{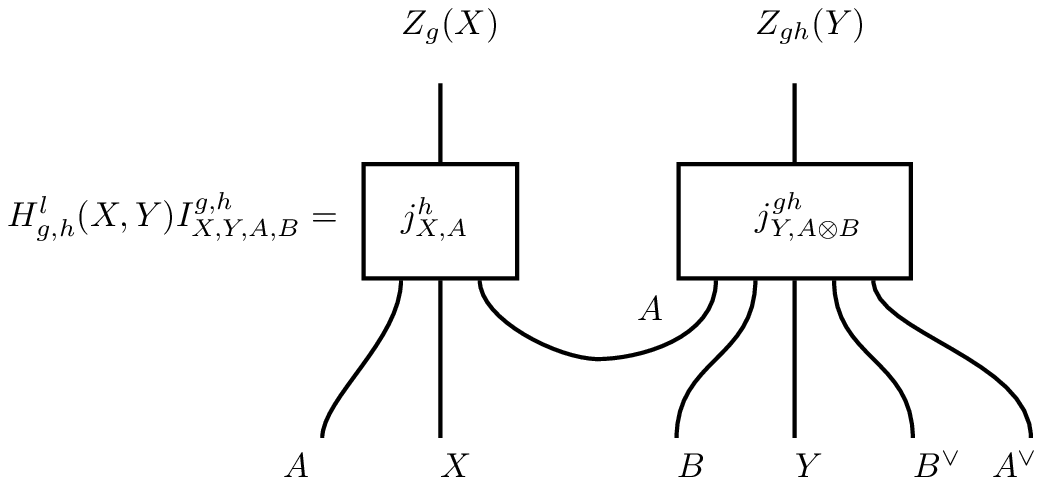}
\label{fig3}
\end{center}
that is, $H^l_{g,h}(X,Y) I^{g,h}_{X,Y,A,B}= J^{g,gh}_{X,Y,A,A \otimes B}(A \otimes X \otimes h_A \otimes B \otimes Y \otimes \rdual{B} \otimes \rdual{A})$.

We define a new morphism $K^l_{g,h}(X,Y): Z_g(X) \otimes Z_{gh}(Y) \to Z_g(X \otimes Z_hY))$ as follows:
\begin{center}
\includegraphics[width=.7\textwidth]{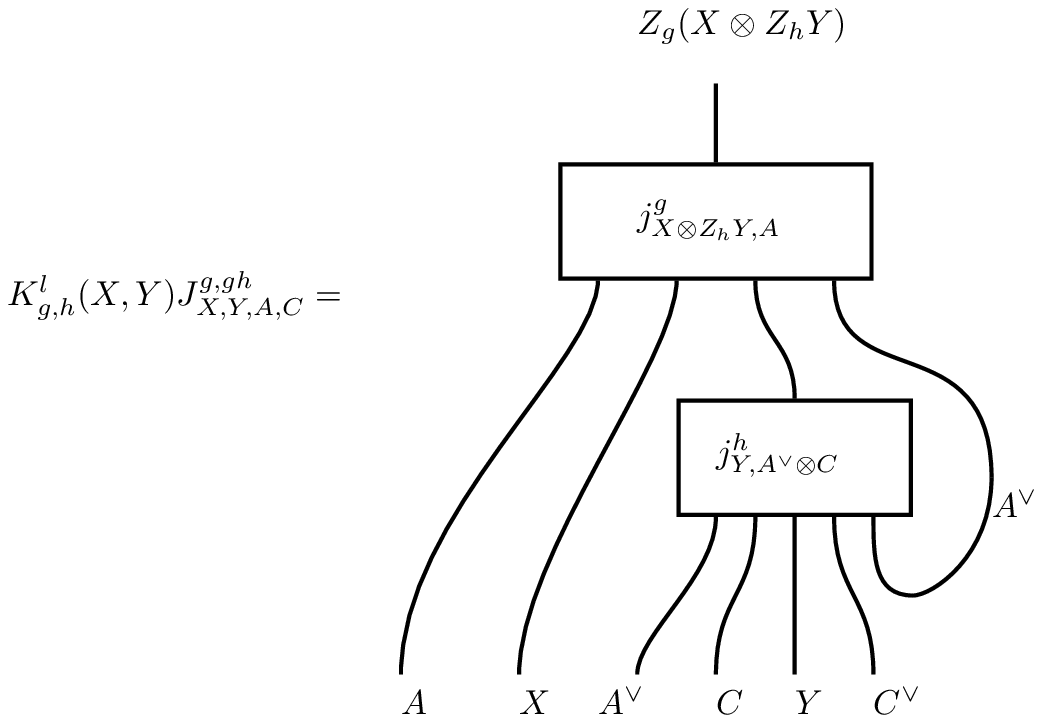}
\label{fig4}
\end{center}
that is, $K^l_{g,h}(X,Y) J^{g,gh}_{X,Y,A,C}=I^{g,h}_{X,Y,A,\rdual{A}  \otimes C}(A \otimes X \otimes \rdual{A} \otimes C \otimes Y \otimes \rdual{C} \otimes h_\rdual{A})$;
and we now verify that it is the inverse of the left fusion operator:
\begin{align}
K^l_{g,h}&(X,Y)H^l_{g,h}(X,Y) I^{g,h}_{X,Y,A,B} \cr
&= K^l_{g,h}(X,Y)J^{g,gh}_{X,Y,A,A \otimes B}(A \otimes X \otimes h_A \otimes B \otimes Y \otimes \rdual{B} \otimes \rdual{A}) \cr
&=I^{g,h}_{X,Y,A,\rdual{A} \otimes A \otimes B}(A \otimes X \otimes h_A \otimes B \otimes Y \otimes \rdual{B} \otimes  \rdual{A} \otimes h_\rdual{A}) = I^{g,h}_{X,Y,A,B},\nonumber
\end{align}
so that $K^l_{g,h}(X,Y)$ is left inverse to $H^l_{g,h}(X,Y)$, and similarly:
\begin{align}
H^l_{g,h}&(X,Y) K^l_{g,h}(X,Y) J^{g,gh}_{X,Y,A,C} \cr
&= H^l_{g,h}(X,Y) I^{g,h}_{X,Y,A,\rdual{A}  \otimes C}(A \otimes X \otimes \rdual{A} \otimes C \otimes Y \otimes \rdual{C} \otimes h_\rdual{A})\cr
&=J^{g,gh}_{X,Y,A,A\otimes \rdual{A} \otimes C}
(A \otimes X \otimes h_A \otimes A \otimes C \otimes Y \otimes \rdual{C} \otimes h_{\rdual{A}}) = J^{g,gh}_{X,Y,A,C},
\nonumber\end{align}
which shows that $K^l_{g,h}(X,Y)$ is right inverse to $H^l_{g,h}(X,Y)$.
This concludes the proof of Assertion (1).

Let us show Assertion (2). Let $X$ be an object of $\C$. A natural transformation $\sigma :  X \otimes \id_\C \to \id_\C \otimes X$ can be viewed (by duality) as a dinatural transformation
$r_a : A \otimes X \otimes \rdual{A} \to X$, with $A$ in $\C$, or, equivalently, as a collection $(r_g)_{g \in G}$ of dinatural transformations $r^g_A : A \otimes X \otimes \rdual{A} \to X$, with $A$ in $\C_g$. In turn, each dinatural transformation $r_g$ translates into a morphism $\rho_g: Z_g X \to X$. Let $\rho = (\rho_g)_{g \in G}$.

One verifies easily that in this correspondance, $(X,\sigma)$ is a half-braiding of $\C$ if and only if $(X,\rho)$ is a $Z_g$-module, and so defines an isomorphism of monoidal
categories: $\Z(\C) \iso \Mod(Z)$.

Similarly a half-braiding $(X,\sigma)$ relative to the inclusion $\C_1 \subset \C$ translates into a $Z_1$-module $(X,\rho_1 : Z_1 X \to X)$, hence an isomorphism of monoidal categories
$\Z_{\C_1}(\C) \iso \C_{Z_1}$.

Assertion (3) boils down to Theorem~\ref{thm-fond} applied to $Z$.  All we have to check is that $Z$ is transitive. Now by assumption for any $g \in G$, $\C_g$ contains a non-zero object $X$, and $Z^0_g j^g_{\un,X} = e_X$ is non-zero, which implies that $Z_g(\un)$ is non-zero, and  $Z_g(\un) \otimes ?$ is conservative because it is faithful exact and $\C$ is abelian.
As a result, $\tilde{Z}$ is strong comonoidal of action type, that is, it is an action of the group $G$ on $\C_{Z_1} \simeq \Z_{\C_1}(\C)$.
By Proposition~\ref{prop-comopo}, $\Mod(\tilde{Z}) \simeq \Mod(Z)$, which means, in view of Assertion (2), that the equivariantization of $\C_{Z_1} \simeq \Z_{\C_1}(\C)$ under the group action of $G$ is nothing but $\Z(\C)$, that is, Assertion (4) holds.
\end{proof}

\begin{rem}\label{rem-ex-centralizer}
Hopf polyads can also be constructed in the following way: let $D$ be a small category, and consider a data $$(\M= ((\M_a)_{a \in D_1},\otimes = (\otimes_{a,b})_{(a,b) \in D_2},\un=(\un_i)_{i \in D_0}),$$ where $\M_a$ is a category, $\otimes_{a,b} : \M_a \times \M_b \to \M_{ab}$ is a functor, $\un_i$ is an object of $\M_i = \M_{\id_i}$, such that $\mu$ and $\un$ satisfy the expected associativity and unity condition. (A lax version involves associativity and unity constraints.)
Then the categories $\M_i$ are monoidal, and for $j\labelto{a} i$ in $D_1$, $\M_a$ is a $\M_j$-$\M_i$ bimodule category.
Under suitable hypotheses (existence of certain adjoints and coends), one defines for each morphism $j\labelto{a} i$ two functors:
$$\left\{
\begin{array}{llll}
 Z^l_a : &\M_i &\to &\M_j\\
  &X &\mapsto &\int^{A \in \M_a} [A,A \otimes X]_l
\end{array}
\right.
\left\{
\begin{array}{llll}
 Z^r_a : &\M_j &\to &\M_i\\
  &Y &\mapsto &\int^{A \in \M_a} [A,Y \otimes A]_r
\end{array}
\right.$$
where $[-,-]_l$ and $[-,-]_r$ denote left and right external cohoms.
Hence two quasitriangular Hopf polyads $Z^l$ and $Z^r$, with source $D^o$ and $D$ respectively. 
\end{rem}

\bibliographystyle{amsalpha}

\end{document}